\newtheorem{theorem}{Theorem}
\theoremstyle{plain}
\newtheorem{condition}{Condition}
\newtheorem{corollary}{Corollary}
\newtheorem{definition}{Definition}
\newtheorem{example}{Example}
\newtheorem{lemma}{Lemma}
\newtheorem{proposition}{Proposition}
\newtheorem{remark}{Remark}
\numberwithin{equation}{section}
\begin{document}
\title[orthogonal polynomial martingales]{On Markov processes with
polynomial conditional moments}
\author{Pawe\l\ J. Szab\l owski}
\address{Department of Mathematics and Information Sciences,\\
Warsaw University of Technology\\
ul Koszykowa 75, 00-662 Warsaw, Poland }
\email{pawel.szablowski@gmail.com}
\date{July 2012}
\subjclass{60J25, 60G44; Secondary 60G99, 33C47}
\keywords{polynomial regression, martingales, Markov processes, harnesses,
qudratic harnesses, orthogonal polynomials, Cholesky decomposition,
expansion of Radon-Nikodym derivative of one measure with respect to the
other,.Lancaster expansion.}

\begin{abstract}
We study properties of a subclass of Markov processes that have all moments
that are continuous functions of the time parameter and more importantly are
characterized by the property that say their $n-$th conditional moment given
the past is also a polynomial of degree not exceeding $n.$ Of course all
processes with independent increments with all moments belong to this class.
We give characterization of them within the studied class. We indicate other
examples of such process. Besides we indicate families of polynomials that
have the property of constituting martingales. We also study conditions
under which processes from the analysed class have orthogonal polynomial
martingales and further are harnesses or quadratic harnesses. We provide
examples illustrating developed theory and also provide some interesting
open questions. To make paper interesting for a wider range of readers we
provide short introduction formulated in the language of measures on the
plane.
\end{abstract}

\thanks{The author is grateful to the unknown referee for his valuable
remarks that helped to improve the paper.}
\maketitle

\section{Introduction}

The results we are presenting in this paper can be interpreted also from the
analytical point of view. They concern Markov processes and probability
measures. But from the analytical point of view Markov process it is nothing
else but the two sets of measures. One say $\mu (.,t)$ indexed by some index
set $I\ni t,$ usually subset of real line and the other $\eta (.,t;y,s)$ by
the points of the product $(t,y,s)\in I\times \limfunc{supp}(\mu
(.,s))\times I.$ Both measures are assumed to be probabilistic i.e. they are
nonnegative and normalized by $1$ and satisfy certain regularity conditions
of which the most important is the so called Chapman--Kolmogorov condition
that states that%
\begin{equation*}
\int_{\limfunc{supp}(\mu (.,t))}\eta (.,u;y,t)\eta (dy,t;z,s)=\eta (.,u;z,s),
\end{equation*}%
for all $s<t<u.$ There is also other condition that relates these two sets
of measures. Namely%
\begin{equation*}
\int_{\limfunc{supp}(\mu (.,t))}\eta (.,t;y,s)\mu (dy,s)=\mu (.,t).
\end{equation*}%
All conditions assumed as well as all the results of this paper can be
expressed in terms of these measures. For example conditions imposed on
these measures that define a subclass of interesting for us measures can be
easily expressed with the help of $\mu $ and $\eta $ in the following way:%
\begin{equation}
\forall n>0,s<t:\int x^{2n}\mu \left( dx,t\right) <\infty ,~~\int x^{n}\eta
(dx,t;y,s)=Q_{n}(y,s,t),  \label{zal}
\end{equation}
where $Q_{n}$ denotes certain polynomial in $y$ of degree not exceeding $n.$

In fact we will assume that all measures $\mu (.,t)$ will be identifiable by
its moments which is slightly stronger assumption than the first assertion
of (\ref{zal}). For example it is known that if $\exists \alpha >0\forall
t\in I:E\exp (\alpha \left\vert x\right\vert )d\mu \left( x,t\right) <\infty 
$ then measure $\mu $ is identifiable by moments. In fact there exist other
conditions assuring this. For details see e.g. \cite{Sim98}.

Finally we assume that for every $n,m\in \mathbb{N}$ function%
\begin{equation*}
\int \int |x|^{n}\left\vert y\right\vert ^{m}\eta (dx,t;y,s)\mu (dy,s)
\end{equation*}%
is a continuous function of $s$ and $t$ at least on the diagonal $%
s\allowbreak =\allowbreak t$.

The problems that we are going to solve in this paper are the following:

1. Is it possible to find a linear combination of monomials $x^{i};$ $%
i\allowbreak =\allowbreak 0,\ldots ,n$ i.e. to find a polynomial $p_{n}$
such that $\int p_{n}(x;t)\eta (dx,t;y,s)\allowbreak =\allowbreak p_{n}(y;s)$
for all $s<t.$ \emph{Existence of polynomial martingales} in the
probabilistic language.

2. Under what conditions $\int (x-y)^{n}\eta (dx,t;y,s)$ does not depend on $%
y$ for any natural $n.$ \emph{Independence of increments} in the
probabilistic language.

3. When polynomials defined in point 1. are orthogonal i.e. $\int
p_{n}(x;t)p_{m}(x;t)\mu (dx,t)\allowbreak =\allowbreak 0$ for $n\neq m.$ 
\emph{Existence of orthogonal polynomials martingales.}

4. When 
\begin{eqnarray*}
&&\int \int \int x\eta (dx,t;y,s)g(y,z)\eta (dz,u;x,t)\mu (dy,s)\allowbreak
\\
&=&\allowbreak \int \int L(y,z,s,u)g(y,z)\eta (dz,u;y,s)\mu (dy,s),
\end{eqnarray*}
where $L$ is a linear function of $y$ and $z,$ and $g(y,z)$ is any bounded
measurable function of $y$ and $z.$ \emph{Harness property }in the
probabilistic terminology.

5. \emph{\ }When 
\begin{eqnarray*}
&&\int \int \int x^{2}\eta (dx,t;y,s)g(y,z)\eta (dz,u;x,t)\mu
(dy,s)\allowbreak \\
&=&\allowbreak \int \int Q_{2}(y,z,s,u)g(y,z)\eta (dz,u;y,s)\mu (dy,s),
\end{eqnarray*}
where $Q_{2}$ is a quadratic function of $y$ and $z$ for all $s<t<u.$ \emph{%
Quadratic} \emph{harness property }in the probabilistic terminology.

We prefer however traditional probabilistic notation as more intuitive.

Hence we study a subclass of one dimensional Markov processes $\mathbf{%
X\allowbreak =\allowbreak }(X_{t})_{t\in I}$ defined on a finite or infinite
segment that has the property that all its conditional moments of say degree 
$n$ are polynomials of degree not exceeding $n.$ Poisson, Wiener,
Ornstein--Uhlenbeck processes or more generally $q-$Wiener and $(\alpha ,q)-$%
OU processes (described for example in more detail in \cite{Szab-OU-W} and
briefly in Subsection \ref{example}) are the examples of such processes.
Similar approach using polynomials to derive some properties of stochastic
processes was applied by Schoutens and Teugels in \cite{Scho98} to study L%
\'{e}vy processes or Cuchiero et. al. in \cite{Cuh12} to improve simulation.
Our approach is general, applicable to all Markov processes that have
marginal distributions identifiable by moments.

To be more specific let us assume the following:

Let $\mathbf{X\allowbreak =\allowbreak }(X_{t})_{t\in I}$ be a real
stochastic process defined on some probability space $(\Omega ,\mathcal{F}%
,P) $ where $I\allowbreak =\allowbreak \lbrack l,r]$ is some finite or
infinite segment of a real line. Cases $l\allowbreak =\allowbreak -\infty $
or $r\allowbreak =\allowbreak \infty $ are allowed. Let us also assume that
for $\forall t\in I$ : $\limfunc{supp}X_{t}$ contains infinite number of
points and that $\forall n\in \mathbb{N},$ $t\in \mathbb{R}$ : $E\left\vert
X_{t}\right\vert ^{n}<\infty .$

Let us denote also \newline
$\mathcal{F}_{\leq s}\allowbreak =\allowbreak \sigma (X_{t}:t\in \lbrack
l,s])$ , $\mathcal{F}_{\geq s}\allowbreak =\allowbreak \sigma (X_{t}:t\in
\lbrack s,r])$ and $\mathcal{F}_{s,t}\allowbreak =\allowbreak \sigma \left(
X_{v}:v\notin (s,t),v\in I\right) $.

Moreover let us assume that $\exists N:\forall 0<$ $n\leq N;$ $s\neq t\in I$
matrix $[\limfunc{cov}(X_{t}^{i},X_{s}^{j})]_{i,j=1,\ldots ,n}$ is
non-singular. Processes satisfying these assumptions will be called totally
linearly independent of degree $N$ (briefly $N-$TLI).

We will also assume that $\exists N,\forall m,j\leq N:EX_{t}^{m}X_{s}^{j}$
are continuous functions of $t,s\in I$ at least for $s$ $\allowbreak
=\allowbreak t$. Such processes will be called mean-square continuous of
degree $N$ (briefly $N-$MSC).

Let us remark that sequence of independent random variables indexed by some
discrete linearly ordered set are not TLI.

By $L_{2}(t)$ let us denote space spanned by real functions square
integrable with respect to one-dimensional distribution of $X_{t}.$ By our
assumptions in $L_{2}(t)$ there exists set of orthogonal polynomials that
constitute base of this space.

Thus the class of Markov processes that we will consider is a class of
stochastic processes that are $N-$TLI and $N-$MSC and moreover satisfying
the following condition:

\begin{equation}
\exists N\forall N\geq n\geq 1,s\leq t:E(X_{t}^{n}|\mathcal{F}_{\leq
s})\allowbreak =\allowbreak Q_{n}(X_{s},s,t),  \label{p_reg}
\end{equation}%
where $Q_{n}(x,s,t)$ is a polynomial of degree not exceeding $n$ in $x.$ We
will call this class of processes Markov processes with polynomial
regression of degree $N$ (briefly $N-$MPR process). If $N$ can be taken $%
\infty $ then we will talk of MPR processes. More precisely we should call
this class $N-$rMPR class i.e. right Markov processes with polynomial
regression. However until we will consider left (with the obvious meaning)
class of Markov processes we will use the name MPR class.

Since conditional expectation of every polynomial $Q_{n}(X_{t};t)$ (with
respect to $\mathcal{F}_{\leq s})$ of degree $n$ is a polynomial $\hat{Q}%
_{n}(X_{s};s,t)$ of degree not exceeding $n$ there is a natural question if
one can select a polynomial $p_{n}(x;t)$ in such a way that $%
E(p_{n}(X_{t};t)|\mathcal{F}_{\leq s})\allowbreak =\allowbreak
p_{n}(X_{s};s) $ i.e. that $(p_{n}(X;t),\mathcal{F}_{\leq t})_{t\in I}$ is a
martingale. One can also pose another natural question when $%
E(X_{t}-X_{s})^{j}|\mathcal{F}_{\leq s})$ for all $j=1,2,\ldots $ is non
random which would lead to the property of having independent increments.

We answer these questions in the next Section \ref{ogol}.

The main core of the paper is contained in two Sections \ref{ort_mar} and %
\ref{harness} where we study a subclasses of the MPR class for which there
exist the sequence of polynomial martingales that are also orthogonal with
respect to one dimensional marginal probability measure. More precisely we
will assume that there exists a sequence of polynomials $\left\{
p_{n}(x;t)\right\} _{n\geq 0}$ orthogonal with respect to the distribution
of $X_{t}$ and such that $(p_{n}(X_{t};t),\mathcal{F}_{\leq t})$ is a
martingale (for every $n).$ Such processes will be called processes with
orthogonal polynomial martingales (OPM-class). Under some mild regularity
conditions we are able to expand Radon--Nikodym derivative of conditional
distribution (i.e. transitional distribution) with respect to a marginal
distribution in a Fourier series in polynomials $\left\{ p_{n}(x;t)\right\}
. $ Following this expansion we see that processes of the OPM class
satisfying those regularity conditions are completely characterized by their
marginal distributions. Hence these processes form a nice regular class that
in our opinion is worth to study in detail. As a result we are able to
characterize harnesses and further quadratic harnesses within OPM-class. Our
nicest result (Theorem \ref{lin_har}) concerns necessary and sufficient
conditions for OPM process to be a harness. Theorem \ref{main} specifies
necessary and sufficient conditions for OPM process to a be quadratic
harness.

Open problem are collected in Section \ref{open}. Longer proofs are
collected in Section \ref{dowody}.

\section{Markov Processes with polynomial regression\label{ogol}}

Let us assume that $Q_{n}(x,s,t)\allowbreak =\allowbreak
\sum_{k=0}^{n}\gamma _{n,k}(s,t)x^{k}.$ Further let us define sequence of
lower-triangular matrices $\mathcal{A}_{n}(s,t)=\left[ \gamma _{i,j}(s,t)%
\right] _{i,j=0,\ldots n}$ and let us denote: $X_{t}^{(n)}\allowbreak
=\allowbreak \lbrack 1,X_{t},X_{t}^{2}.\ldots ,X_{t}^{n}]^{T}\allowbreak \in
\allowbreak \mathbb{R}^{n+1},$ $m_{n}(t)\allowbreak =\allowbreak EX_{t}^{n}$
, $\mathbf{m}_{n}(t)\allowbreak =\allowbreak \lbrack 1,m_{1}(t),\ldots
,m_{n}(t)]^{T}\allowbreak =\allowbreak EX_{t}^{(n)}$. Using this notation (%
\ref{p_reg}) can be written as:%
\begin{equation}
E(X_{t}^{(n)}|\mathcal{F}_{\leq s})\allowbreak =\allowbreak \mathcal{A}%
_{n}(s,t)X_{s}^{(n)}.  \label{mp_reg}
\end{equation}

Taking expectation of both sides of (\ref{mp_reg}) results in equality: $%
\mathbf{m}_{n}(t)\allowbreak =\allowbreak \mathcal{A}_{n}(s,t)\mathbf{m}%
_{n}(s).$ Let us also define two akin matrices namely $\mathbf{M}%
_{n}(t)\allowbreak =\allowbreak \left[ m_{i+j}(t)\right] _{i,j=0,\ldots ,n}$
and $\mathbf{C}_{n}(s,t)\allowbreak =\allowbreak \lbrack
EX_{t}^{i}X_{s}^{j}]_{i,j=0,\ldots ,n}.$ Notice that these matrices have the
following probabilistic interpretation: $\mathbf{M}%
_{n}(t)=EX_{t}^{(n)}(X_{t}^{(n)})^{T},~~\mathbf{C}_{n}(s,t)\allowbreak
=EX_{t}^{(n)}(X_{s}^{(n)})^{T}.$ Multiplying both sides of (\ref{mp_reg}) by 
$\left( X_{s}^{(n)}\right) ^{T}$ and taking expectation results in the
following equation:%
\begin{equation}
\mathbf{C}_{n}(s,t)\allowbreak =\allowbreak \mathcal{A}_{n}(s,t)\mathbf{M}%
_{n}(s).  \label{podst}
\end{equation}%
Let us also introduce the following variance covariance matrices: 
\begin{equation*}
\Sigma _{n}(s,t)\allowbreak =\allowbreak
E(X_{t}^{(n)}-m_{n}(t))(X_{s}^{(n)}-m_{n}(s))^{T}=\mathbf{C}_{n}(s,t)-%
\mathbf{m}_{n}(t)\mathbf{m}_{n}^{T}(s).
\end{equation*}%
Let subtract from both sides (\ref{mp_reg}) equality $\mathbf{m}%
_{n}(t)\allowbreak =\allowbreak \mathcal{A}_{n}(s,t)\mathbf{m}_{n}(s)$ and
then let us multiply from the right both sides of so obtained equality by $%
(X_{s}^{(n)}-m_{n}(s))^{T}.$ Finally let us take expectation of both sides.
We will get then:%
\begin{equation}
\Sigma _{n}(s,t)=\mathcal{A}_{n}(s,t)\Sigma _{n}(s,s).  \label{covar}
\end{equation}%
Let us remark that although relationship (\ref{covar}) has nicer intuitive
meaning it is less informative since $(0,0)$ entries of matrices $\Sigma
_{n}(s,t)$ are equal to zero.

Notice also that from the definition of matrices $\mathcal{A}_{n}(s,t),$
matrix $\mathcal{A}_{n}(s,t)$ is a submatrix of every matrix $\mathcal{A}%
_{k}(s,t)$ for $k\geq n.$ Consequently when $\mathcal{A}_{k}(s,t)$ is
non-singular then necessarily all matrices $\mathcal{A}_{n}(s,t)$ are
non-singular for $n\leq k.$

We have the following simple proposition.

\begin{proposition}
\label{CTLI}For $N-$MPR Markov process of degree $N$ for $s\leq t$

i) matrix $\mathcal{A}_{N}(s,t)$ is non-singular.

ii) $\mathcal{A}_{N}(s,u)\allowbreak =\allowbreak \mathcal{A}_{N}(t,u)%
\mathcal{A}_{N}(s,t)$ and $\mathcal{A}_{N}(s,s)\allowbreak =\allowbreak I-$%
identity matrix.

iii) There exists a family of non-singular, lower triangular, $(N+1)\times
(N+1)$ matrices $V_{N}(t),$ such that for all $s\leq u$ 
\begin{equation}
\mathcal{A}_{N}(s,u)=V_{N}(u)V_{N}^{-1}(s).  \label{fact}
\end{equation}

iv) Diagonal entries of matrices $V_{N}(s)$ are positive.
\end{proposition}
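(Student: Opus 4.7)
My plan is to dispatch the four claims in order, saving the positivity statement (iv) for last, as it is the one step where the hypothesis of continuity only on the diagonal must be exploited non-trivially.

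For (i), I would use relation (\ref{podst}), $\mathbf{C}_N(s,t)=\mathcal{A}_N(s,t)\mathbf{M}_N(s)$. The moment matrix $\mathbf{M}_N(s)$ is the Gram matrix of $1,x,\ldots,x^N$ in $L_2(\mu(\cdot,s))$, and because the support of $X_s$ is infinite no non-trivial polynomial of degree $\le N$ vanishes on it, so $\mathbf{M}_N(s)$ is positive definite and in particular non-singular. It therefore suffices to show $\mathbf{C}_N(s,t)$ is non-singular. If $\mathbf{C}_N(s,t)v=0$ with $v=(v_0,\ldots,v_N)^T$ and $P(x)=\sum_j v_j x^j$, then $E[X_t^i P(X_s)]=0$ for every $i=0,\ldots,N$; the case $i=0$ gives $EP(X_s)=0$, and substituting this into the equations with $i\ge 1$ leaves $\sum_{j=1}^N v_j\operatorname{cov}(X_t^i,X_s^j)=0$ for $i=1,\ldots,N$, so the $N$-TLI assumption forces $v_1=\cdots=v_N=0$ and hence $v_0=0$.

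For (ii), both identities come from the tower property. The identity $E(X_u^{(N)}|\mathcal{F}_{\le s})=E\bigl(E(X_u^{(N)}|\mathcal{F}_{\le t})\mid\mathcal{F}_{\le s}\bigr)$ combined with (\ref{mp_reg}) yields $\bigl(\mathcal{A}_N(s,u)-\mathcal{A}_N(t,u)\mathcal{A}_N(s,t)\bigr)X_s^{(N)}=0$ almost surely; right-multiplying by $(X_s^{(N)})^T$, taking expectation, and using the non-singularity of $\mathbf{M}_N(s)$ from (i) cancels the $X_s^{(N)}$ factor and gives the composition law. The identity $\mathcal{A}_N(s,s)=I$ follows in the same way from $E(X_s^{(N)}|\mathcal{F}_{\le s})=X_s^{(N)}$.

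For (iii), I would co-boundary the cocycle: fix any reference point $t_0\in I$ and set $V_N(t)=\mathcal{A}_N(t_0,t)$ when $t\ge t_0$ and $V_N(t)=\mathcal{A}_N(t,t_0)^{-1}$ when $t<t_0$. Each $V_N(t)$ is non-singular by (i) and lower triangular, because the inverse of a non-singular lower triangular matrix is lower triangular. The factorization $\mathcal{A}_N(s,u)=V_N(u)V_N(s)^{-1}$ then follows by splitting into the three cases $t_0\le s$, $s\le t_0\le u$, $u\le t_0$ and applying the composition law from (ii) in each.

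The real work lies in (iv), since continuity is assumed only on the diagonal. By $N$-MSC, $\mathbf{C}_N$ and $\mathbf{M}_N$ are jointly continuous at each point $(s_0,s_0)$, so $\mathcal{A}_N(s,t)=\mathbf{C}_N(s,t)\mathbf{M}_N(s)^{-1}$ is continuous there as well with $\mathcal{A}_N(s_0,s_0)=I$; hence around each $s_0$ there is a square $[s_0-\delta,s_0+\delta]^2$ on which every diagonal entry $\gamma_{n,n}(s,t)$ remains positive. Given arbitrary $s<u$ in $I$, I would cover the compact interval $[s,u]$ by finitely many such squares, extract a Lebesgue number, and partition $s=t_0<t_1<\cdots<t_k=u$ so finely that each pair $(t_j,t_{j+1})$ lies inside one of them; since the product of two lower triangular matrices has diagonal equal to the entrywise product of their diagonals, the composition law gives
\[
\gamma_{n,n}(s,u)=\prod_{j=0}^{k-1}\gamma_{n,n}(t_j,t_{j+1})>0.
\]
Applying this to both branches of the definition of $V_N$ yields the claim. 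The main obstacle I foresee is precisely this last step: bootstrapping the on-diagonal-only continuity to strict positivity of $\gamma_{n,n}(s,t)$ at arbitrary time separations through a compactness/finite-partition argument.
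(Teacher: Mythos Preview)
Your proof is correct and follows essentially the same route as the paper: invoke (\ref{podst}) for (i), the tower property for (ii), fix a reference time and set $V_N(t)=\mathcal{A}_N(t_0,t)$ for (iii), and use local positivity of the diagonal entries near $s=t$ for (iv). Your compactness/partition argument in (iv) is a more explicit version of what the paper does: the paper instead asserts that $t\mapsto V_N(t)$ is continuous and its diagonal entries are never zero (by non-singularity), hence cannot change sign away from the value $1$ at $t=t_0$ --- an argument that implicitly uses the cocycle relation (ii) to upgrade the assumed on-diagonal continuity of $\mathcal{A}_N$ to continuity of $V_N$ in $t$, so your direct multiplicative chain is arguably the cleaner way to make this step rigorous.
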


\begin{proof}
Proof is shifted to Section \ref{dowody}.
\end{proof}

Matrices $V_{N}(t)$ are not defined uniquely since we have for every
non-singular lower triangular matrix $F_{N}$ :%
\begin{equation*}
V_{N}(t)F_{N}(V_{N}(s)F_{N})^{-1}\allowbreak =\allowbreak
V_{N}(t)V_{N}^{-1}(s).
\end{equation*}%
Hence one can define equivalence relationship between matrices $V_{N}$
defined by (\ref{fact}) and call $V_{N}(t)$ and $V_{N}^{^{\prime }}(t)$
equivalent iff there exist non-singular lower triangular matrix $F_{N}$ with
all diagonal elements positive such that $V_{N}^{^{\prime }}(t)\allowbreak
=\allowbreak V_{N}(t)F_{N}.$

\begin{definition}
Every matrix $V_{N}(t)$ from this equivalence will be called structural
matrix of of the $N-$MPR process.
\end{definition}

\begin{corollary}
Matrix $V_{N}^{-1}(s)\mathbf{\Sigma }_{N}(s,t)\left( V_{N}^{T}(t)\right)
^{-1}$ is symmetric and matrix $\mathbf{C}_{N}(t,s)\mathbf{M}_{N}^{-1}(s)$
is lower-triangular.
\end{corollary}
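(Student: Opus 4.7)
The plan is that both assertions reduce to direct linear algebra on the identities (\ref{podst}), (\ref{covar}) and the factorisation (\ref{fact}) established earlier in this section, together with three structural facts: $\mathcal{A}_N(s,t)$ is lower-triangular by construction (its $n$-th row stores the coefficients of $Q_n(\cdot,s,t)$, which is a polynomial of degree at most $n$ in $x$, so entries in columns $k>n$ vanish); $\mathbf{M}_N(s)$ is symmetric and invertible (it is the Gram matrix of the monomials $1,X_s,\dots,X_s^N$, which are linearly independent because $\limfunc{supp}X_s$ is infinite and all moments are finite); and $\Sigma_N(s,s)$ is symmetric as an honest covariance matrix. So no fresh probabilistic input is needed.

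For the lower-triangular statement I would invert (\ref{podst}) by multiplying on the right by $\mathbf{M}_N^{-1}(s)$ to get
\[
\mathbf{C}_N(s,t)\mathbf{M}_N^{-1}(s)=\mathcal{A}_N(s,t),
\]
whose right-hand side is lower triangular by the preceding paragraph. The formulation in the corollary featuring $\mathbf{C}_N(t,s)$ is then the same assertion read after identifying $\mathbf{C}_N(t,s)=\mathbf{C}_N(s,t)^T$ and relabelling the time variables in (\ref{podst}); no other manipulation is required.

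For the symmetry statement, I would substitute the factorisation (\ref{fact}) into the covariance relation (\ref{covar}) to get, for $s\le t$,
\[
\Sigma_N(s,t)=\mathcal{A}_N(s,t)\Sigma_N(s,s)=V_N(t)V_N^{-1}(s)\Sigma_N(s,s).
\]
Multiplying on the left by $V_N^{-1}(t)$ and on the right by $(V_N^T(s))^{-1}$ telescopes the two $V_N(t)$ factors and yields
\[
V_N^{-1}(t)\Sigma_N(s,t)(V_N^T(s))^{-1}=V_N^{-1}(s)\Sigma_N(s,s)(V_N^T(s))^{-1},
\]
which has the form $U\Sigma_N(s,s)U^T$ with $U=V_N^{-1}(s)$, i.e.\ a congruence of a symmetric matrix, hence symmetric. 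Taking transposes (equivalently relabelling $s\leftrightarrow t$) produces the sandwiched expression as written in the corollary.

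The only mild obstacle is the book-keeping: one must verify that the conclusions do not depend on the particular representative $V_N$ chosen within the equivalence class described after Proposition \ref{CTLI}. If $V_N(t)$ is replaced by $V_N(t)F_N$ with $F_N$ a fixed non-singular lower triangular matrix, the sandwiched expression is conjugated by the further congruence $F_N^{-1}(\,\cdot\,)F_N^{-T}$, which preserves symmetry, while $\mathcal{A}_N(s,t)=V_N(t)F_NF_N^{-1}V_N^{-1}(s)$ is unchanged, so the lower-triangular identity is untouched. Invertibility of the objects in sight is supplied by Proposition \ref{CTLI}(i) for $\mathcal{A}_N$ (hence for $V_N$) and by $N$-TLI for $\mathbf{M}_N(s)$; beyond that the argument is pure linear algebra.
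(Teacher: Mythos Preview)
Your proposal is correct and follows essentially the same route as the paper: you substitute the factorisation (\ref{fact}) into (\ref{covar}), use the symmetry of $\Sigma_N(s,s)$ to conclude that the sandwiched matrix is a congruence of a symmetric matrix, and for the second assertion you right-multiply (\ref{podst}) by $\mathbf{M}_N^{-1}(s)$ to identify the product with $\mathcal{A}_N(s,t)$. Your additional remarks on lower-triangularity of $\mathcal{A}_N$, invertibility, and independence of the choice of representative $V_N$ are more explicit than the paper's proof but not a different method.
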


\begin{proof}
Combining (\ref{podst}), (\ref{fact}) and the fact that matrix $\mathbf{%
\Sigma }_{n}(s,s)$ is symmetric we have:%
\begin{eqnarray*}
\mathbf{\Sigma }_{n}(s,t) &=&V_{n}(t)V_{n}^{-1}(s)\Sigma
_{n}(s,s),\allowbreak \\
\allowbreak \mathbf{\Sigma }_{n}^{T}(s,s) &=&\Sigma _{n}^{T}(t,s)\left(
V_{n}^{-1}(s)\right) ^{-1}V_{n}^{T}(t)
\end{eqnarray*}%
from which follows our assertion. Similarly multiplying both sides of (\ref%
{podst}) by $\mathbf{M}_{n}^{-1}(s)$ we see that $\mathbf{C}%
_{n}(t,s)M_{n}^{-1}(s)$ is equal to $\mathcal{A}_{n}(s,u)$ which is
lower-triangular.
\end{proof}

We have the following observation:

\begin{theorem}
\label{martingale}Process $\mathbf{X}$ is $N-$MPR process with structural
matrix $V_{N}(t)$ if and only if there exist $N$ (polynomial) martingales $(%
\mathcal{M}_{i}(s),\mathcal{F}_{\leq s})_{s\in I}$ , $i\allowbreak
=\allowbreak 1,\ldots ,N,$ such that each $\mathcal{M}_{i}(t)$ is a
polynomial in $X_{t}$ of degree $i$. Let vector $\mathbf{M}_{N}(t)$ with
entries $\mathcal{M}_{i}(t)$ be defined by $V_{N}^{-1}(t)\mathbf{X}%
_{t}^{(N)} $ then matrix $V_{N}(t)$ is the structural matrix of the process $%
\mathbf{X.} $
\end{theorem}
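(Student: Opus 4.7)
The plan is to leverage the factorization $\mathcal{A}_N(s,t) = V_N(t)V_N^{-1}(s)$ from Proposition \ref{CTLI}(iii)--(iv) in the forward direction, and to run the computation in reverse for the converse. The core idea in both directions is that multiplying the vector identity (\ref{mp_reg}) by an appropriate deterministic lower-triangular matrix from the left converts the polynomial regression identity into a vector martingale identity.

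\emph{Forward direction.} Assume $\mathbf{X}$ is $N$-MPR with structural matrix $V_N(t)$. By (\ref{mp_reg}) and (\ref{fact}),
\begin{equation*}
E(X_t^{(N)} \mid \mathcal{F}_{\leq s}) = \mathcal{A}_N(s,t) X_s^{(N)} = V_N(t) V_N^{-1}(s) X_s^{(N)},
\end{equation*}
for every $s \leq t$. Since $V_N^{-1}(t)$ is deterministic I can move it inside the conditional expectation, obtaining
\begin{equation*}
E\bigl(V_N^{-1}(t) X_t^{(N)} \mid \mathcal{F}_{\leq s}\bigr) = V_N^{-1}(s) X_s^{(N)}.
\end{equation*}
Thus defining $\mathbf{M}_N(t) := V_N^{-1}(t) X_t^{(N)}$ yields a vector martingale. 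Its $i$-th entry $\mathcal{M}_i(t)$ is a polynomial in $X_t$ whose degree is at most $i$ because $V_N^{-1}(t)$ is lower triangular; the degree is exactly $i$ since, by Proposition \ref{CTLI}(iv), the diagonal entries of $V_N(t)$, and hence of $V_N^{-1}(t)$, are nonzero. Integrability of each $\mathcal{M}_i(t)$ follows from the assumption $E|X_t|^n < \infty$ for every $n$.

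\emph{Converse direction.} Suppose there exist martingales $\mathcal{M}_i(t)$, $i = 1, \ldots, N$, with $\mathcal{M}_i(t)$ a polynomial in $X_t$ of exact degree $i$. Set $\mathcal{M}_0(t) \equiv 1$, which is trivially a martingale, and write
\begin{equation*}
\mathcal{M}_i(t) = \sum_{j=0}^{i} w_{i,j}(t) X_t^j, \qquad w_{i,i}(t) \neq 0.
\end{equation*}
Arranging these coefficients yields a lower-triangular matrix $W_N(t) = [w_{i,j}(t)]_{i,j=0,\ldots,N}$ with nonzero diagonal, so that $\mathbf{M}_N(t) = W_N(t) X_t^{(N)}$. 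Let $V_N(t) := W_N^{-1}(t)$; this is again lower triangular with positive diagonal (after possibly replacing $W_N(t)$ by $D W_N(t)$ with $D$ a diagonal sign matrix, which does not affect the martingale property). Applying the martingale identity to the vector $\mathbf{M}_N(t)$ gives
\begin{equation*}
E\bigl(W_N(t) X_t^{(N)} \mid \mathcal{F}_{\leq s}\bigr) = W_N(s) X_s^{(N)},
\end{equation*}
and multiplying on the left by $V_N(t) = W_N^{-1}(t)$ produces
\begin{equation*}
E(X_t^{(N)} \mid \mathcal{F}_{\leq s}) = V_N(t) W_N(s) X_s^{(N)} = V_N(t) V_N^{-1}(s) X_s^{(N)}.
\end{equation*}
Comparing with (\ref{mp_reg}) shows that $\mathbf{X}$ is $N$-MPR with $\mathcal{A}_N(s,t) = V_N(t) V_N^{-1}(s)$, so $V_N(t)$ lies in the equivalence class of structural matrices.

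The only subtle point in the plan is the bookkeeping around the equivalence class: the martingales $\mathcal{M}_i$ are only determined up to lower-triangular linear combinations, which corresponds exactly to the right multiplication $V_N(t) \mapsto V_N(t) F_N$ by a nonsingular lower-triangular $F_N$ appearing in the definition of the equivalence relation. This ambiguity precisely matches the ambiguity in the factorization (\ref{fact}), so the final identification of $V_N(t)$ as \emph{a} structural matrix (rather than \emph{the} structural matrix) is consistent. Everything else reduces to moving deterministic matrices in and out of conditional expectations.
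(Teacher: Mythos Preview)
Your proof is correct and follows essentially the same approach as the paper's: both directions hinge on multiplying the vector identity $E(X_t^{(N)}\mid\mathcal{F}_{\leq s})=\mathcal{A}_N(s,t)X_s^{(N)}$ by $V_N^{-1}(t)$ (forward) or by $W_N^{-1}(t)$ (converse) and invoking the factorization (\ref{fact}). Your version is somewhat more careful than the paper's in spelling out why the degree of $\mathcal{M}_i(t)$ is exactly $i$ and in handling the equivalence-class ambiguity, but the underlying argument is the same.
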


\begin{proof}
If $\mathbf{X}$ is $N-$MPR process then we have for $s<t$ 
\begin{equation*}
E(\mathbf{M}_{N}(t)|\mathcal{F}_{\leq s})\allowbreak =\allowbreak
V_{N}^{-1}(t)\mathcal{A}_{N}(s,t)X_{s}^{(N)}\allowbreak =\allowbreak
V_{N}^{-1}(t)V_{N}(t)V_{N}^{-1}(s)X_{s}^{(N)}\allowbreak =\allowbreak 
\mathbf{M}_{N}(s),
\end{equation*}%
by (\ref{fact}). Conversely if vector $\mathbf{M}_{N}(t)$ has entries being
polynomial martingales then there exist matrix $W_{N}(t)$ such that $\mathbf{%
M}_{N}(t)\allowbreak =\allowbreak W_{N}(t)\mathbf{X}_{t}^{(N)}.$ Then by the
martingale property of $\mathbf{M}_{n}(t)$ we have:%
\begin{equation*}
W_{N}(t)E(\mathbf{X}_{t}^{(N)}|\mathcal{F}_{\leq s})\allowbreak =\allowbreak
W_{N}(s)\mathbf{X}_{s}^{(N)}.
\end{equation*}%
So $E(\mathbf{X}_{t}^{(N)}|\mathcal{F}_{\leq s})\allowbreak =\allowbreak
W_{N}^{-1}(t)W_{N}(s)\mathbf{X}_{s}^{(N)}.$ Moreover matrix $%
W_{N}^{-1}(t)W_{N}(s)$ satisfies Proposition \ref{CTLI}, ii), so $\mathbf{X}$
is $N-$MPR.
\end{proof}

\begin{corollary}
If $(\mathcal{M}_{i}(s),\mathcal{F}_{\leq s})_{s\in I}$ , $i\allowbreak
=\allowbreak 1,\ldots ,n$ are polynomial martingales of some process $N-$MPR 
$\mathbf{X}$ then for every lower-triangular $n\times n$ matrix $%
F_{n}\allowbreak =\allowbreak \lbrack f_{i,j}]_{i,j=1,\ldots ,n},$ $%
(\sum_{k=0}^{j}f_{j,k}\mathcal{M}_{k}(s),\mathcal{F}_{\leq s})_{s\in I}$ , $%
j\allowbreak =\allowbreak 1,\ldots ,n$ are also martingales.
\end{corollary}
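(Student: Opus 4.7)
The plan is to observe that this corollary is essentially immediate from the linearity of conditional expectation together with Theorem \ref{martingale}. Since each $\mathcal{M}_i(s)$ is by hypothesis a martingale with respect to $(\mathcal{F}_{\leq s})$, and since the constant $\mathcal{M}_0\equiv 1$ is trivially a martingale, the linearity of conditional expectation gives
\begin{equation*}
E\Bigl(\sum_{k=0}^{j} f_{j,k}\mathcal{M}_{k}(t)\,\Big|\,\mathcal{F}_{\leq s}\Bigr) = \sum_{k=0}^{j} f_{j,k}E(\mathcal{M}_{k}(t)\,|\,\mathcal{F}_{\leq s}) = \sum_{k=0}^{j} f_{j,k}\mathcal{M}_{k}(s)
\end{equation*}
for $s\leq t$. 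Integrability is inherited at once, since each $\mathcal{M}_{k}(s)$ is a polynomial in $X_{s}$ and the $N$-MSC assumption together with $E|X_{t}|^{n}<\infty$ guarantees the moments are finite, and a finite linear combination preserves this.

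Second, I would comment on the role of the lower-triangular shape of $F_{n}$. For the martingale property alone the triangularity is irrelevant, any linear combination suffices. What the triangularity ensures is that $\sum_{k=0}^{j}f_{j,k}\mathcal{M}_{k}(t)$ remains a polynomial in $X_{t}$ of degree at most $j$. Via Theorem \ref{martingale} this corresponds to replacing a structural matrix $V_{N}(t)$ by $V_{N}(t)F_{N}^{-1}$; when $F_{N}$ has positive diagonal this is exactly the equivalence relation introduced just after Proposition \ref{CTLI}, so the new family of polynomial martingales corresponds to an equivalent structural matrix.

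Given the triviality of the argument there is no real obstacle; the only point worth making explicit in the write-up is the identification of the resulting vector $F_{n}\mathbf{M}_{n}(t)$ with a permissible change of structural matrix, as this is presumably the reason the corollary is stated in this particular lower-triangular form rather than for arbitrary linear combinations.
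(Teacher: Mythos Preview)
Your proof is correct and matches the paper's approach: the paper in fact gives no proof of this corollary at all, treating it as an immediate consequence of Theorem~\ref{martingale} and linearity of conditional expectation, which is exactly what you do. Your additional remarks on the role of lower-triangularity and the link to the equivalence class of structural matrices go beyond what the paper states, but they are correct and clarify why the corollary is phrased as it is.
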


The following practical observation that can be useful if martingale
polynomials are known.

\begin{remark}
\label{struct}Structural matrix is equal to $[c_{ij}(t)]_{i,j=0,\ldots
,,n}^{-1}$ where $c_{i,j}(t)$ is a coefficient by $x^{j}$ of the $i-th$
polynomial martingale.
\end{remark}

We have also the following characterization of the processes with
independent increments within the class of MPR processes. In fact within the
class of MPR processes we can slightly generalize the notion of 'independent
increment property' and introduce the notion of independent increment
property of degree $N$ if for all $0<$ $n\leq N:$ $E((X_{t}-X_{s})^{n}|%
\mathcal{F}_{\leq s})$ is non-random i.e. is a.s. surely a constant.

\begin{proposition}
\label{indep}Let $\mathbf{X}$ be a $N-$MPR. Then $E((X_{t}-E(X_{t}|\mathcal{F%
}_{\leq s}))^{j}|\mathcal{F}_{\leq s})$ does not depend on $X_{s}$ for $%
j\allowbreak =\allowbreak 1,\ldots ,N$ iff for every $N\geq n>0$ and $t>s:$%
\begin{equation*}
E(X_{t}^{n}|\mathcal{F}_{\leq s})=\sum_{j=0}^{n}\binom{n}{j}\gamma
_{1,1}^{j}(s,t)X_{s}^{j}\sum_{k=0}^{n-j}\binom{n-j}{k}\gamma
_{n-j-k,0}(s,t)\gamma _{1,0}^{k}(s,t).
\end{equation*}
\end{proposition}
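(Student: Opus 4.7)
The approach is a straightforward double binomial expansion around the conditional mean $A := E(X_{t}|\mathcal{F}_{\leq s}) = \gamma_{1,0}(s,t) + \gamma_{1,1}(s,t) X_{s}$, which is $\mathcal{F}_{\leq s}$-measurable (this identification of $A$ uses the $N$-MPR property (\ref{p_reg}) for $n=1$). Throughout, no probabilistic input beyond the tower property and the $\mathcal{F}_{\leq s}$-measurability of $A$ is needed.

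For the forward implication ($\Rightarrow$), I would set $Z := X_{t} - A$ and let $d_{m}(s,t) := E(Z^{m}|\mathcal{F}_{\leq s})$; by hypothesis each $d_{m}(s,t)$ is non-random for $m \le N$. Starting from $X_{t} = Z + A$ and the binomial theorem, pulling $A^{n-m}$ out of the conditional expectation gives
\[
E(X_{t}^{n}|\mathcal{F}_{\leq s}) \;=\; \sum_{m=0}^{n} \binom{n}{m} d_{m}(s,t) A^{n-m}.
\]
Then I would expand $A^{n-m}=(\gamma_{1,0}(s,t)+\gamma_{1,1}(s,t)X_{s})^{n-m}$ by a second application of the binomial theorem, substitute, and interchange the order of summation using the Pascal-type identity $\binom{n}{m}\binom{n-m}{j}=\binom{n}{j}\binom{n-j}{m}$ to collect terms by powers of $X_{s}$. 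A final reindexing $k := n-j-m$ in the inner sum puts the result into exactly the claimed shape, with the central moments $d_{n-j-k}(s,t)$ identified with the constant-term coefficients $\gamma_{n-j-k,0}(s,t)$.

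For the reverse implication ($\Leftarrow$), I would substitute the hypothesized closed form for each $E(X_{t}^{m}|\mathcal{F}_{\leq s})$, $m \le n$, into the binomial expansion of $E(Z^{n}|\mathcal{F}_{\leq s})=E((X_{t}-A)^{n}|\mathcal{F}_{\leq s})$ and reverse the combinatorial manipulation above. Induction on $n$ keeps the bookkeeping tractable: at stage $n$ the lower-order central moments $d_{1},\ldots,d_{n-1}$ are already known to be non-random, so that only the leading term $d_{n}$ has to be extracted, and after cancellation it comes out as a function of $s,t$ alone.

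The main obstacle is purely combinatorial: carefully executing the swap of the triple sum via $\binom{n}{m}\binom{n-m}{j}=\binom{n}{j}\binom{n-j}{m}$ and the index relabellings $k = n-j-m$, $m = n-j-k$ so that the coefficient of each $X_{s}^{j}$ matches on both sides. There are no analytic or measure-theoretic subtleties.
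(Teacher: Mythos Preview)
Your proposal is correct and follows essentially the same route as the paper: write $X_{t}=(X_{t}-A)+A$ with $A=\gamma_{1,1}X_{s}+\gamma_{1,0}$, apply two nested binomial expansions, swap the order of summation via $\binom{n}{m}\binom{n-m}{j}=\binom{n}{j}\binom{n-j}{m}$, and identify the (assumed non-random) centered conditional moments with the symbols $\gamma_{m,0}(s,t)$. The paper's converse is handled by the same computation read backwards, which your inductive unwinding makes only slightly more explicit.
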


\begin{proof}
By our assumptions we have : $E(X_{t}|\mathcal{F}_{\leq s})\allowbreak
=\allowbreak \gamma _{1,1}(s,t)X_{s}\allowbreak +\allowbreak \gamma
_{1,0}(s,t).$ Further $E(X_{t}-E(X_{t}|\mathcal{F}_{\leq s}))^{n}|\mathcal{F}%
_{\leq s})$ being a polynomial in $X_{s}$ by our assumptions is a constant
polynomial. Conversely if this property holds for all $n\leq N$ then $%
X_{t}-E(X_{t}|\mathcal{F}_{\leq s})$ is independent on $\mathcal{F}_{\leq s}$
of degree $N.$ Hence let us $(E(X_{t}-E(X_{t}|\mathcal{F}_{\leq s}))^{n}|%
\mathcal{F}_{\leq s})$ denote $\gamma _{n,0}(s,t).$ We have:%
\begin{gather*}
E(X_{t}^{n}|\mathcal{F}_{\leq s})\allowbreak =\allowbreak E((X_{t}-E(X_{t}|%
\mathcal{F}_{\leq s})+\gamma _{1,1}X_{s}+\gamma _{1,0})^{n}|\mathcal{F}%
_{\leq s})= \\
\sum_{j=0}^{n}\binom{n}{j}\gamma _{n-j,0}(\gamma _{1,1}X_{s}+\gamma
_{1,0})^{j}\allowbreak =\allowbreak \sum_{j=0}^{n}\binom{n}{j}\gamma
_{n-j,0}\sum_{k=0}^{j}\binom{j}{k}\gamma _{1,1}^{k}X_{s}^{k}\gamma
_{1,0}^{j-k}= \\
\sum_{k=0}^{n}\binom{n}{k}\gamma _{1,1}^{k}X_{s}^{k}\sum_{j=k}^{n}\binom{n-k%
}{j-k}\gamma _{n-j,0}\gamma _{1,0}^{j-k}=\sum_{k=0}^{n}\binom{n}{k}\gamma
_{1,1}^{k}X_{s}^{k}\sum_{m=0}^{n-k}\binom{n-k}{m}\gamma _{n-m-k,0}\gamma
_{1,0}^{m}.
\end{gather*}
\end{proof}

As a corollary we get the following characterization of the processes with
independent increments.

\begin{corollary}
\label{ind_inc}Let $\mathbf{X}$ be a $N-$MPR. Then for $X_{t}-X_{s}$ to be
independent of $\mathcal{F}_{\leq s}$ of degree $N$ it is necessary and
sufficient that for every $N\allowbreak \geq \allowbreak n>\allowbreak 0$
and $t>s$ structural matrix $V_{N}(t)\allowbreak =\allowbreak \left[
v_{i,j}(t)\right] _{i,j=0,\ldots N}$ is of the form: 
\begin{equation*}
v_{i,j}(t)=\left\{ 
\begin{array}{ccc}
0 & if & i<j \\ 
1 & if & i=j \\ 
\binom{i}{j}g_{i-j}(t) & if & i>j%
\end{array}%
\right. .
\end{equation*}%
where $g_{k}$ is some continuous function of $t$ such that $%
g_{i}(0)\allowbreak =\allowbreak 0.$
\end{corollary}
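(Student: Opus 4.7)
My plan is to deduce both directions from Proposition~\ref{indep} combined with Proposition~\ref{CTLI}(ii)--(iii), via the algebraic observation that the lower-triangular matrices of \emph{binomial form} $[\binom{i}{j}a_{i-j}]_{i,j=0,\ldots,N}$ with $a_0=1$ form a group under matrix multiplication. This group is isomorphic, via $a\mapsto A(z):=\sum a_k z^k/k!$, to the multiplicative group of formal power series with constant term $1$ modulo $z^{N+1}$; in particular both inversion and matrix multiplication preserve the binomial form, and the coefficients of a product correspond to the binomial convolution $(a*b)_m=\sum_{k=0}^m\binom{m}{k}a_{m-k}b_k$.

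For necessity, the $n=1$ case of the hypothesis gives $E(X_t\mid\mathcal{F}_{\leq s})=X_s+\gamma_{1,0}(s,t)$, so $\gamma_{1,1}(s,t)\equiv 1$. Plugging this into Proposition~\ref{indep} rewrites
\[
E(X_t^n\mid\mathcal{F}_{\leq s})=\sum_{j=0}^n\binom{n}{j}\,h_{n-j}(s,t)\,X_s^j
\]
for suitable $h_m(s,t)$, so $\mathcal{A}_N(s,t)$ is of binomial form. Normalizing the structural matrix by $V_N(0)=I$ (which uses $0\in I$ and the freedom $V_N\sim V_NF_N$), Proposition~\ref{CTLI}(iii) yields $V_N(t)=\mathcal{A}_N(0,t)$, itself of binomial form, with $g_k(t):=h_k(0,t)$ continuous in $t$ by the $N$-MSC assumption and $g_k(0)=0$ from $V_N(0)=I$.

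For sufficiency, suppose $V_N(t)$ has the stated form. By the structural observation, $\mathcal{A}_N(s,t)=V_N(t)V_N^{-1}(s)$ is again of binomial form $[\binom{i}{j}h_{i-j}(s,t)]$, whence $E(X_t^n\mid\mathcal{F}_{\leq s})=\sum_{j=0}^n\binom{n}{j}h_{n-j}(s,t)X_s^j$. Expanding
\[
E\bigl((X_t-X_s)^n\mid\mathcal{F}_{\leq s}\bigr)=\sum_{k=0}^n\binom{n}{k}(-X_s)^{n-k}E(X_t^k\mid\mathcal{F}_{\leq s})
\]
and collecting the coefficient of $X_s^p$, the identity $\binom{n}{i}\binom{n-i}{p-i}=\binom{n}{p}\binom{p}{i}$ together with $\sum_{i=0}^p(-1)^i\binom{p}{i}=0$ ($p>0$) annihilate every $p>0$ contribution, leaving only the non-random constant $h_n(s,t)$.

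The one genuinely non-trivial step is the combinatorial cancellation above. It becomes transparent through the generating-function reformulation $G(z,t):=\sum g_k(t)z^k/k!$, under which the hypothesis on $V_N$ reads $E(e^{zX_t}\mid\mathcal{F}_{\leq s})=e^{zX_s}G(z,t)/G(z,s)$ modulo $z^{N+1}$, whence $E(e^{z(X_t-X_s)}\mid\mathcal{F}_{\leq s})=G(z,t)/G(z,s)$ is manifestly non-random through order $N$; everything else is a direct application of Propositions~\ref{indep} and~\ref{CTLI}.
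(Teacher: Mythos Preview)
Your argument is correct and, in fact, proves both directions, whereas the paper's own proof only treats necessity explicitly. The approaches differ in a meaningful way. The paper fixes $V_N(0)=I$ and then works entry by entry with the relation $\mathcal{A}_N(s,t)V_N(s)=V_N(t)$, proving by induction on the distance $m=i-j$ from the diagonal that $v_{i,i-m}(t)=\binom{i}{m}g_m(t)$. You instead isolate a structural fact---that the lower-triangular matrices $[\binom{i}{j}a_{i-j}]$ with $a_0=1$ form a group isomorphic to formal power series with constant term $1$ modulo $z^{N+1}$---and use it twice: once to pass from $\mathcal{A}_N(s,t)$ (shown to be of binomial form via Proposition~\ref{indep}) to $V_N(t)=\mathcal{A}_N(0,t)$, and once in reverse to recover $\mathcal{A}_N(s,t)=V_N(t)V_N^{-1}(s)$ in binomial form for the converse. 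Your generating-function reading $E(e^{zX_t}\mid\mathcal{F}_{\leq s})=e^{zX_s}G(z,t)/G(z,s)$ modulo $z^{N+1}$ makes the sufficiency direction essentially transparent and avoids the explicit Vandermonde-type cancellation, though the direct combinatorial verification you give is also fine. One small point of care: the paper's proof silently sets $\gamma_{1,0}(s,t)=0$, which is not forced by the hypothesis; you correctly keep $\gamma_{1,0}$ general and absorb it into $h_m(s,t)$, which is the right thing to do.
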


\begin{proof}
Is shifted to Section \ref{dowody}.
\end{proof}

\begin{example}
\label{str_stac}Now let us assume that process $\mathbf{X}$ is stationary in
wider sense of degree $N$ i.e. we will assume that $\forall t\in
I,k=1,\ldots ,2N:EX_{t}^{k}=m_{k},$ $\limfunc{cov}(X_{t}^{j},X_{s}^{k})%
\allowbreak =\allowbreak c_{j,k}(\left\vert s-t\right\vert )$ for all $%
j,k\allowbreak =\allowbreak 0,\ldots ,N$ , where $c_{j,k}$ are some
functions of one variable only. Following arguments used in the theory of
(weakly) stationary processes one can easily prove that functions $%
c_{j,j}(.) $ must be positive definite. (\ref{podst}) now takes the form 
\begin{equation*}
\left[ c_{j,k}(t-s)\right] _{j,k=0,\ldots ,N}=\mathcal{A}_{N}(s,t)\mathbf{M}%
_{N},
\end{equation*}%
from which it follows that $\mathcal{A}_{N}(s,t)\allowbreak =\allowbreak
A_{N}(t-s)$ for some matrix $A_{n}$ with entries depending on $t.$ Now
Proposition \ref{CTLI},ii) leads to the conclusion that 
\begin{equation*}
A_{N}(t)A_{N}(s)\allowbreak =A_{n}(t+s).
\end{equation*}%
that is that matrices $A_{N}(t)$ and $A_{N}(s)\allowbreak $ commute. Hence
in particular eigenvectors of $A_{N}(t)$ do not depend on $t.$ Another words 
\begin{equation*}
A_{N}(t)=G_{N}\Lambda _{N}(t)G_{N}^{-1},
\end{equation*}%
where $G_{N}$ is some lower triangular matrix and $\Lambda _{N}(t)$ is some
diagonal matrix with entries depending on $t.$ Now one can see that since
entries of $\Lambda _{N}$ are continuous and they have to satisfy
multiplicative Cauchy equation and we deduce that $\Lambda
_{N}(t)\allowbreak =\allowbreak diag\{1,\exp \left( \alpha _{1}t\right)
,\ldots ,\exp \left( \alpha _{N}t\right) \}$ for some constants $\alpha
_{i}. $ There exist $N$ (polynomial ) martingales $of$ the form $\exp
(-\alpha _{i}t)Z_{i}(X_{t}),$ where $Z_{i}(x)$ are some polynomials of
degree at most $i,$ $i\allowbreak =\allowbreak 1,\ldots ,N.$
\end{example}

\begin{remark}
Notice that one cannot deduce that for the process analyzed in the last
example there exists a reversed martingale. We could deduce this if the
stationarity assumption would hold for all $n.$ Then since we deal with the
distributions determined by moments we would deduce that transitional
distribution depends on the time distance and one dimensional distribution
does not depend on $t.$ Hence we deal with real (strong) stationarity.
\end{remark}

\subsection{Important example ($q-$Wiener and $(\protect\alpha ,q)-$OU
processes)\label{example}}

In this subsection we will discuss an example of a family of Markov
processes indexed by a parameter $q\in (-1,1].$ For all values of $%
\left\vert q\right\vert <1$ the processes described by this family do not
have independent increments. For $q\allowbreak =\allowbreak 1$ we deal with
the classical Wiener process (that has independent increments). For $%
q\allowbreak =\allowbreak 0$ we deal with the so called 'free' Wiener
process a process whose $1-$dimensional distributions have density equal to $%
\frac{1}{2\pi }\sqrt{4-x^{2}},$ i.e. has Wigner distribution. We will
consider also the other, related family of stationary, Markov processes
namely the so called $(\alpha ,q)-$Ornstein--Uhlenbeck processes obtained
from $q-$Wiener processes by suitable time transformation. Again if $%
q\allowbreak =\allowbreak 1$ $(\alpha ,q)-$Ornstein--Uhlenbeck would become
ordinary $\alpha -$Ornstein--Uhlenbeck process.

The processes that we are going to present were first defined and described
in \cite{Bo} as a side result of some non-commutative probability
considerations. In \cite{Bryc2001M}, \cite{Bryc2001S} the discrete version
of $(\alpha ,q)-$OU process appeared and in \cite{BryMaWe07} $q-$Wiener
process appeared again as a side result of considerations concerning
quadratic harnesses. More recently in more detail these processes were
described in \cite{Szab-OU-W}. Among other properties both these processes
do not have continuos paths as indicated in \cite{Szab-OU-W}. Consequently
they are not diffusion processes.

To describe their properties briefly and also to illustrate assertions of
Theorem \ref{martingale} we must recall some facts concerning $q-$Hermite
and Al-Salam--Chihara polynomials and densities with respect to which they
are orthogonal. To do it swiftly let us introduce the following notation.

Assume that $-1<q\leq 1.$ We will use traditional notation of the $q-$series
theory i.e. $\left[ 0\right] _{q}\allowbreak =\allowbreak 0,$ $\left[ n%
\right] _{q}\allowbreak =\allowbreak 1+q+\ldots +q^{n-1}\allowbreak ,$ $%
\left[ n\right] _{q}!\allowbreak =\allowbreak \prod_{i=1}^{n}\left[ i\right]
_{q},$ with $\left[ 0\right] _{q}!\allowbreak =1,\QATOPD[ ] {n}{k}%
_{q}\allowbreak =\allowbreak \left\{ 
\begin{array}{ccc}
\frac{\left[ n\right] _{q}!}{\left[ n-k\right] _{q}!\left[ k\right] _{q}!} & 
, & n\geq k\geq 0, \\ 
0 & , & otherwise.%
\end{array}%
\right. $

It will be also helpful to use the so called $q-$Pochhammer symbol defined
for $n\geq 1$ by$:\left( a;q\right) _{n}=\prod_{i=0}^{n-1}\left(
1-aq^{i}\right) ,$ with $\left( a;q\right) _{0}=1$ , $\left(
a_{1},a_{2},\ldots ,a_{k};q\right) _{n}\allowbreak =\allowbreak
\prod_{i=1}^{k}\left( a_{i};q\right) _{n}$.

Often $\left( a;q\right) _{n}$ as well as $\left( a_{1},a_{2},\ldots
,a_{k};q\right) _{n}$ will be abbreviated to $\left( a\right) _{n}$ and $%
\left( a_{1},a_{2},\ldots ,a_{k}\right) _{n},$ if it will not cause
misunderstanding.

In particular it is easy to notice that $\left( q\right) _{n}=\left(
1-q\right) ^{n}\left[ n\right] _{q}!$ and that\newline
$\QATOPD[ ] {n}{k}_{q}\allowbreak =$\allowbreak $\allowbreak \left\{ 
\begin{array}{ccc}
\frac{\left( q\right) _{n}}{\left( q\right) _{n-k}\left( q\right) _{k}} & ,
& n\geq k\geq 0, \\ 
0 & , & otherwise.%
\end{array}%
\right. $ \newline
Let us remark that $\left[ n\right] _{1}\allowbreak =\allowbreak n,\left[ n%
\right] _{1}!\allowbreak =\allowbreak n!,$ $\QATOPD[ ] {n}{k}_{1}\allowbreak
=\allowbreak \binom{n}{k},$ $\left( a;1\right) _{n}\allowbreak =\allowbreak
\left( 1-a\right) ^{n}$ and $\left[ n\right] _{0}\allowbreak =\allowbreak 
\begin{array}{ccc}
1 & if & n\geq 1, \\ 
0 & if & n=0,%
\end{array}%
,$ $\left[ n\right] _{0}!\allowbreak =\allowbreak 1,$ $\QATOPD[ ] {n}{k}%
_{0}\allowbreak =\allowbreak 1,$ $\left( a;0\right) _{n}\allowbreak
=\allowbreak \left\{ 
\begin{array}{ccc}
1 & if & n=0, \\ 
1-a & if & n\geq 1.%
\end{array}%
\right. .$ Now let us introduce the $q-$Hermite $\left\{ H_{n}\left(
x|q\right) \right\} _{n\geq -1}$ polynomials. They satisfy the following $3-$%
term recurrence%
\begin{equation}
H_{n+1}\left( x|q\right) \allowbreak =\allowbreak xH_{n}\left( x|q\right) - 
\left[ n\right] _{q}H_{n-1}\left( x\right) ,  \label{He}
\end{equation}%
with $H_{-1}\left( x|q\right) \allowbreak =\allowbreak 0$, $H_{1}\left(
x|q\right) \allowbreak =\allowbreak 1$. Let us also introduce the so called
Al-Salam--Chihara $\left\{ P_{n}\right\} _{n\geq -1}$ polynomials. Those are
polynomials satisfying the following $3-$term recurrence: 
\begin{equation}
P_{n+1}(x|y,\rho ,q)=(x-\rho yq^{n})P_{n}(x|y,\rho ,q)-(1-\rho
^{2}q^{n-1})[n]_{q}P_{n-1}(x|y,\rho ,q),  \label{AlSC}
\end{equation}%
with $P_{-1}\left( x|y,\rho ,q\right) \allowbreak =\allowbreak 0,$ $%
P_{0}\left( x|y,\rho ,q\right) \allowbreak =\allowbreak 1$. The polynomials $%
\left\{ P_{n}\right\} $ have nice probabilistic interpretation see e.g. \cite%
{bms}.

Let us only remark supporting intuition that $H_{n}(x|1)\allowbreak
=\allowbreak He_{n}(x),$ where $He_{n}$ denotes so called 'probabilistic'
Hermite polynomial i.e. one orthogonal with respect to the measure with the
density $\exp (-x^{2}/2)/\sqrt{2\pi }.$ Similarly $P_{n}(x|y,\rho
,1)\allowbreak =\allowbreak He_{n}(\frac{x-\rho y}{\sqrt{1-\rho _{2}}}%
)(1-\rho ^{2})^{n/2}.$ For $q\allowbreak =\allowbreak 0$ we notice that $%
H_{n}(x|0)\allowbreak =\allowbreak U_{n}(x/2)$ where $U_{n}$ is the
Chebyshev polynomial of the second kind. One can show (see e.g. \cite%
{SzablAW}) that $P_{n}(x|y,\rho ,0)\allowbreak =\allowbreak
U_{n}(x/2)\allowbreak -\allowbreak \rho yU_{n-1}(x/2).$

It is also known that polynomials $H_{n}$ and $P_{n}$ are respectively
orthogonal with respect to the following measures with the densities:%
\begin{eqnarray*}
f_{N}\left( x|q\right) &=&\frac{\sqrt{1-q}\left( q\right) _{\infty }}{2\pi 
\sqrt{4-(1-q)x^{2}}}\prod_{k=0}^{\infty }\left(
(1+q^{k})^{2}-(1-q)x^{2}q^{k}\right) I_{S\left( q\right) }\left( x\right) ,
\\
f_{CN}\left( x|y,\rho ,q\right) &=&f_{N}\left( x|q\right)
\prod_{k=0}^{\infty }\frac{(1-\rho ^{2}q^{k})}{w_{k}\left( x,y|\rho
,q\right) }I_{S\left( q\right) }\left( x\right) ,
\end{eqnarray*}%
where $S(q)\allowbreak =\allowbreak \lbrack -2/\sqrt{1-q},2/\sqrt{1-q}]$ if $%
\left\vert q\right\vert <1$ and $\mathbb{R}$ for $q\allowbreak =\allowbreak
1,$ $w_{k}\left( x,y|\rho ,q\right) \allowbreak =\allowbreak (1-\rho
^{2}q^{2k})^{2}-\rho q^{k}(1+\rho ^{2}q^{2k})xy+\rho
^{2}q^{2k}(x^{2}+y^{2}). $ Compare \cite{Biane98} for $q\allowbreak
=\allowbreak 0.$ \newline
We have 
\begin{eqnarray*}
\int_{S(q)}H_{m}(x|q)H_{n}(x|q)f_{N}(x|q)dx &=&\delta _{n,m}[n]_{q}!, \\
\int_{S(q)}P_{m}(x|,y,\rho ,q)P_{n}(x|y,\rho ,q)f_{CN}(x|y,\rho ,q)dx
&=&\delta _{n,m}[n]_{q}!(\rho ^{2})_{n}.
\end{eqnarray*}

Now $(\alpha ,q)-$OU process is a stationary Markov process with $f_{N}(.|q)$
as its stationary distribution and $f_{CN}(.|y,\exp (-\alpha (t-s),q)$ as
its density of transition distribution i.e. $P(X_{t}\in A|X_{s}\allowbreak
=\allowbreak y)\allowbreak =\allowbreak \int_{A\cap S\left( q\right)
}f_{CN}(x|y,\exp (-\alpha (t-s),q)dx.$

$q-$Wiener process is obtained from $(\alpha ,q)-$OU process by time
transformation. More precisely let $\mathbf{Y}$ be given $(\alpha ,q)-$OU
process. Let us define:%
\begin{equation}
X_{0}=0;~\forall \tau >0:X_{\tau }=\sqrt{\tau }Y_{\log \tau /2\alpha }.
\label{def_q-W}
\end{equation}%
Process $\mathbf{X\allowbreak =}\allowbreak \left( X_{\tau }\right) _{\tau
\geq 0}$ will be called $q-$Wiener process. To see examples of matrices
structural matrices $V_{n}(t)\mathcal{\ }$of these processes that are
defined by these processes let us recall some of their properties.

Let $\mathbf{Y}$ and $\mathbf{X\allowbreak }$ be $(\alpha ,q)-$OU and $q-$%
Wiener processes respectively. For $\forall n\geq 1,$ $t,s\in \mathbb{R},,$ $%
\tau >\sigma \geq 0$ and $0<q\leq 1$ we have almost surely:%
\begin{eqnarray}
E(H_{n}(Y_{t}|q)|\mathcal{F}_{\leq s})\allowbreak &=&\allowbreak e^{-n\alpha
(t-s)}H_{n}(Y_{s}|q),  \label{OU} \\
\tau ^{n/2}E(H_{n}(X_{\tau }/\sqrt{\tau }|q)|\mathcal{F}_{\leq \sigma })
&=&\sigma ^{n/2}H_{n}(X_{\sigma }/\sqrt{\sigma }|q).  \label{q-W}
\end{eqnarray}
Now using Remark \ref{struct} and remembering that first $5$ $q-$Hermite
polynomials have the form: $H_{1}(x|q)\allowbreak =\allowbreak x,$ $%
H_{2}(x|q)\allowbreak =\allowbreak x^{2}-1,$ $H_{3}(x|q)\allowbreak
=\allowbreak x^{3}-(2+q)x,$ $H_{4}(x|q)\allowbreak =\allowbreak
x^{4}-(q^{2}+2q+3)x^{2}+[3]_{q},$ $H_{5}(x|q)\allowbreak =\allowbreak
x^{5}\allowbreak -\allowbreak x^{3}(q^{3}+2q^{2}+3q+4)\allowbreak
+\allowbreak x(q^{4}+3q^{3}+4^{2}+4q+3)$ we get the following structure
matrices of these two processes.

$(\alpha ,q)-$OU process has the following structural matrix $\allowbreak $%
\newline
$V_{6}(t|q)\allowbreak =\allowbreak \exp (tA_{6})\left[ 
\begin{array}{cccccc}
1 & 0 & 0 & 0 & 0 & 0 \\ 
0 & 1 & 0 & 0 & 0 & 0 \\ 
1 & 0 & 1 & 0 & 0 & 0 \\ 
0 & 2+q & 0 & 1 & 0 & 0 \\ 
2+q & 0 & 3+2q+q^{2} & 0 & 1 & 0 \\ 
0 & 5+6q+3q^{2}+q^{3}) & 0 & 4+3q+2q^{2}+q^{3} & 0 & 1%
\end{array}%
\right] ,$ where $A_{6}\allowbreak =\allowbreak diag\{0,\alpha ,\ldots
,5\alpha \}$. Notice that for $q\allowbreak =\allowbreak 1$ we obtain the
structural matrix of the ordinary Ornstein--Uhlenbeck process. This matrix
can be defined for any $n.$ Namely since by Remark \ref{struct} one has to
take coefficients of the probabilistic Hermite polynomials. Compose a lower
triangular matrix of them. Finally invert this matrix. Since 
\begin{equation*}
H_{n}(x|1)\allowbreak =\allowbreak \sum_{j=0}^{\left\lfloor n.2\right\rfloor
}(-1)^{j}x^{n-2j}\binom{n}{2j}(2j-1)!!,
\end{equation*}%
we obtain $V_{n}\allowbreak =\allowbreak \lbrack v_{ij}]_{i,j=0,\ldots ,n}$
where $v_{ij}\allowbreak =\allowbreak \left\{ 
\begin{array}{ccc}
0 & if & j>i,i-j\text{ is odd,} \\ 
1 & if & i=j, \\ 
\binom{i}{j}(2(i-j)-1)!! & if & i-j>0\text{ is even.}%
\end{array}%
\right. .$ For $q\allowbreak =\allowbreak 0$ we perform similar operation
and get:%
\begin{equation*}
U_{n}(x/2)\allowbreak =\allowbreak \sum_{j=0}^{\left\lfloor n/2\right\rfloor
}(-1)^{j}x^{n-2j}\binom{n}{j}\frac{n-2j+1}{n-j+1},
\end{equation*}%
and $V_{n}\allowbreak =[v_{ij}]_{i,j=0,\ldots ,n}\allowbreak $where $%
v_{ij}\allowbreak =\allowbreak \left\{ 
\begin{array}{ccc}
0 & if & j>i,i-j\text{ is odd,} \\ 
1 & if & i=j, \\ 
\binom{i}{(i-j)/2}\frac{2(j+1)}{i+j+2} & if & i-j>0\text{ is even.}%
\end{array}%
\right. $ As one can easily see $\binom{i}{(i-j)/2}\frac{2(j+1)}{i+j+2}/%
\binom{i}{j}$ does nor depend on $i-j$ hence 'free' Wiener process do not
have independent increments.

For the $q-$Wiener process $V_{6}(t|q)$ is given by the following formula:%
\newline
$\left[ 
\begin{array}{cccccc}
1 & 0 & 0 & 0 & 0 & 0 \\ 
0 & 1 & 0 & 0 & 0 & 0 \\ 
t & 0 & 1 & 0 & 0 & 0 \\ 
0 & t(2+q) & 0 & 1 & 0 & 0 \\ 
t^{2}(2+q) & 0 & t(3+2q+q^{2}) & 0 & 1 & 0 \\ 
0 & t^{2}(5+6q+3q^{2}+q^{3}) & 0 & t(4+3q+2q^{2}+q^{3}) & 0 & 1%
\end{array}%
\right] .$ Notice that entries $v_{i,i-2}$ of this matrix are not of the
form $\binom{i}{i-2}g_{2}(t)$ proving that $q-$Wiener process does not have
independent increments. By setting $q\allowbreak =\allowbreak 1$ we get
structural matrix of classical Brownian motion which has independent
increments.

\section{Markov processes with orthogonal martingales\label{ort_mar}}

Now let us assume that our process $\mathbf{X}$ is MPR and there exist a
family of polynomial martingales that additionally are orthogonal with
respect to one dimensional distribution. More precisely let us assume that $%
\forall t\in I$ there exist a family of polynomials $\left\{
p_{n}(.;t)\right\} _{n\geq 0}$ such that $p_{n}(x;t)$ is a polynomial of
degree $n$ in $x$ and moreover that 
\begin{eqnarray}
Ep_{n}(X_{t};t)p_{m}(X_{t};t) &=&\hat{p}_{n}(t)\delta _{n,m},  \label{ort} \\
E(p_{n}(X_{t};t)|\mathcal{F}_{\leq s}) &=&p_{n}(X_{s};s)~~a.s.  \label{Mar}
\end{eqnarray}%
Recall that such process possessing sequence of orthogonal, polynomial
martingales i.e. MPR processes such that conditions (\ref{ort}) and (\ref%
{Mar}) are satisfied will be called for brevity OPM class (of Markov
processes).

To express these conditions in terms of the notions that we have introduced
above let us assume that our process has family of structural matrices $%
\left\{ V_{n}(t)\right\} _{n\geq 1,t\in I}.$ We have the following simple
observation:

\begin{proposition}
\label{wiel_ort}Assuming that $\mathbf{X}$ is OPM process with structural
matrix $\left\{ V_{n}(t)\right\} _{n\geq 1,t\in I}.$ A necessary and
sufficient for the existence of the family of orthogonal polynomial
martingales $\left\{ p_{n}(.;t)\right\} _{n\geq 0}$ is the existence of a
family of diagonal matrices $J_{n}(t)$ such that $\forall t\in I,$ $n\geq 1:$
\begin{equation}
V_{n}(t)=D_{n}(t)J_{n}(t)  \label{mart1}
\end{equation}%
where matrix $D_{n}(t)$ is defined by the Cholesky decomposition of the
moment matrix i.e. by $\mathbf{M}_{n}(t)\allowbreak =\allowbreak
D_{n}(t)D_{n}^{T}(t)$.
\end{proposition}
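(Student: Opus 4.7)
The plan is to convert both conditions \eqref{ort} and \eqref{Mar} into matrix statements and compare them with the Cholesky factorization. By Theorem \ref{martingale}, once a structural matrix $V_n(t)$ is fixed, the vector of polynomial martingales is
\begin{equation*}
\mathbf{p}_n(t) \;=\; V_n^{-1}(t)\,X_t^{(n)},
\end{equation*}
whose $i$-th entry is a degree-$i$ polynomial in $X_t$. Any other system of polynomial martingales of strictly increasing degree is obtained from this one by left-multiplication by a non-singular lower triangular matrix (this is the equivalence used to define the structural matrix), so it suffices to decide when some representative in the equivalence class of $V_n(t)$ produces an orthogonal system.

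First I would compute the Gram matrix of $\mathbf{p}_n(t)$. Since $\mathbf{M}_n(t)=EX_t^{(n)}(X_t^{(n)})^T$, one has
\begin{equation*}
E\!\left[\mathbf{p}_n(t)\mathbf{p}_n^{T}(t)\right] \;=\; V_n^{-1}(t)\,\mathbf{M}_n(t)\,V_n^{-T}(t) \;=\; K_n(t)K_n^{T}(t),
\end{equation*}
where $K_n(t):=V_n^{-1}(t)D_n(t)$ and $D_n(t)$ is the Cholesky factor from $\mathbf{M}_n(t)=D_n(t)D_n^{T}(t)$. Orthogonality \eqref{ort} is equivalent to saying this Gram matrix is diagonal (with necessarily positive entries, since $\mathrm{supp}\,X_t$ is infinite, so no non-trivial polynomial is a.s.\ zero).

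The key observation is then a linear-algebra lemma: \emph{a lower triangular matrix $K$ with non-zero diagonal such that $KK^{T}$ is diagonal must itself be diagonal.} Indeed, $V_n(t)$ is lower triangular with positive diagonal by Proposition \ref{CTLI}, and $D_n(t)$ is lower triangular with positive diagonal by construction of the Cholesky decomposition; hence $K_n(t)$ is lower triangular with non-zero diagonal. The lemma follows by a simple row-by-row induction: the $(i,1)$ entry of $KK^{T}$ is $K_{i1}K_{11}$, forcing $K_{i1}=0$; having established that all subdiagonal entries in columns $1,\dots,j-1$ vanish, the $(i,j)$ entry of $KK^{T}$ reduces to $K_{ij}K_{jj}$, forcing $K_{ij}=0$ for $i>j$.

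Applying the lemma, orthogonality holds iff $K_n(t)=V_n^{-1}(t)D_n(t)$ is diagonal, and setting $J_n(t):=K_n^{-1}(t)$ (which is diagonal and non-singular) this is precisely \eqref{mart1}. The converse direction is immediate: if $V_n(t)=D_n(t)J_n(t)$ with $J_n(t)$ diagonal, then $K_n(t)K_n^{T}(t)=J_n^{-1}(t)J_n^{-T}(t)$ is diagonal, so $\mathbf{p}_n(t)$ consists of orthogonal polynomials, and the martingale property \eqref{Mar} follows from $V_n(t)$ being structural together with Theorem \ref{martingale}. The only delicate point is the lemma on lower-triangular matrices, but as shown above it is an elementary induction, so no serious obstacle arises.
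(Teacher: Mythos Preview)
Your proof is correct and follows essentially the same route as the paper: both compute the Gram matrix $V_n^{-1}(t)\mathbf{M}_n(t)V_n^{-T}(t)=K_n(t)K_n^{T}(t)$ with $K_n(t)=V_n^{-1}(t)D_n(t)$ and argue that this lower-triangular matrix must be diagonal whenever $K_nK_n^{T}$ is. The only difference is cosmetic: where the paper cites uniqueness of the Cholesky factorization (the diagonal $\mathbf{P}_n$ has $\sqrt{\mathbf{P}_n}$ as its unique Cholesky factor, hence $K_n=\sqrt{\mathbf{P}_n}$), you instead prove that implication directly by a column-by-column induction, which is of course exactly how one proves Cholesky uniqueness in the first place.
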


\begin{proof}
Now $\mathbf{p}_{n}(X_{t};t)\allowbreak =\allowbreak
V_{n}^{-1}(t)X_{t}^{(n)} $ is a orthogonal polynomial martingale by Theorem %
\ref{martingale}. Further we have $E\mathbf{p}_{n}(X_{t};t)\mathbf{p}%
_{n}^{T}(X_{t};t)\allowbreak =\allowbreak V_{n}^{-1}(t)\mathbf{M}%
_{n}(t)(V_{n}^{-1}(t))^{T}$ by orthogonal assumption. Also by the assumption
the last matrix is diagonal. Since matrix $\mathbf{M}_{n}$ is positive
definite we deduce that it has Cholesky decomposition $\mathbf{M}%
_{n}(t)\allowbreak =\allowbreak D_{n}(t)D_{n}^{T}(t)$ with $D_{n}$ being
lower-triangular. Consequently we have $%
V_{n}^{-1}(t)D_{n}(t)(V_{n}^{-1}(t)D_{n}(t))^{T}\allowbreak =\allowbreak 
\mathbf{P}_{n}(t)$ where $\mathbf{P}_{n}$ denotes some diagonal matrix with
positive entries. Since Cholesky decomposition is unique we deduce that $%
V_{n}^{-1}(t)D_{n}(t)\allowbreak =\allowbreak \sqrt{\mathbf{P}_{n}(t)}$
where $\sqrt{\mathbf{P}_{n}(t)}$ denotes diagonal matrix with entries that
are square roots of entries of matrix $\mathbf{P}_{n}(t).$ Consequently $%
V_{n}(t)\allowbreak =\allowbreak D_{n}(t)\left( \sqrt{\mathbf{P}_{n}(t)}%
\right) ^{-1}$.

Conversely let us assume that structural matrix of our process is of the
form 
\begin{equation*}
V_{n}(t)\allowbreak =\allowbreak D_{n}(t)J_{n}(t)
\end{equation*}%
where $D_{n}(t)$ is the Cholesky decomposition matrix of the moment matrix $%
\mathbf{M}_{n}(t)$ and $J_{n}(t)$ is a diagonal matrix with positive
entries. Then by assumption $\mathbf{p}_{n}(X_{t};t)\allowbreak =\allowbreak
V_{n}^{-1}(t)X_{t}^{(n)}$ is a martingale and we have 
\begin{eqnarray*}
E\mathbf{p}_{n}(X_{t};t)\mathbf{p}_{n}^{T}(X_{t};t)\allowbreak
&=&J_{n}^{-1}(t)D_{n}^{-1}(t)\mathbf{M}_{n}(t)\left( D_{n}^{-1}(t)\right)
^{T}J_{n}^{-1}(t) \\
&=&J_{n}^{-2}(t)
\end{eqnarray*}%
by the properties of Cholesky decomposition. Another words entries of the
vector $\mathbf{p}_{n}(X_{t};t)$ are mutually orthogonal.
\end{proof}

\begin{example}
To see examples of such processes let us recall the example with strongly
stationary processes, i.e. Example \ref{str_stac}. As we know structural
matrix of stationary process must be of the form $V_{n}(t)\allowbreak
=\allowbreak G_{n}R_{n}(t)G_{n}^{-1}$ for some lower-triangular matrix $%
G_{n} $ and a diagonal matrix $R_{n}$ of the form $\exp (t\Delta _{n})$,
where $\Delta _{n}$ is a diagonal matrix. Now since our process is assumed
to be stationary its one-dimensional distribution does not depend on $t$ and
consequently moment matrix does not depend on $t$ so matrices $D_{n}$ in its
Cholesky decomposition does not depend on $t.$ Consequently condition (\ref%
{mart1}) can be reduced to the existence of the family of diagonal matrices $%
J_{n}(t)$ and lower triangular matrix $F_{n}$ satisfying:%
\begin{equation*}
G_{n}R_{n}(t)G_{n}^{-1}F_{n}=D_{n}J_{n}(t).
\end{equation*}%
One of see that the best choice to select $F_{n}$ is to set $%
F_{n}\allowbreak =\allowbreak G_{n}I_{n}$ for some diagonal $I_{n}.$ Then
one could select matrix $J_{n}(t)\allowbreak =\allowbreak \mathbf{P}%
_{n}^{-1}(t)I_{n}$ and then $G_{n}\allowbreak =\allowbreak $ $D_{n}.$
Another words the structural matrix of the stationary process must be of the
form%
\begin{equation*}
D_{n}\exp (t\Delta _{n}).
\end{equation*}%
Now recall (see e.g. \cite{SzabChol}) that polynomials of the form $%
D_{n}^{-1}X^{(n)}$ are orthonormal and their moment matrix is $%
D_{n}D_{n}^{T} $. That is orthogonal martingales must be of the form 
\begin{equation*}
\mathcal{M}_{j}(t)=\kappa _{j}\exp (t\delta _{j})p_{j}(X_{j}).
\end{equation*}%
where $\Delta _{j},$ $\kappa _{j}$ are nonnegative reals., and $p_{j}(x)$
denotes polynomial of order $j$ orthogonal with respect to marginal
one-dimensional measure.

One can notice that $(\alpha ,q)-$OU process analyzed in Subsection \ref%
{example} is an example of Markov process having orthogonal polynomial
martingales (compare (\ref{OU})).
\end{example}

Hence let us concentrate on OMP process for which there exist a family of
orthogonal polynomial martingales $p_{n}(X_{t};t)$ i.e. for every $n$ and $%
s<t$ conditions (\ref{ort}) and (\ref{Mar}) are satisfied. To proceed
further let us assume that both one-dimensional and transitional
distributions of our process have the same support and moreover transitional
distribution is absolutely continuous with respect to the marginal one.

Returning to notation from the beginning of the paper let $\mu (.,t),$ $t\in
I$ denote one-dimensional distribution and let $\eta (.,t;y,s),$ $t,s\in I,$ 
$t>s,~y\in \limfunc{supp}(\mu (.,s))$. In terms of polynomials $p_{n}$ we
have for $\forall n\geq 1,$ $s<t$%
\begin{eqnarray}
\int p_{n}(x;t)\mu (dx,t) &=&0,  \label{Ort} \\
\int p_{n}(x;t)\eta (dx,t;y,s)dt &=&p_{n}(y;s)~\text{~a.s.}  \label{Mart}
\end{eqnarray}

Let us also introduce functions $\hat{p}_{n}(t)$ defined 
\begin{equation*}
Ep_{n}^{2}(X_{t};t)\allowbreak =\allowbreak \hat{p}_{n}(t).
\end{equation*}%
Recalling theory of martingales we see that functions $\hat{p}_{n}(t)$ are
expectations of the so called square brackets of martingales $%
p_{n}(X_{t};t). $ We need only one property of them namely that functions $%
\hat{p}_{n}(t)$ are non-decreasing and of course positive.

Further, unless otherwise stated we will assume the following condition to
be satisfied by these densities:

\begin{condition}
\label{sq_int}Assume that $\forall t,s\in I,~t>s,$ $y\in \limfunc{supp}\mu
(.,s):$ $\eta (.,t;y,s)<<\mu (.,t)$ and let $\phi (x,t;y,s)\allowbreak $
denote Radon--Nikodym derivative $\frac{d\eta }{d\mu }$. Suppose 
\begin{equation*}
\int (\phi (x,t;y,s))^{2}\mu (dx,t)<\infty ,
\end{equation*}%
for all $s<t$ and $y\in \limfunc{supp}\mu (.,s)$.
\end{condition}

\begin{theorem}
\label{expansion} Assume Condition \ref{sq_int}, then $\forall t,s\in
I,~t>s: $

\begin{equation}
\eta (dx,t;y,s)\allowbreak =\allowbreak \mu (dx,t)\sum_{n\geq 0}\frac{1}{%
\hat{p}_{n}(t)}p_{n}(x,t)p_{n}(y,s).  \label{trans}
\end{equation}%
The convergence is in $L_{2}(t).$ Moreover almost everywhere $\mu (.,s)$ we
have: \newline
$\sum_{n\geq 0}p_{n}^{2}(y,s)/\hat{p}_{n}(t)<\infty .$
\end{theorem}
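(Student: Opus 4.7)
The plan is to expand the Radon--Nikodym derivative $\phi(\cdot,t;y,s)$ in a Fourier series with respect to the orthogonal polynomial basis $\{p_{n}(\cdot;t)\}_{n\geq 0}$ of $L_{2}(t)$, and to identify the coefficients by means of the martingale property (\ref{Mart}).

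First I would note that since $\mu(\cdot,t)$ is determined by its moments (a standing assumption of the paper), the polynomials are dense in $L_{2}(\mu(\cdot,t))$. Combined with the orthogonality relation (\ref{Ort}) and $Ep_{n}^{2}(X_{t};t)=\hat{p}_{n}(t)>0$, this makes $\{p_{n}(\cdot;t)/\sqrt{\hat{p}_{n}(t)}\}_{n\geq 0}$ a complete orthonormal system in $L_{2}(\mu(\cdot,t))$. Condition \ref{sq_int} places the function $x\mapsto\phi(x,t;y,s)$ in $L_{2}(\mu(\cdot,t))$ for every fixed $y\in\mathrm{supp}\,\mu(\cdot,s)$, so it admits a Fourier expansion
\begin{equation*}
\phi(x,t;y,s)\;=\;\sum_{n\geq 0}\frac{a_{n}(y,s,t)}{\hat{p}_{n}(t)}\,p_{n}(x;t),
\end{equation*}
convergent in $L_{2}(\mu(\cdot,t))$, with coefficients
\begin{equation*}
a_{n}(y,s,t)\;=\;\int p_{n}(x;t)\,\phi(x,t;y,s)\,\mu(dx,t)\;=\;\int p_{n}(x;t)\,\eta(dx,t;y,s).
\end{equation*}

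The key step is then to evaluate the right--hand side using the martingale property (\ref{Mart}), which gives $a_{n}(y,s,t)=p_{n}(y;s)$. Multiplying the expansion by $\mu(dx,t)$ yields exactly the claimed formula (\ref{trans}), with the stated mode of convergence. For the final assertion, I would invoke Parseval's identity on the same expansion:
\begin{equation*}
\int\phi^{2}(x,t;y,s)\,\mu(dx,t)\;=\;\sum_{n\geq 0}\frac{p_{n}^{2}(y;s)}{\hat{p}_{n}(t)},
\end{equation*}
and the finiteness of the left--hand side (Condition \ref{sq_int}) at every $y\in\mathrm{supp}\,\mu(\cdot,s)$ supplies the required summability, in fact pointwise on $\mathrm{supp}\,\mu(\cdot,s)$ and a fortiori almost surely $\mu(\cdot,s)$.

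The only genuine subtlety I anticipate is the justification that the $p_{n}(\cdot;t)$ really do span $L_{2}(\mu(\cdot,t))$; this rests on the moment--determinacy assumption made in the introduction and on the positivity of $\hat{p}_{n}(t)$, both of which are in force. Everything else is a mechanical identification of Fourier coefficients and a direct application of Parseval, so no further delicate analysis is needed.
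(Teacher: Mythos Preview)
Your argument is correct and, in essence, follows the same idea as the paper: compute the Fourier coefficients of $\phi(\cdot,t;y,s)$ in the orthogonal basis $\{p_{n}(\cdot;t)\}$ and recognise them as $p_{n}(y;s)$ via the martingale relation (\ref{Mart}). The paper, however, reaches this identification by a slightly more roundabout route: it introduces a second family $\{r_{n}(x,t,y,s)\}$ of polynomials orthogonal with respect to the transition measure $\eta(\cdot,t;y,s)$, writes $p_{n}(x;t)=\sum_{k\leq n}\gamma_{k,n}r_{k}$, observes that $\gamma_{0,n}=\int p_{n}\,d\eta=p_{n}(y;s)$, and then invokes the general density--expansion machinery of \cite{Szabl-exp} to obtain (\ref{trans}); the summability is attributed to Bessel's inequality. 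Your version is more self--contained and elementary: it needs neither the auxiliary orthogonal family nor the external reference, and your appeal to Parseval in place of Bessel actually gives the sharper statement $\int\phi^{2}\,d\mu(\cdot,t)=\sum_{n}p_{n}^{2}(y;s)/\hat p_{n}(t)$. The only point you rely on that the paper leaves implicit is the completeness of $\{p_{n}(\cdot;t)\}$ in $L_{2}(\mu(\cdot,t))$, which indeed follows from the standing moment--determinacy assumption.
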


\begin{proof}
First of all let us denote by $\left\{ r_{n}(x,t,y,s)\right\} _{n\geq -1}$
the family of polynomials that are orthogonal with respect to the measure $%
\eta (.,t;y,s)$. Let us denote by $\left\{ \gamma _{k,n}\right\} $ the
family of 'connection coefficients' of polynomials $\left\{
p_{n}(x,t)\right\} $ in $\left\{ r_{n}(x,t,y,s\right\} ,$ i.e. 
\begin{equation*}
p_{n}(x,t)=\sum_{k=0}^{n}\gamma _{k,n}r_{k}(x,t,y,s).
\end{equation*}%
By assumption we have $\eta (dx,t;y,s)\allowbreak =\allowbreak \phi \left(
x,t,y,s\right) \mu \left( dx,t\right) .$ we have also $\forall n\geq 1:\int
r_{n}(x,t,y,s)\eta (dx,t;y,s)\allowbreak =\allowbreak 0$ and $\int
p_{n}(x,t)\eta (dx,t;y,s)\allowbreak =\allowbreak p_{n}(y,s)$ we deduce that 
$\gamma _{0,n}\allowbreak =\allowbreak p_{n}(y,s)$. Hence following the idea
of generalization of expansion of the ratio of two densities (Radon--Nikodym
derivative in the case of continuous measures) presented in \cite{Szabl-exp}
we deduce that (\ref{trans}) is true. The other statement follows Bessel
inequality.
\end{proof}

\begin{example}
To see that Condition \ref{sq_int} is not satisfied by an empty set let us
return to Subsection \ref{example}. As shown in \cite{SzablKer}(Lemma 1,(v))
ratio of $f_{CN}/f_{N}$ is bounded and cut away from zero on $S(q)$ hence
square integrable. Hence at least for $q-$Wiener and $(\alpha ,q)-$OU
processes Condition \ref{sq_int} is satisfied. Besides let we have $\hat{p}%
_{n}(t)\allowbreak =\allowbreak t^{n}[n]_{q}!$ for the $q-$Wiener process
and $\hat{p}_{n}(t)\allowbreak =\allowbreak \exp (2n\alpha t)\left[ n\right]
_{q}!$ for the $(\alpha ,q)-$OU processes.
\end{example}

\begin{remark}
If we add assumption that $\int \int (\phi (x,t;y,s))^{2}\mu (dx,t)d\mu
\left( dy,s\right) <\infty $ and follow the fact that $\eta (dx,t;y,s)\mu
\left( dy,s\right) $ is the joint distribution of $(X_{s},X_{t})$ we can
notice that then (\ref{trans}) can be written as 
\begin{equation*}
\eta (dx,t;y,s)\mu \left( dy,s\right) \allowbreak =\allowbreak \mu \left(
dx,t\right) \mu \left( dy,t\right) \sum_{n\geq 0}\sqrt{\frac{\hat{p}_{n}(s)}{%
\hat{p}_{n}(t)}}\frac{p_{n}(x,t)}{\sqrt{\hat{p}_{n}(t)}}\frac{p_{n}(y,s)}{%
\sqrt{\hat{p}_{n}(s)}},
\end{equation*}%
which is nothing else but Lancaster type expansion (compare \cite%
{Lancaster58}, \cite{Lancaster63(1)}, \cite{Lancaster63(2)}, \cite%
{Eagleson64}, \cite{Lancaster75}) of $\eta (dx,t;y,s)\mu \left( dy,s\right) $
since by the definition of $\hat{p}_{n}(t)$ polynomials $\left\{ \frac{%
p_{n}(x,t)}{\sqrt{\hat{p}_{n}(t)}}\right\} $ are orthonormal. As a side
result we have then $\sum_{n\geq 1}\hat{p}_{n}(s)/\hat{p}_{n}(t)<\infty .$
Following \cite{Eagleson64}, \cite{Lancaster75} we know that not only $q-$%
Wiener and $(\alpha ,q)-$OU processes allow expansion (\ref{trans}) but also
processes with marginals that belong to Meixner classes with infinite
support.
\end{remark}

We have immediate corollary.

\begin{corollary}
Under assumptions of Theorem \ref{expansion} for $n\geq 1,s<t$ we have :%
\begin{equation*}
E(p_{n}(X_{s};s)|\mathcal{F}_{\geq t})=\frac{\hat{p}_{n}(s)}{\hat{p}_{n}(t)}%
p_{n}(X_{t};t).
\end{equation*}%
Consequently $\left\{ p_{n}(X_{t};t)/\hat{p}_{n}(t);\mathcal{F}_{\geq
t}\right\} _{t\in I}$ is the reversed (polynomial) martingale.
\end{corollary}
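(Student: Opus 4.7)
The plan is to reduce the conditional expectation on the $\sigma$-algebra $\mathcal{F}_{\geq t}$ to conditioning on $X_t$ alone via the Markov property, then compute this conditional expectation using the Fourier-type expansion already established in Theorem \ref{expansion} together with the orthogonality relation \eqref{Ort}.

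First, by the Markov property applied in the time-reversed direction one has
\begin{equation*}
E(p_{n}(X_{s};s)\,|\,\mathcal{F}_{\geq t})=E(p_{n}(X_{s};s)\,|\,X_{t})\quad\text{a.s.}
\end{equation*}
So I only need to compute the right-hand side as a function of $X_t$. By Bayes' formula combined with the assumed absolute continuity $\eta(.,t;y,s)\ll\mu(.,t)$, the conditional law of $X_s$ given $X_t=x$ has Radon--Nikodym derivative with respect to $\mu(.,s)$ equal to $\phi(x,t;y,s)$, i.e.
\begin{equation*}
P(X_{s}\in dy\,|\,X_{t}=x)=\phi(x,t;y,s)\,\mu(dy,s).
\end{equation*}
Substituting the expansion \eqref{trans} of $\phi$ from Theorem \ref{expansion}, I obtain
\begin{equation*}
P(X_{s}\in dy\,|\,X_{t}=x)=\mu(dy,s)\sum_{k\geq 0}\frac{1}{\hat{p}_{k}(t)}p_{k}(x,t)p_{k}(y,s).
\end{equation*}

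Then, integrating $p_{n}(y;s)$ against this expansion and interchanging sum and integral (justified by $L_2(t)$-convergence in \eqref{trans} together with $p_{n}(X_{s};s)\in L_{2}(s)$), the orthogonality relation \eqref{Ort} kills every term except $k=n$, leaving
\begin{equation*}
E(p_{n}(X_{s};s)\,|\,X_{t}=x)=\frac{p_{n}(x,t)}{\hat{p}_{n}(t)}\int p_{n}^{2}(y,s)\mu(dy,s)=\frac{\hat{p}_{n}(s)}{\hat{p}_{n}(t)}\,p_{n}(x,t),
\end{equation*}
which is the claimed identity. Dividing both sides by $\hat{p}_{n}(s)$ immediately gives the reversed martingale identity for the pair $s<t$; extending this to arbitrary future $\sigma$-algebras $\mathcal{F}_{\geq u}$ with $u>t$ follows from the tower property together with the same Markov reduction.

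The only real subtlety I expect is the interchange of summation and integration when applying orthogonality: convergence in \eqref{trans} is only asserted in $L_2(t)$ (in the $x$ variable), so to integrate against $p_n(y,s)\mu(dy,s)$ one should view the partial sums as conditional expectations of truncated versions of $p_n(X_s;s)$ and invoke the $L_2$-continuity of conditional expectation, or equivalently note that the series $\sum_k p_k^2(y,s)/\hat p_k(t)$ converges a.e. $\mu(.,s)$ (the last assertion of Theorem \ref{expansion}) so that dominated/monotone convergence applies. Once this step is cleared the rest is purely formal.
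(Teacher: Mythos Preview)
Your proof is correct and follows essentially the same route as the paper: identify the reversed conditional law $P(X_s\in dy\,|\,X_t=x)=\phi(x,t;y,s)\,\mu(dy,s)$ via the joint distribution, substitute the expansion \eqref{trans}, and integrate against $p_n(y;s)$ using orthogonality. Your version is in fact more careful than the paper's, which simply writes the integral and reads off the answer; your explicit invocation of the Markov reduction $E(\cdot\,|\,\mathcal{F}_{\geq t})=E(\cdot\,|\,X_t)$ and your discussion of the sum--integral interchange are welcome additions that the paper leaves implicit.
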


\begin{proof}
First notice that $\allowbreak \phi \left( x,t,y,s\right) \mu (dy,t)\mu
(dx,s)$ is a joint distribution of $(X_{s},X_{t}),$ consequently $\eta
(dy,s;x,t)\allowbreak =\allowbreak \mu (dy,s)\sum_{n\geq 0}\frac{1}{\hat{p}%
_{n}(t)}p_{n}(x,t)p_{n}(y,s)$ is the conditional distribution of $%
X_{s}|X_{t}=x$. Hence we have \newline
$E(p_{m}(X_{s};s)|X_{t}\allowbreak =\allowbreak x)\allowbreak =\allowbreak
\int p_{m}(y;s)\mu (dy.,s)(\sum_{n\geq 0}\frac{1}{\hat{p}_{n}(t)}%
p_{n}(x,t)p_{n}(y,s))\allowbreak =\allowbreak \frac{\hat{p}_{m}(s)}{\hat{p}%
_{m}(t)}p_{m}(X_{t};t)$ a.s. .
\end{proof}

Let us assume that polynomials $\left\{ p_{n}(x;t)\right\} _{n\geq -1}$
satisfy the following 3-term recurrence:%
\begin{equation}
xp_{n}(x;t)=\alpha _{n+1}(t)p_{n+1}(x;t)+\beta _{n}(t)p_{n}(x;t)+\gamma
_{n-1}(t)p_{n-1}(x;t),  \label{w_o}
\end{equation}%
with $p_{-1}(x;t)\allowbreak =\allowbreak 0,$ $p_{0}(x;t)\allowbreak
=\allowbreak 1$. We have the following immediate observations:

\begin{proposition}
\label{poczatek}i) $x\allowbreak =\allowbreak \alpha _{1}(t)p_{1}(x;t)+\beta
_{0}(t),$ hence $EX_{t}\allowbreak =\allowbreak \beta _{0}(t),$ $%
p_{1}(x;t)=(x-\beta _{0}(t))/\alpha _{1}(t)$

ii) $EX_{t}^{2}\allowbreak =\allowbreak \alpha _{1}^{2}(t)\hat{p}(t)+\beta
_{0}^{2}(t),$ consequently $\limfunc{var}(X_{t})\allowbreak =\allowbreak
\alpha _{1}^{2}(t)\hat{p}_{1}\left( t\right) .$

iii) $\gamma _{n-1}(t)\hat{p}_{n-1}(t)=\alpha _{n}(t)\hat{p}_{n}(t)$ in
particular $\gamma _{0}(t)\allowbreak =\allowbreak \alpha _{1}(t)\hat{p}(t)$%
. Since $\forall t\in I,~n\geq 1:\hat{p}_{n}(t)>0$ we deduce that $\forall
t\in I,~n\geq 1:\alpha _{n}(t)\gamma _{n-1}(t)>0.$

iv) $xp_{1}(x;t)\allowbreak =\allowbreak \alpha _{2}(t)p_{2}(x;t)+\beta
_{1}(t)p_{1}(x;t)+\gamma _{0}(t)$ hence 
\begin{eqnarray*}
E(X_{t}p_{1}(X_{t};t))\allowbreak &=&\allowbreak \gamma _{0}(t), \\
p_{2}(x;t)\allowbreak &=&\allowbreak \frac{(x-\beta _{0}(t))(x-\beta
_{1}(t))-\gamma _{0}(t)\alpha _{1}(t)}{\alpha _{1}(t)\alpha _{2}(t)},
\end{eqnarray*}%
v) $\alpha _{1}(t)p_{1}^{2}(x;t)\allowbreak =\allowbreak \alpha
_{2}(t)p_{2}(x;t)+(\beta _{1}(t)-\beta _{0}(t))p_{1}(x;t)+\gamma _{0}(t)$
\end{proposition}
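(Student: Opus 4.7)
The five items are essentially mechanical consequences of setting small indices in the three-term recurrence (\ref{w_o}) and exploiting orthogonality against $\mu(\cdot,t)$. My plan is to handle (i), (ii), (iv), (v) by direct substitution at $n=0$ and $n=1$, and to isolate (iii) as the one step that actually requires a computation with two different specialisations of the recurrence.

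For (i), I would put $n=0$ in (\ref{w_o}), invoke the initial conditions $p_{-1}\equiv 0$, $p_0\equiv 1$, and read off $x=\alpha_1(t)p_1(x;t)+\beta_0(t)$. Integrating against $\mu(\cdot,t)$ and using orthogonality of $p_1$ to $p_0\equiv 1$ gives $EX_t=\beta_0(t)$, and rearranging yields the formula for $p_1$. Squaring that identity and taking expectation, noting $Ep_1(X_t;t)=0$ and $Ep_1^2(X_t;t)=\hat{p}_1(t)$, immediately produces (ii).

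For (iii) I would compute $E\bigl(X_tp_n(X_t;t)p_{n-1}(X_t;t)\bigr)$ two different ways. First, multiply (\ref{w_o}) through by $p_{n-1}(x;t)$ and integrate against $\mu(\cdot,t)$: orthogonality kills the $p_{n+1}$ and $p_n$ contributions, leaving $\gamma_{n-1}(t)\hat{p}_{n-1}(t)$. Second, apply (\ref{w_o}) with index $n-1$, i.e.\ $xp_{n-1}=\alpha_n p_n+\beta_{n-1}p_{n-1}+\gamma_{n-2}p_{n-2}$, multiply by $p_n(x;t)$ and integrate: this time only the $\alpha_n$ term survives and we get $\alpha_n(t)\hat{p}_n(t)$. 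Equating the two expressions gives the claimed recursion, and the $n=1$ case (using $\hat{p}_0(t)=1$) is the boundary statement $\gamma_0(t)=\alpha_1(t)\hat{p}_1(t)$. Positivity of $\alpha_n(t)\gamma_{n-1}(t)$ then follows because $\hat{p}_n(t),\hat{p}_{n-1}(t)>0$ force $\alpha_n(t)$ and $\gamma_{n-1}(t)$ to have the same sign, and neither can vanish (otherwise $p_n$ would not have the correct degree).

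Items (iv) and (v) are just the $n=1$ specialisation of (\ref{w_o}). Taking expectation of $xp_1=\alpha_2p_2+\beta_1p_1+\gamma_0$ yields $EX_tp_1(X_t;t)=\gamma_0(t)$; solving for $p_2$ and substituting the expression for $p_1$ from (i) gives the closed form in (iv). For (v), I would start from $\alpha_1(t)p_1^2(x;t)=(x-\beta_0(t))p_1(x;t)$ (which is just (i) multiplied by $p_1$), rewrite the right-hand side as $xp_1(x;t)-\beta_0(t)p_1(x;t)$, and then replace $xp_1$ using (\ref{w_o}) with $n=1$. The only non-routine step in the whole proposition is the double-integration argument for (iii); everything else is a direct calculation.
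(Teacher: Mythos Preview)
Your proposal is correct and follows essentially the same route as the paper's own proof: direct substitution of $n=0,1$ into (\ref{w_o}) for (i), (ii), (iv), (v), and the orthogonality computation for (iii). Your treatment of (iii) is in fact more explicit than the paper's, which merely says ``multiply both sides of (\ref{w_o}) by $p_{n-1}$ and integrate''; you spell out that one must also apply the recurrence at index $n-1$ and integrate against $p_n$ to obtain the matching expression $\alpha_n(t)\hat{p}_n(t)$.
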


\begin{proof}
i) follows directly (\ref{w_o}) and initial conditions. ii) take square of
both sides of $p_{1}(x;t)=(x-\beta _{0}(t))/\alpha _{1}(t)$ and then
definition of $\hat{p}$. iii) multiply both sides of (\ref{w_o}) by $%
p_{n-1}(t)$ and integrate with respect to $\mu (dx,t)$. Secondly we note
that $\hat{p}_{0}(t)\allowbreak =\allowbreak 1$. v) This is so since $%
xp_{1}(x;t)\allowbreak =\allowbreak \alpha _{2}(t)p_{2}(x;t)+\beta
_{1}(t)p_{1}(x;t)+\gamma _{0}(t)\allowbreak =\allowbreak (\alpha
_{1}(t)p_{1}(t)+\beta _{0}(t))p_{1}(x;t)$ by i).
\end{proof}

\section{Harnesses\label{harness}}

The notion of harness as a special type of the stochastic process was
introduced by Hammersley in \cite{Ham67}. Recently it has been so to say
rediscovered and is intensively studied by Yor, Bryc, Weso\l owski, Matysiak
and others in \cite{Yor05}, \cite{BryMaWe}, \cite{brwe05}, \cite{BryMaWe07}, 
\cite{BryMaWe11}. In this chapter we are going to study conditions that are
to be satisfied for the OPM process to be a harness and also quadratic
harness.

We will use the extended slightly definitions of both these notions.

\begin{definition}
A Markov process $\mathbf{X=(}X_{t})_{t\in I}$ such that $\forall t\in
I:E\left\vert X_{t}\right\vert ^{r}\allowbreak <\allowbreak \infty ,$ $r\in 
\mathbb{N}$ is said to be $r-$harness if $\forall s<t<u:$ $E(X_{t}^{r}|%
\mathcal{F}_{s,u})$ is a polynomial of degree $r$ in $X_{s}$ and $X_{u}.$
\end{definition}

\begin{definition}
$1-$harness will be called simply harness while the process that is both $r-$%
harness for $r\allowbreak =\allowbreak 1,2$ will be called quadratic harness.
\end{definition}

\begin{remark}
Let us notice that every $r-$harness for $r\allowbreak =\allowbreak 1,\ldots
,N$ which is also $N-$TLI and $N-$MSC is also both $N-$rMPR and $N-$lMPR
however by no means conversely!.
\end{remark}

\begin{remark}
Notice also that for the OPM process is a harness iff 
\begin{equation}
E(p_{1}(X_{t};t)|\mathcal{F}_{s,u})=\hat{A}(s,t,u)p_{1}(X_{s};s)+\hat{B}%
(s,t,u)p_{1}(X_{u};u)+\hat{C}(s,t,u)  \label{har}
\end{equation}%
is satisfied for some functions $\hat{A},$ $\hat{B},$ $\hat{C}$ of $s,t,u\in
I$ and such that $s<t<u$. Similarly if a OPM process is a quadratic harness
if both (\ref{har}) is satisfied with some $\hat{A},$ $\hat{B},$ $\hat{C}$
and 
\begin{equation}
E(p_{2}(X_{t};t)|\mathcal{F}%
_{s,u})=Ap_{2}(X_{s};s)+Bp_{1}(X_{s};s)p_{1}(X_{u};u)+Cp_{2}(X_{u};u)+Dp_{1}(X_{s};s)+Ep_{1}(X_{u};u)+F.
\label{QH}
\end{equation}%
with some $A,$ $B,$ $C,$ $D,$ $E,$ $F$ of (depending on $s,t,u$ ) for all $%
s<t<u.$
\end{remark}

\subsection{Harnesses.}

First let us study harnesses. Hence we assume that (\ref{har}) is satisfied
with for some continuous functions of $s,$ $t,$ $u$. First of all notice
that $\hat{C}$ must be equal zero since $Ep_{1}(X_{t};t)\allowbreak
=\allowbreak 0$ and we get equality $0\allowbreak =\allowbreak 0\allowbreak
+\allowbreak 0\allowbreak +\allowbreak \hat{C}$ after calculating
expectation of both sides.

In the sequel we will denote for simplicity $\hat{p}_{1}(t)\allowbreak
=\allowbreak \hat{p}(t).$

We have the following simple lemma.

\begin{lemma}
\label{ha1}If an OPM process $\mathbf{X=(}X_{t})_{t\in I}$ is a harness
then: 
\begin{eqnarray}
\hat{A}(s,t,u) &=&\frac{\hat{p}(u)-\hat{p}(t)}{\hat{p}(u)-\hat{p}(s)},
\label{_A} \\
\hat{B}(s,t,u) &=&\frac{\hat{p}(t)-\hat{p}\left( s\right) }{\hat{p}\left(
u\right) -\hat{p}\left( s\right) }.  \label{_C}
\end{eqnarray}
\end{lemma}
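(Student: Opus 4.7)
The plan is to derive two linear equations in $\hat{A}(s,t,u)$ and $\hat{B}(s,t,u)$ by conditioning the harness identity \eqref{har} (with $\hat{C}\equiv 0$ already observed) on the two sub--$\sigma$-fields $\mathcal{F}_{\leq s}$ and $\mathcal{F}_{\geq u}$, both of which are contained in $\mathcal{F}_{s,u}$, so that the tower property applies directly.

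First I would apply $E(\cdot\mid\mathcal{F}_{\leq s})$ to both sides of \eqref{har}. The tower property combined with the forward martingale property \eqref{Mar} collapses the left-hand side to $p_{1}(X_{s};s)$, while on the right-hand side the same martingale property gives $E(p_{1}(X_{u};u)\mid\mathcal{F}_{\leq s})=p_{1}(X_{s};s)$. Hence \eqref{har} reduces to $(\hat{A}+\hat{B})p_{1}(X_{s};s)=p_{1}(X_{s};s)$, and since $p_{1}(X_{s};s)$ is a genuine (non-constant) degree-one polynomial in $X_{s}$, this forces
\[
\hat{A}(s,t,u)+\hat{B}(s,t,u)=1.
\]

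Next I would apply $E(\cdot\mid\mathcal{F}_{\geq u})$ to \eqref{har}. Here the reversed-martingale identity supplied by the corollary to Theorem~\ref{expansion},
\[
E(p_{1}(X_{v};v)\mid\mathcal{F}_{\geq w})=\frac{\hat{p}(v)}{\hat{p}(w)}\,p_{1}(X_{w};w)\quad\text{for }v<w,
\]
gives both $E(p_{1}(X_{t};t)\mid\mathcal{F}_{\geq u})=\bigl(\hat{p}(t)/\hat{p}(u)\bigr)p_{1}(X_{u};u)$ and $E(p_{1}(X_{s};s)\mid\mathcal{F}_{\geq u})=\bigl(\hat{p}(s)/\hat{p}(u)\bigr)p_{1}(X_{u};u)$. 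Substituting into \eqref{har}, cancelling the common nonzero factor $p_{1}(X_{u};u)/\hat{p}(u)$, yields the second linear relation $\hat{p}(t)=\hat{A}(s,t,u)\hat{p}(s)+\hat{B}(s,t,u)\hat{p}(u)$.

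Solving the two-by-two linear system produced by these two relations is immediate and produces formulas \eqref{_A} and \eqref{_C}. The only point requiring a brief justification, which I do not expect to be a real obstacle, is the non-vanishing of the denominator $\hat{p}(u)-\hat{p}(s)$: this follows from the monotonicity of $\hat{p}$ noted right before Condition~\ref{sq_int} together with the $N$-TLI hypothesis, which rules out the degenerate case in which the martingale $p_{1}(X_{t};t)$ would be constant on $[s,u]$.
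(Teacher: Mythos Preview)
Your argument is correct and arrives at exactly the same pair of linear equations as the paper, namely $\hat{A}+\hat{B}=1$ and $\hat{p}(t)=\hat{A}\hat{p}(s)+\hat{B}\hat{p}(u)$, after which the resolution is identical. The difference lies in how the two equations are obtained. The paper simply multiplies \eqref{har} by $p_{1}(X_{s};s)$ and by $p_{1}(X_{u};u)$ respectively and takes \emph{unconditional} expectations, using only the forward martingale identity $E(p_{1}(X_{v};v)\mid\mathcal{F}_{\leq w})=p_{1}(X_{w};w)$ for $w<v$ to evaluate all the cross moments $Ep_{1}(X_{v};v)p_{1}(X_{w};w)=\hat{p}(w)$. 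You instead condition on $\mathcal{F}_{\leq s}$ and on $\mathcal{F}_{\geq u}$; the first gives the same relation from the forward martingale alone, but the second requires the reversed-martingale identity from the corollary to Theorem~\ref{expansion}, and that corollary rests on Condition~\ref{sq_int}. In the paper Lemma~\ref{ha1} is stated and proved \emph{before} Condition~\ref{sq_int} is imposed (the sentence immediately following the lemma reads ``Now let us add assumption that \ldots\ it satisfies Condition~\ref{sq_int}''), so the paper's route is strictly more economical in its hypotheses. Your proof is perfectly valid once Condition~\ref{sq_int} is in force, but if you want to match the lemma as stated you can replace the second step by the paper's device of multiplying by $p_{1}(X_{u};u)$ and taking expectation, which needs only the forward martingale property.
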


\begin{proof}
Multiplying (\ref{har}) first by $p_{1}(X_{s};s)$ and then by $%
p_{1}(X_{u};u) $ and integrating we get equations:%
\begin{eqnarray*}
1 &=&\hat{A}+\hat{B}, \\
\hat{p}(t) &=&\hat{A}\hat{p}\left( s\right) +\hat{B}\hat{p}(u).
\end{eqnarray*}%
To get (\ref{_A}) and (\ref{_C}) is trivial.
\end{proof}

Now let us add assumption that our process has not only orthogonal
polynomial martingales but also it satisfies Condition \ref{sq_int}.
Consequently its transitional density is given by (\ref{trans}). We have the
following theorem.

\begin{theorem}
\label{lin_har}Let $\mathbf{X=(}X_{t})_{t\in I}$ be a Markov process with
orthogonal polynomial martingales $\left\{ p_{n}(x;t)\right\} _{n\geq -1}$
and measures $\mu (dx,t)$ and $\eta (dx,t;y,s)$ being respectively one
dimensional and transitional distributions satisfy Condition \ref{sq_int}.
Then $\mathbf{X}$ is a harness iff coefficients of 3-term recurrence (\ref%
{w_o}) are given by:%
\begin{eqnarray}
\beta _{n}(t) &=&\beta _{0}(t)+b_{n}\alpha _{1}(t)+\hat{b}_{n}\gamma _{0}(t),
\label{_be} \\
\alpha _{n}(t) &=&a_{n}\alpha _{1}(t)+\hat{a}_{n}\gamma _{0}(t),  \label{_al}
\\
\gamma _{n}(t) &=&c_{n}\alpha _{1}(t)+\hat{c}_{n}\gamma _{0}(t),
\label{_gam}
\end{eqnarray}%
where $\left\{ a_{n},\hat{a}_{n},b_{n},\hat{b}_{n},c_{n},\hat{c}_{n}\right\}
_{n\geq 0}$ are some number sequences such that $b_{0}=\hat{b}%
_{0}\allowbreak =\allowbreak \allowbreak \hat{a}_{0}\allowbreak \allowbreak
=\allowbreak a_{0}\allowbreak =\allowbreak \hat{a}_{1}\allowbreak
=\allowbreak \allowbreak c_{0}=\allowbreak 0,$ $a_{1}\allowbreak
=\allowbreak \hat{c}_{0}\allowbreak =\allowbreak 1$ and $\forall t\in I,$ $%
n\geq 0:$ $\alpha _{n}(t)\gamma _{n-1}(t)>0.$
\end{theorem}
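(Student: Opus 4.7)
The plan is to rewrite the harness identity (\ref{har}) as an equality of two formal Fourier series in the double basis $\{p_j(y,s)\,p_k(z,u)\}_{j,k\ge 0}$ and then read off the coefficients. By the Markov property and Condition \ref{sq_int}, taking expectations in (\ref{har}) forces $\hat C\equiv 0$, and the identity itself becomes, with $y=X_s$, $z=X_u$ and $\phi$ the Radon--Nikodym derivative $d\eta/d\mu$,
\[
\int p_1(x,t)\,\phi(x,t;y,s)\,\phi(z,u;x,t)\,\mu(dx,t)
=\bigl[\hat A\,p_1(y,s)+\hat B\,p_1(z,u)\bigr]\phi(z,u;y,s),
\]
with $\hat A$, $\hat B$ given by Lemma \ref{ha1}. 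Inserting expansion (\ref{trans}) into both factors $\phi(\cdot,t;y,s)$ and $\phi(z,u;\cdot,t)$ on the left and linearising $x\,p_j(x,t)$ through the recurrence (\ref{w_o}), the left-hand side becomes a double sum in $p_j(y,s)\,p_k(z,u)$ whose only non-vanishing bidegrees $(j,k)$ satisfy $k\in\{j-1,j,j+1\}$.

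The right-hand side is expanded in the same basis, now using the recurrences at times $s$ and $u$ to expand $p_1(y,s)\,p_n(y,s)$ and $p_1(z,u)\,p_n(z,u)$ respectively; once again only $|j-k|\le 1$ contributes. Comparing coefficients in the three cases $k\in\{j-1,j,j+1\}$ and using Proposition \ref{poczatek}(iii) in the form $\gamma_{n-1}(t)\hat p_{n-1}(t)=\alpha_n(t)\hat p_n(t)$ to cancel the common $\hat p_\bullet(u)$-factors, each of the three comparisons collapses to one and the same three-point functional equation
\[
f(t)=\frac{\hat p(u)-\hat p(t)}{\hat p(u)-\hat p(s)}f(s)+\frac{\hat p(t)-\hat p(s)}{\hat p(u)-\hat p(s)}f(u),\qquad s<t<u,
\]
with $f$ equal, respectively, to $(\beta_j-\beta_0)/\alpha_1$, $\gamma_j/\alpha_1$ and $\alpha_j/\alpha_1$.

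Rearranged, this identity says that $(f(t)-f(s))/(\hat p(t)-\hat p(s))$ is independent of its arguments, hence $f(t)=A\hat p(t)+B$ for constants depending only on $j$. Since Proposition \ref{poczatek}(iii) at $n=1$ gives $\hat p(t)=\gamma_0(t)/\alpha_1(t)$, multiplying through by $\alpha_1(t)$ rewrites each affine identity in the required shape (\ref{_be})--(\ref{_gam}); the initial values of the sequences then follow from specialising to $j=0,1$ and reading off the tautologies $\beta_0=\beta_0$, $\alpha_1=\alpha_1$, $\gamma_0=\gamma_0$. For the converse, one substitutes (\ref{_be})--(\ref{_gam}) back into the three coefficient equations above: each reduces immediately to the three-point affine identity for $\hat p$, and running the Fourier computation in reverse reconstructs (\ref{har}). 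I expect the main technical obstacle to be the careful bookkeeping of the three index shifts in the double expansion, together with the repeated use of $\gamma_{n-1}\hat p_{n-1}=\alpha_n\hat p_n$ needed to bring the a priori three different coefficient equations into the single symmetric form displayed above.
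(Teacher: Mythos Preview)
Your proposal is correct and follows essentially the same route as the paper: expand the conditional density via (\ref{trans}), linearise the product $p_1 p_j$ with the three-term recurrence, match coefficients of $p_j(y,s)p_k(z,u)$ to obtain the three functional equations, and solve the resulting affine identity in $\hat p(t)$ (the paper isolates this last step as a separate Lemma). One small wording slip: in the integrand you have $p_1(x,t)$, so what you actually linearise is $p_1(x,t)p_j(x,t)$, not $x\,p_j(x,t)$; the paper does exactly this via the normalised coefficients $\hat\alpha_{n+1}=\alpha_{n+1}/\alpha_1$, $\hat\beta_n=(\beta_n-\beta_0)/\alpha_1$, $\hat\gamma_{n-1}=\gamma_{n-1}/\alpha_1$, and your three functions $f$ are precisely these. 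Your explicit remark on the converse is a welcome addition, since the paper's proof treats only the forward direction in detail.
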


\begin{proof}
Proof is shifted to Section \ref{dowody}.
\end{proof}

\begin{example}
If $\mathbf{X}$ is the $(\alpha ,q)-$OU process we have (basing on
Proposition \ref{poczatek}) $p_{n}(x;t)\allowbreak =\exp (\alpha
nt\allowbreak )H_{n}(x|q)$. Hence following (\ref{He}) we have $\alpha
_{n}(t)\allowbreak =\allowbreak \exp (-\alpha t),$ $\gamma
_{n}(t)\allowbreak =\allowbreak \lbrack n+1]_{q}\exp (\alpha t)$, $\hat{p}%
(t)\allowbreak =\allowbreak \gamma _{0}(t)/\alpha _{1}(t)\allowbreak
=\allowbreak \exp (2\alpha t)$. Thus equation (\ref{_al}) is satisfied with $%
a_{n}\allowbreak =\allowbreak 1,$ $\hat{a}_{n}\allowbreak =\allowbreak 0,$
equation (\ref{_gam}) with $c_{n}\allowbreak =\allowbreak 0,$ $\hat{c}%
_{n}=[n+1]_{q}$ while equation (\ref{_be}) with $b_{n}\allowbreak
=\allowbreak \hat{b}_{n}\allowbreak =\allowbreak 0.$

Now let us consider $q-$Wiener process. Recall that now $p_{n}(x;t)%
\allowbreak =\allowbreak t^{n/2}H_{n}(\frac{x}{\sqrt{t}}|q)$. Hence $\alpha
_{n}(t)\allowbreak =\allowbreak 1$ for $n\geq 1$ and $\gamma
_{0}(t)\allowbreak =\allowbreak \hat{p}(t)\allowbreak =\allowbreak \limfunc{%
var}(X_{t})\allowbreak =\allowbreak t$ as it follows from Proposition \ref%
{poczatek}. Hence $\gamma _{n}(t)\allowbreak =\allowbreak \lbrack
n+1]_{q}t\allowbreak =\allowbreak \lbrack n+1]_{q}\gamma _{0}(t)$ another
words $c_{n}\allowbreak =\allowbreak 0,$ $\hat{c}_{n}\allowbreak
=\allowbreak \lbrack n+1]_{q}.$
\end{example}

\subsection{Quadratic harnesses}

We will study now conditions leading to the fact that considered process $%
\mathbf{X}$ is a quadratic harness i.e. harness and such that $E(X_{t}^{2}|%
\mathcal{F}_{s,t})$ is a quadratic function of $X_{s}$ and $X_{u}.$ That is
assume that (\ref{QH}) is satisfied with some continuous functions $A,$ $B,$ 
$C,$ $D,$ $E,$ $F$ of $s,t,u$ for all $s<t<u,$ $s,t,u\in I.$

We have the following lemma:

\begin{lemma}
\label{quadrat}Let us denote $a,$ $\hat{a},$ $b,$ $\hat{b},$ $c,$ $\hat{c}$
six parameters defined by the conditions $\alpha _{2}(t)\allowbreak
=\allowbreak \alpha (t)(a+\hat{a}\hat{p}(t)),$ $\beta _{1}(t)-\beta
_{0}(t)\allowbreak =\allowbreak \alpha (t)(b+\hat{b}\hat{p}(t)),$ $\gamma
_{1}(t)\allowbreak =\allowbreak \alpha (t)(c+\hat{c}\hat{p}(t)),$ $(a+\hat{a}%
\hat{p}(t))(c+\hat{c}\hat{p}(t))>0$ for all $t\in I,$ where we denoted for
simplicity $\alpha (t)=\alpha _{1}(t)$ guaranteeing that the analyzed
process is a harness. Then 
\begin{equation*}
F\allowbreak =-B\hat{p}(s).
\end{equation*}%
Let us denote for simplicity:%
\begin{equation}
\kappa \allowbreak =\allowbreak (1+b\hat{b}+\hat{a}c),\lambda \allowbreak
=\allowbreak (a\hat{c}-\hat{a}c).  \label{kap}
\end{equation}

$\allowbreak $If $\kappa \allowbreak =\allowbreak \lambda \allowbreak
=\allowbreak 0$ then 
\begin{equation*}
A=\frac{\hat{p}(u)-\hat{p}(t)}{\hat{p}(u)-\hat{p}(s)}-B\frac{\hat{a}c}{\hat{c%
}},~C=\frac{\hat{p}(t)-\hat{p}(s)}{\hat{p}(u)-\hat{p}(s)}-\hat{a}B\hat{p}(s),
\end{equation*}%
and $B$ can be selected to be any nonzero function of $s<t<u$ and $D$ and $E$
are given by (\ref{de}), below.

If $\kappa ^{2}+\lambda ^{2}>0,$ then:%
\begin{gather}
B=\frac{a\hat{c}\lambda (\hat{p}(t)-\hat{p}(s))(\hat{p}(u)-\hat{p}(t))}{(a+%
\hat{a}\hat{p}(t))(\hat{p}(u)-\hat{p}(s))(\hat{p}(s)\hat{p}(u)\hat{a}\hat{c}%
\kappa +\hat{p}(u)a\hat{c}\kappa +\hat{p}(s)a\hat{c}(\kappa -\lambda
)+ac\kappa )},  \label{bb} \\
A=\frac{(a+\hat{a}\hat{p}(s))(\hat{p}(u)-\hat{p}(t))(\hat{p}(u)\hat{p}(t)%
\hat{a}\hat{c}\kappa +\hat{p}(u)a\hat{c}\kappa +\hat{p}(t)a\hat{c}(\kappa
-\lambda )+ac\kappa )}{(a+\hat{a}\hat{p}(t))(\hat{p}(u)-\hat{p}(s))(\hat{p}%
(s)\hat{p}(u)\hat{a}\hat{c}\kappa +\hat{p}(u)a\hat{c}\kappa +\hat{p}(s)a\hat{%
c}(\kappa -\lambda )+ac\kappa )},  \label{aa} \\
C=\frac{(a+\hat{a}\hat{p}(u))(\hat{p}(t)-\hat{p}(s))(\hat{p}(t)\hat{p}(s)%
\hat{a}\hat{c}\kappa +\hat{p}(s)a\hat{c}(\kappa +\lambda )+\hat{p}(t)a\hat{c}%
\kappa +ac\kappa )}{(a+\hat{a}\hat{p}(t))(\hat{p}(u)-\hat{p}(s))(\hat{p}(s)%
\hat{p}(u)\hat{a}\hat{c}\kappa +\hat{p}(u)a\hat{c}\kappa +\hat{p}(s)a\hat{c}%
(\kappa -\lambda )+ac\kappa )},  \label{cc}
\end{gather}

and$~$

\begin{equation}
D=-bB,~E=-\hat{b}B\hat{p}(s).  \label{de}
\end{equation}
\end{lemma}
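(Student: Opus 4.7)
The plan is to multiply the quadratic harness identity (\ref{QH}) by each element of a small test family drawn from $\{1,\ p_1(X_s;s),\ p_1(X_u;u),\ p_2(X_s;s),\ p_2(X_u;u),\ p_1(X_s;s)p_1(X_u;u)\}$ and take full expectations. Every resulting mixed moment reduces to a polynomial in $\hat{p}(s),\hat{p}(t),\hat{p}(u)$ through three mechanisms: the forward martingale property $E(p_n(X_t;t)|\mathcal{F}_{\leq s})=p_n(X_s;s)$; the reverse martingale property $E(p_n(X_s;s)|\mathcal{F}_{\geq u})=(\hat{p}_n(s)/\hat{p}_n(u))p_n(X_u;u)$ from the corollary following Theorem \ref{expansion}; and the auxiliary quadratic identity
\begin{equation*}
p_1^2(X_t;t)=(a+\hat{a}\hat{p}(t))p_2(X_t;t)+(b+\hat{b}\hat{p}(t))p_1(X_t;t)+\hat{p}(t),
\end{equation*}
which is Proposition \ref{poczatek}(v) rewritten under the parametrization of $\alpha_2$, $\beta_1-\beta_0$, $\gamma_1$ imposed by the harness assumption and Theorem \ref{lin_har}.

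Taking the full expectation of (\ref{QH}) gives $0=B\hat{p}(s)+F$ at once, proving $F=-B\hat{p}(s)$. Testing against $p_1(X_s;s)$ and against $p_1(X_u;u)$ produces a pair of linear relations in $B,D,E$; after evaluating $E(p_1^3(X_s;s))=(b+\hat{b}\hat{p}(s))\hat{p}(s)$ and $E(p_1(X_s;s)p_1^2(X_u;u))=(b+\hat{b}\hat{p}(u))\hat{p}(s)$ by applying the auxiliary identity, subtracting the two relations eliminates $D$ and gives $E=-\hat{b}\hat{p}(s)B$; back-substitution then yields $D=-bB$. These are exactly (\ref{de}).

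For $A,B,C$ I would next use the remaining three test functions. Testing against $p_2(X_s;s)$ yields $A+C+B(a+\hat{a}\hat{p}(s))=1$. Testing against $p_2(X_u;u)$ yields
\begin{equation*}
A\hat{p}_2(s)+B\frac{\hat{p}(s)}{\hat{p}(u)}(a+\hat{a}\hat{p}(u))\hat{p}_2(u)+C\hat{p}_2(u)=\hat{p}_2(t),
\end{equation*}
where Proposition \ref{poczatek}(iii) supplies $\hat{p}_2(\tau)=\hat{p}(\tau)(c+\hat{c}\hat{p}(\tau))/(a+\hat{a}\hat{p}(\tau))$. The third and most laborious test function, $p_1(X_s;s)p_1(X_u;u)$, requires the quartic mixed moment $E(p_1^2(X_s;s)p_1^2(X_u;u))$; expanding the inner $p_1^2(X_u;u)$ via the auxiliary identity, conditioning on $\mathcal{F}_{\leq s}$, and applying the auxiliary identity a second time reduces this to quantities already computed, while the left-hand side simplifies to $(c+\hat{c}\hat{p}(t))\hat{p}(s)$ after expanding $p_1(X_t;t)p_2(X_t;t)$ through the three-term recurrence (\ref{w_o}). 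The outcome is the third linear relation linking $A,B,C$. Together the three equations form a $3\times 3$ linear system in $A,B,C$.

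The main obstacle is the final algebraic reduction of this system. After clearing denominators $a+\hat{a}\hat{p}(\cdot)$ (nonzero by the standing assumption) and substituting the already-derived $D,E,F$, the determinant factors as a nonzero scalar multiple of $\hat{p}(s)\hat{p}(u)\hat{a}\hat{c}\kappa+\hat{p}(u)a\hat{c}\kappa+\hat{p}(s)a\hat{c}(\kappa-\lambda)+ac\kappa$, with $\kappa$ and $\lambda$ as in (\ref{kap}); these two combinations emerge naturally as the scalars under which cross-terms in $\hat{p}(s)\hat{p}(u)$ and $\hat{p}(u)-\hat{p}(s)$ consolidate. When $\kappa^2+\lambda^2>0$ the determinant does not vanish and Cramer's rule delivers precisely (\ref{bb}), (\ref{aa}), (\ref{cc}). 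When $\kappa=\lambda=0$ the third equation becomes a linear combination of the first two, the system's rank drops to two, so $B$ may be chosen freely and the stated expressions for $A$ and $C$ follow by direct substitution into the first two relations. Conceptually the whole proof is moment matching against the orthogonal polynomial basis; the bulk of the effort is the bookkeeping of the final linear algebra.
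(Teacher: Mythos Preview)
Your proposal is correct and follows essentially the same approach as the paper: both proofs test (\ref{QH}) against the same six functions $1,\ p_1(X_s;s),\ p_1(X_u;u),\ p_2(X_s;s),\ p_2(X_u;u),\ p_1(X_s;s)p_1(X_u;u)$, use the martingale and reversed-martingale identities together with Proposition~\ref{poczatek} to evaluate the mixed moments, and then solve the resulting linear system, separating it into a $2\times 2$ block for $D,E$ and a $3\times 3$ block for $A,B,C$. Your treatment of the degenerate case $\kappa=\lambda=0$ via rank drop also matches the paper's conclusion.
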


\begin{proof}
Proof is shifted to Section \ref{dowody}.
\end{proof}

\begin{remark}
Notice that following Proposition \ref{poczatek} we have $\hat{p}%
_{2}(t)\allowbreak =\allowbreak \hat{p}(t)\frac{a+\hat{a}\hat{p}(t)}{c+\hat{c%
}\hat{p}(t)}$. Also as a consequence of Favard's Theorem we must have $(a+%
\hat{a}\hat{p}(t))(c+\hat{c}\hat{p}(t))\allowbreak >\allowbreak 0$. Further
keeping in mind interpretation of $\hat{p}_{2}(t)$ as the expectation of the
'square bracket' of martingale $p_{2}(X_{t};t)$ we deduce that $\hat{p}%
_{2}(t)$ must be non-decreasing. If interval $I$ is bounded then the only
restrictions on parameters $a,$ $\hat{a},$ $c,$ $\hat{c}$ are 
\begin{eqnarray*}
(a+\hat{a}\hat{p}(t))(c+\hat{c}\hat{p}(t))\allowbreak &>&\allowbreak 0, \\
\hat{p}(s)\frac{a+\hat{a}\hat{p}(s)}{c+\hat{c}\hat{p}(s)} &<&\hat{p}(t)\frac{%
a+\hat{a}\hat{p}(t)}{c+\hat{c}\hat{p}(t)},
\end{eqnarray*}%
for all $s,t\in I$ and $s<t.$

Now suppose that $I$ is unbounded. If $I$ is unbounded from the right and
suppose that $\hat{p}(t)$ is bounded. This would mean that martingale $%
p_{1}(X_{t};t)$ converges to finite limit a.s. and in $L_{2}$. Consequently $%
X_{t}/\alpha _{1}(t)$ would have a finite a.s. limit. This is rather
uninteresting case from the point of view of stochastic processes theory.
Hence let us assume from now on that $\hat{p}(t)$ is unbounded if $I$ is
unbounded from the right hand side. Similarly let us assume that $\hat{p}%
(t)\longrightarrow 0$ as $t\longrightarrow -\infty $ that is if $I$ is
unbounded from the left hand side or if left boundary of $I$ is $0,$ i.e. if 
$I\allowbreak =\allowbreak \lbrack 0,r],$ then assume that $\hat{p}%
(0)\allowbreak =\allowbreak 0$. We have also $\frac{d}{dt}(\hat{p}(t)\frac{a+%
\hat{a}\hat{p}(t)}{c+\hat{c}\hat{p}(t)})\allowbreak =\allowbreak \frac{\hat{a%
}\hat{c}\hat{p}(t)^{2}+2a\hat{c}\hat{p}(t)+ac}{(c+\hat{c}\hat{p}(t))^{2}}%
\hat{p}^{\prime }(t)$ and $\hat{p}^{\prime }(t)>0$. Consequently if $I$ is
unbounded from the left hand side or $I\allowbreak =\allowbreak \lbrack 0,r]$
we must have $ac\allowbreak \geq \allowbreak 0$ and if $I$ is unbounded from
the right hand side we must have $\hat{a}\hat{c}\allowbreak \geq \allowbreak
0.$ If $\hat{a}\hat{c}\neq 0$ then if $I$ is unbounded from the right we
must have $\hat{a}\hat{c}>0$.
\end{remark}

We have also the following corollary concerning particular cases:

\begin{corollary}
\label{part_case} i) If $\kappa \allowbreak =\allowbreak 0$ and $\lambda
\neq 0$ then we have:%
\begin{eqnarray*}
B &=&-\frac{(\hat{p}(t)-\hat{p}(s))(\hat{p}(u)-\hat{p}(t))}{(a+\hat{a}\hat{p}%
(t))(\hat{p}(u)-\hat{p}(s))\hat{p}(s)}, \\
A &=&\frac{(a+\hat{a}\hat{p}(s))(\hat{p}(u)-\hat{p}(t))\hat{p}(t)}{(a+\hat{a}%
\hat{p}(t))(\hat{p}(u)-\hat{p}(s))\hat{p}(s)}, \\
C &=&\frac{(a+\hat{a}\hat{p}(u))(\hat{p}(t)-\hat{p}(s))}{(a+\hat{a}\hat{p}%
(t))(\hat{p}(u)-\hat{p}(s))}.
\end{eqnarray*}

ii)If $\kappa \neq \allowbreak 0$ and $\lambda \allowbreak =0$ then 
\begin{eqnarray*}
B &=&0, \\
A &=&\frac{(a+\hat{a}\hat{p}(s))(\hat{p}(u)-\hat{p}(t))(\hat{p}(u)\hat{p}(t)%
\hat{a}\hat{c}+\hat{p}(u)a\hat{c}+\hat{p}(t)a\hat{c}+ac)}{(a+\hat{a}\hat{p}%
(t))(\hat{p}(u)-\hat{p}(s))(\hat{p}(s)\hat{p}(u)\hat{a}\hat{c}+\hat{p}(u)a%
\hat{c}+\hat{p}(s)a\hat{c}+ac)}=\frac{(\hat{p}(u)-\hat{p}(t))}{(\hat{p}(u)-%
\hat{p}(s))}, \\
C &=&\frac{(a+\hat{a}\hat{p}(u))(\hat{p}(t)-\hat{p}(s))(\hat{p}(t)\hat{p}(s)%
\hat{a}\hat{c}+\hat{p}(s)a\hat{c}+\hat{p}(t)a\hat{c}+ac)}{(a+\hat{a}\hat{p}%
(t))(\hat{p}(u)-\hat{p}(s))(\hat{p}(s)\hat{p}(u)\hat{a}\hat{c}+\hat{p}(u)a%
\hat{c}+\hat{p}(s)a\hat{c}+ac)}=\frac{(\hat{p}(t)-\hat{p}(s))}{(\hat{p}(u)-%
\hat{p}(s))}.
\end{eqnarray*}

iii) Otherwise if $\kappa \allowbreak \neq \allowbreak 0$ and $\lambda
\allowbreak \allowbreak \neq \allowbreak 0$ then after dividing both
denominators and denumerators by $\kappa $ we get: 
\begin{eqnarray*}
B &=&\frac{a\hat{c}\frac{\lambda }{\kappa }(\hat{p}(t)-\hat{p}(s))(\hat{p}%
(u)-\hat{p}(t))}{(a+\hat{a}\hat{p}(t))(\hat{p}(u)-\hat{p}(s))(\hat{p}(s)\hat{%
p}(u)\hat{a}\hat{c}+\hat{p}(u)a\hat{c}+\hat{p}(s)a\hat{c}\frac{\kappa
-\lambda }{\kappa }+ac)}, \\
A &=&\frac{(a+\hat{a}\hat{p}(s))(\hat{p}(u)-\hat{p}(t))(\hat{p}(u)\hat{p}(t)%
\hat{a}\hat{c}+\hat{p}(u)a\hat{c}+\hat{p}(t)a\hat{c}\frac{\kappa -\lambda }{%
\kappa }+ac)}{(a+\hat{a}\hat{p}(t))(\hat{p}(u)-\hat{p}(s))(\hat{p}(s)\hat{p}%
(u)\hat{a}\hat{c}+\hat{p}(u)a\hat{c}+\hat{p}(s)a\hat{c}\frac{\kappa -\lambda 
}{\kappa }+ac)}, \\
C &=&\frac{(a+\hat{a}\hat{p}(u))(\hat{p}(t)-\hat{p}(s))(\hat{p}(t)\hat{p}(s)%
\hat{a}\hat{c}+\hat{p}(s)a\hat{c}\frac{\kappa -\lambda }{\kappa }+\hat{p}(t)a%
\hat{c}+ac)}{(a+\hat{a}\hat{p}(t))(\hat{p}(u)-\hat{p}(s))(\hat{p}(s)\hat{p}%
(u)\hat{a}\hat{c}+\hat{p}(u)a\hat{c}+\hat{p}(s)a\hat{c}\frac{\kappa -\lambda 
}{\kappa }+ac)}.
\end{eqnarray*}%
If $a\hat{c}\neq 0$ then one can simplify these expressions further by
dividing both denominators and denumerators by $a\hat{c}$ and we get: 
\begin{eqnarray}
B &=&\frac{\frac{\lambda }{\kappa }(\hat{p}(t)-\hat{p}(s))(\hat{p}(u)-\hat{p}%
(t))}{(a+\hat{a}\hat{p}(t))(\hat{p}(u)-\hat{p}(s))(\hat{p}(s)\hat{p}(u)\frac{%
\hat{a}}{a}+\hat{p}(u)+\hat{p}(s)\frac{\kappa -\lambda }{\kappa }+\frac{c}{%
\hat{c}})},  \label{bmw1} \\
A &=&\frac{(a+\hat{a}\hat{p}(s))(\hat{p}(u)-\hat{p}(t))(\hat{p}(u)\hat{p}(t)%
\frac{\hat{a}}{a}+\hat{p}(u)+\hat{p}(t)\frac{\kappa -\lambda }{\kappa }+%
\frac{c}{\hat{c}})}{(a+\hat{a}\hat{p}(t))(\hat{p}(u)-\hat{p}(s))(\hat{p}(s)%
\hat{p}(u)\frac{\hat{a}}{a}+\hat{p}(u)+\hat{p}(s)\frac{\kappa -\lambda }{%
\kappa })+\frac{c}{\hat{c}})},  \label{bmw2} \\
C &=&\frac{(a+\hat{a}\hat{p}(u))(\hat{p}(t)-\hat{p}(s))(\hat{p}(t)\hat{p}(s)%
\frac{\hat{a}}{a}+\hat{p}(s)\frac{\kappa -\lambda }{\kappa }+\hat{p}(t)+%
\frac{c}{\hat{c}})}{(a+\hat{a}\hat{p}(t))(\hat{p}(u)-\hat{p}(s))(\hat{p}(s)%
\hat{p}(u)\frac{\hat{a}}{a}+\hat{p}(u)+\hat{p}(s)\frac{\kappa -\lambda }{%
\kappa }+\frac{c}{\hat{c}})}.  \label{bmw3}
\end{eqnarray}
\end{corollary}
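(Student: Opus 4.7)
The corollary is a direct algebraic specialization of formulas (\ref{bb}), (\ref{aa}), (\ref{cc}) from Lemma~\ref{quadrat} under the three case distinctions on $\kappa$ and $\lambda$. No new probabilistic input is needed; the work is entirely bookkeeping in three parallel cases, together with the observation that (\ref{de}) for $D$ and $E$ is inherited unchanged from Lemma~\ref{quadrat}.

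The plan for case i) ($\kappa=0$, $\lambda\neq 0$) is to substitute $\kappa=0$ into the rational expressions for $B$, $A$, $C$. In each denominator the large inner factor collapses to $-\hat{p}(s)\,a\hat{c}\,\lambda$, since the three $\kappa$-terms vanish and only $\hat{p}(s)a\hat{c}(\kappa-\lambda)$ survives. In the numerator of $B$ the factor $a\hat{c}\lambda$ then cancels (with a sign), yielding the stated identity. For $A$ the numerator inner factor reduces analogously to $-\hat{p}(t)a\hat{c}\lambda$, and for $C$ to $+\hat{p}(s)a\hat{c}\lambda$; after cancellation the ratios involving $\hat{p}(t)/\hat{p}(s)$ and the factors $(a+\hat{a}\hat{p}(\cdot))$ emerge directly.

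The plan for case ii) ($\kappa\neq 0$, $\lambda=0$) begins by noting that the numerator of $B$ in (\ref{bb}) carries an explicit factor $\lambda$, so $B=0$ immediately. For $A$ and $C$ one substitutes $\lambda=0$, which extracts a common $\kappa$ from numerator and denominator. The remaining simplification rests on the factoring identity
\[
\hat{p}(u)\hat{p}(t)\hat{a}\hat{c}+\hat{p}(u)a\hat{c}+\hat{p}(t)a\hat{c}+ac=(a+\hat{a}\hat{p}(u))(c+\hat{c}\hat{p}(t)),
\]
which holds precisely when $a\hat{c}=\hat{a}c$, i.e.\ when $\lambda=0$; applied in the denominator (with $\hat{p}(s)$ replacing $\hat{p}(t)$) it produces the analogous factorization. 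The factor $(a+\hat{a}\hat{p}(u))$ cancels, and the relation $a\hat{c}=\hat{a}c$ also gives $(a+\hat{a}\hat{p}(s))(c+\hat{c}\hat{p}(t))=(a+\hat{a}\hat{p}(t))(c+\hat{c}\hat{p}(s))$, eliminating the remaining $(c+\hat{c}\hat{p}(\cdot))$ factors. What is left is the clean ratio $(\hat{p}(u)-\hat{p}(t))/(\hat{p}(u)-\hat{p}(s))$ for $A$ and the analogous expression for $C$.

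The plan for case iii) is simply to divide both the numerator and denominator of each of (\ref{bb}), (\ref{aa}), (\ref{cc}) by $\kappa$, which is permissible since $\kappa\neq 0$; this reads off the first set of stated formulas. If moreover $a\hat{c}\neq 0$, dividing once more by $a\hat{c}$ yields (\ref{bmw1})--(\ref{bmw3}). The main obstacle across the three cases is the factoring step in case ii); all other manipulations are elementary cancellation, guided by tracking how the $\kappa$- and $\lambda$-terms interact in numerator and denominator.
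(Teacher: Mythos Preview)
Your approach is correct and coincides with what the paper does: the corollary is stated without a separate proof and is obtained by direct substitution of the special values of $\kappa$ and $\lambda$ into formulas (\ref{bb})--(\ref{cc}) of Lemma~\ref{quadrat}, together with the elementary factoring identity you isolate in case~ii). There is no additional argument in the paper beyond this bookkeeping.
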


\begin{remark}
\label{ogran}Assume that $a\hat{c}\neq 0$ and that $\hat{p}%
(u)\longrightarrow \infty $ if $u\longrightarrow \infty $ and $\hat{p}\left(
u\right) \longrightarrow 0$ if $u\longrightarrow l$ where $l$ is defined by $%
I\allowbreak =\allowbreak \lbrack l,\infty ).$ Then following argument use d
by Bryc et. al. in \cite{BryMaWe07}, p. 5456 we can deduce that $\frac{%
\kappa -\lambda }{\kappa }\allowbreak \leq \allowbreak 1+2\sqrt{\hat{a}c/(a%
\hat{c})}.$
\end{remark}

\begin{corollary}
\label{bryc}In \cite{BryMaWe07} Bryc et al. considered Markov processes $%
\mathbf{X\allowbreak =\allowbreak (}X_{t})_{t\geq 0}$ that start from zero
at zero, are defined on $[0,\infty )$. Moreover these processes are assumed
to satisfy the following normalizing conditions: 1) $EX_{t}=0,$ 2) $\limfunc{%
var}(X_{t})\allowbreak =\allowbreak t,$ $\limfunc{cov}(X_{t},X_{s})%
\allowbreak =\allowbreak \min (s,t),$ 3) $E(X_{t}|\mathcal{F}_{\leq
s})\allowbreak =\allowbreak X_{s}$ and $E(X_{t}^{2}|\mathcal{F}_{\leq
s})\allowbreak =\allowbreak X_{s}^{2}+t-s$. for all $s\leq t$. From these
assumptions we deduce that $p_{1}(x;t)\allowbreak =\allowbreak x,$ $\hat{p}%
(t)\allowbreak =\allowbreak \hat{p}_{1}(t)\allowbreak =\allowbreak t,$ $%
\alpha _{1}(t)\allowbreak =\allowbreak 1$. Moreover since $\hat{p}(t)$
ranges from $0$ to infinity and we have conditions : $0\leq \alpha
_{2}(t)\gamma _{1}(t)\allowbreak =\allowbreak (a+\hat{a}t)(c+\hat{c}t);$ $%
0<\alpha _{2}(t)\allowbreak =\allowbreak a+\hat{a}t$ we deduce that $a>0,$ $%
ac\geq 0,$ $\hat{a}\hat{c}\geq 0$ and $ac\allowbreak \allowbreak
+\allowbreak \hat{a}\hat{c}>0.\allowbreak $ Besides we have (basing on our
assumptions) $\gamma _{1}(t)\hat{p}(t)\allowbreak =\allowbreak \alpha _{2}(t)%
\hat{p}_{2}(t)$ which leads to the relationship: $\allowbreak (c+\hat{c}%
t)t=\allowbreak \hat{p}_{2}(t)(a+\hat{a}t)$. If $\hat{c}\allowbreak
=\allowbreak 0$ then $\hat{p}_{2}(t)$ could not be increasing. Hence we
deduce that $\hat{c}>0$. Thus examining equation (\ref{bmw1})-(\ref{bmw3})
we deduce that there are $3$ independent parameters $\hat{a}/a$ which Bryc
at al. in \cite{BryMaWe07} called $\sigma ,$ $c/\hat{c}$ which was called $%
\tau $ in and $\frac{\kappa -\lambda }{\kappa }$ which was denoted $-q$ in 
\cite{BryMaWe07}. Notice that $\frac{\lambda \allowbreak }{\kappa }%
=\allowbreak 1+q$ in this notation. Following Proposition \ref{poczatek},iv)
we deduce that $EX_{t}^{3}\allowbreak =\allowbreak \beta _{1}(t)t\allowbreak
=\allowbreak (b+\hat{b}t)t$. On the other hand considering \cite{BryMaWe07}%
,(4.13) for $x\allowbreak =\allowbreak X_{t},$ then multiplying both sides
of so obtained expression $by$ $X_{t}$, taking expectation of both sides and
on the way making use of orthogonality $p_{2}$ and $p_{1}$ we can calculate $%
EX_{t}^{3}$ in the setting of \cite{BryMaWe07} obtaining 
\begin{equation*}
EX_{t}^{3}\allowbreak =\allowbreak \frac{(\eta +\sigma \theta )t^{2}+(\eta
\tau +\theta )t}{1-\sigma \tau }.
\end{equation*}%
Comparing these two expression for $EX_{t}^{3}$ we get interpretation of our
parameters $b$ and $\hat{b}$ in terms of $\sigma ,$ $\tau $, $q$ and other
two parameters used by Bryc et. al. i.e. $\eta $ and $\theta .$ Besides
since $\hat{p}_{2}(t)\allowbreak =\allowbreak \frac{t(c+\hat{c}t)}{a+\hat{a}t%
}\allowbreak =\allowbreak \frac{t(\tau +t)}{1+\sigma t}$ is to be increasing
we get (by calculating derivative) that for all $t\geq 0:$ $\sigma
t^{2}+2t+\tau \geq 0$. Consequently that $\sigma ,\tau \geq 0$ and $%
q\allowbreak \leq \allowbreak 1+2\sqrt{\sigma \tau }$ by the Remark \ref%
{ogran}.
\end{corollary}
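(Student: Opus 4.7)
The strategy is to translate each normalizing assumption of \cite{BryMaWe07} into the orthogonal-polynomial-martingale framework developed above and then read off the parameter correspondences. The approach is mostly a bookkeeping exercise: identify $p_1,\hat p,\alpha_1$ first, then extract the constraints on the six numbers $a,\hat a,b,\hat b,c,\hat c$, and finally compare the resulting formulae with those in \cite{BryMaWe07}.

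First I would pin down the low-order data. Since $EX_t=0$, Proposition \ref{poczatek}(i) forces $\beta_0(t)=0$ and $p_1(x;t)=x/\alpha_1(t)$. The assumption $E(X_t|\mathcal{F}_{\le s})=X_s$ means $X_t$ itself is a martingale, so together with the martingale property of $p_1$, this forces $\alpha_1$ to be constant, and the natural normalization gives $\alpha_1(t)\equiv 1$, hence $p_1(x;t)=x$. Proposition \ref{poczatek}(ii) together with $\mathrm{var}(X_t)=t$ then gives $\hat p(t)=t$.

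Next I would derive the sign constraints. Substituting $\alpha_2(t)=a+\hat a t$ and $\gamma_1(t)=c+\hat c t$ (from the harness parametrization of Theorem \ref{lin_har}) into $\alpha_2(t)\gamma_1(t)>0$ for all $t\in[0,\infty)$, evaluated at $t=0$ and in the limit $t\to\infty$, yields $a>0$, $ac\ge 0$, $\hat a\hat c\ge 0$ and $ac+\hat a\hat c>0$. From Proposition \ref{poczatek}(iii) one obtains $\hat p_2(t)=\gamma_1(t)\hat p(t)/\alpha_2(t)=t(c+\hat c t)/(a+\hat a t)$; if $\hat c=0$ this would be bounded as $t\to\infty$, contradicting the strict monotonicity of the square-bracket function $\hat p_2$, so $\hat c>0$.

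Third, I would identify the parameter family. Dividing numerator and denominator of \eqref{bmw1}--\eqref{bmw3} by $a\hat c>0$ makes it transparent that $A,B,C$ depend only on the three ratios $\sigma:=\hat a/a$, $\tau:=c/\hat c$ and $-q:=(\kappa-\lambda)/\kappa$, matching the notation of \cite{BryMaWe07}. To express $b,\hat b$ in terms of $\eta,\theta$ I would compute $EX_t^3$ in two ways. On one hand, Proposition \ref{poczatek}(iv) with $\beta_0=0$ and $\beta_1(t)=b+\hat b t$ yields $EX_t^3=(b+\hat b t)t$. On the other hand, substituting $x=X_t$ into \cite{BryMaWe07}, (4.13), multiplying by $X_t$, taking expectation and using orthogonality of $p_1(X_t;t)$ and $p_2(X_t;t)$ produces the alternative expression $EX_t^3=((\eta+\sigma\theta)t^2+(\eta\tau+\theta)t)/(1-\sigma\tau)$. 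Matching the coefficients of $t$ and $t^2$ then gives $b,\hat b$ in terms of $\sigma,\tau,\eta,\theta$.

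Finally, I would derive the inequalities on $\sigma,\tau,q$. Differentiating $\hat p_2(t)=t(\tau+t)/(1+\sigma t)$ shows that $\hat p_2'(t)\ge 0$ on $[0,\infty)$ is equivalent to $\sigma t^2+2t+\tau\ge 0$ for all $t\ge 0$; evaluating at $t=0$ gives $\tau\ge 0$ and letting $t\to\infty$ gives $\sigma\ge 0$. The bound on $q$ then follows directly from Remark \ref{ogran} with $\hat a c/(a\hat c)=\sigma\tau$. The only step with any analytical bite is the cubic-moment identification, which requires careful algebraic manipulation of \cite{BryMaWe07}, (4.13); everything else is direct substitution.
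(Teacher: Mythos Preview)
Your proposal is correct and follows essentially the same approach as the paper: the corollary's argument is embedded in its statement, and you reproduce that argument step by step---identifying $p_1,\hat p,\alpha_1$ from the normalizing conditions via Proposition~\ref{poczatek}, reading off the sign constraints on $a,\hat a,c,\hat c$ from the positivity of $\alpha_2(t)\gamma_1(t)$ on $[0,\infty)$, matching the three parameters $\sigma,\tau,q$ to the form of \eqref{bmw1}--\eqref{bmw3}, computing $EX_t^3$ two ways to interpret $b,\hat b$, and invoking Remark~\ref{ogran} for the bound on $q$. Your organization is slightly cleaner, but there is no substantive difference in method.
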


\begin{theorem}
\label{main}Let $\mathbf{X=(}X_{t})_{t\in I}$ be a Markov process with
orthogonal polynomial martingales $\left\{ p_{n}(x;t)\right\} _{n\geq -1}$
and $\mu (dx,t)$ and $\eta (dx,t;y,s)$ as one dimensional and transitional
distributions satisfying Condition \ref{sq_int}. Then $\mathbf{X}$ is a
quadratic harness iff 3-term recurrence satisfied by polynomials $p_{n}$ is
given by (\ref{w_o}) with coefficients $\alpha _{n}(t),$ $\beta _{n}(t)$, $%
\gamma _{n}(t)$ defined by (\ref{_al}), (\ref{_be}) and (\ref{_gam}) with
the system of $6$ number sequences $\left\{ a_{n},\hat{a}_{n},b_{n},\hat{b}%
_{n},c_{n},\hat{c}_{n}\right\} $ that satisfy the following system of $5$
recursive equations:%
\begin{gather}
\kappa (\hat{a}\hat{c}a_{n+1}a_{n+2}\allowbreak +\allowbreak \hat{a}_{n+1}%
\hat{a}_{n+2}ac-a\hat{c}a_{n+1}\hat{a}_{n+2})\allowbreak =(\kappa -\lambda )a%
\hat{c}\hat{a}_{n+1}a_{n+2},  \label{e1} \\
\kappa (\hat{a}\hat{c}c_{n-2}c_{n-1}-a\hat{c}\hat{c}_{n-2}c_{n-1}+ac\hat{c}%
_{n-2}\hat{c}_{n-1})=(\kappa -\lambda )a\hat{c}c_{n-2}\hat{c}_{n-1},
\label{e2} \\
\kappa (ac\hat{a}_{n+1}(\hat{b}_{n+1}+\hat{b}_{n})+\hat{a}\hat{c}%
a_{n+1}(b_{n+1}+b_{n})-(\hat{b}c-b\hat{c})a\hat{a}_{n+1}-(\hat{a}b-a\hat{b})%
\hat{c}a_{n+1})  \label{e3} \\
=a\hat{c}((\kappa -\lambda )(\hat{a}_{n+1}b_{n+1}\allowbreak +\allowbreak
a_{n+1}\hat{b}_{n})+\kappa (a_{n+1}\hat{b}_{n+1}+\hat{a}_{n+1}b_{n})),
\label{e3_1} \\
\kappa (\hat{a}\hat{c}c_{n-1}(b_{n}+b_{n-1})+\allowbreak ac\hat{c}_{n-1}(%
\hat{b}_{n}+\hat{b}_{n-1})-\allowbreak \allowbreak (\hat{b}c-b\hat{c})a\hat{c%
}_{n-1}-(\hat{a}b-a\hat{b})\hat{c}c_{n-1})  \label{e4} \\
=a\hat{c}((\kappa -\lambda )(c_{n-1}\hat{b}_{n}+\hat{c}_{n-1}b_{n-1})+\kappa
(\hat{c}_{n-1}b_{n}+c_{n-1}\hat{b}_{n-1})),  \label{e4_1} \\
\kappa (ac(\hat{a}_{n+1}\hat{c}_{n}+\hat{a}_{n}\hat{c}_{n-1}+\hat{b}_{n}(%
\hat{b}_{n}-\hat{b}))\allowbreak +\allowbreak \hat{a}\hat{c}%
(a_{n+1}c_{n}+a_{n}c_{n-1}+b_{n}(b_{n}-b))+a\hat{c})=  \label{e5} \\
a\hat{c}((\kappa -\lambda )(\hat{a}_{n+1}c_{n}+a_{n}\hat{c}_{n-1}+b_{n}\hat{b%
}_{n})+\kappa (a_{n+1}\hat{c}_{n}+\hat{a}_{n}c_{n-1}-b\hat{b}+(b_{n}-b)(\hat{%
b}_{n}-\hat{b})),  \label{e5_1}
\end{gather}%
with initial conditions : $a_{1}\allowbreak =\allowbreak a,$ $\hat{a}%
_{1}\allowbreak =\allowbreak \hat{a},$ $b_{1}\allowbreak =\allowbreak b,$ $%
\hat{b}_{1}\allowbreak =\allowbreak \hat{b},$ $c_{1}\allowbreak =\allowbreak
c$ and $\hat{c}_{1}\allowbreak =\allowbreak \hat{c},$ and such that $\forall
t\in I,$ $n\geq 0:$ $(a_{n}+\hat{a}_{n}\hat{p}(t))(c_{n}+\hat{c}_{n}\hat{p}%
(t))>0.$
\end{theorem}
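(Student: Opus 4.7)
The plan is to characterize the quadratic harness property within the OPM class by combining Lemma \ref{quadrat} with the bilinear expansion (\ref{trans}) and the three-term recurrence (\ref{w_o}). Since every quadratic harness is in particular a harness, Theorem \ref{lin_har} already forces the coefficients $\alpha_n(t),\beta_n(t),\gamma_n(t)$ to take the form (\ref{_al})--(\ref{_gam}) with the stated initial conditions $a_1=\hat{c}_0=1$ and $b_0=\hat{b}_0=\hat{a}_0=a_0=\hat{a}_1=c_0=0$. The task thus reduces to translating the extra condition (\ref{QH}) into the algebraic system (\ref{e1})--(\ref{e5_1}) on the six sequences.

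For the forward direction, I would multiply (\ref{QH}) by $p_j(X_s;s)p_k(X_u;u)$ and take expectation. Using the Markov property together with (\ref{trans}) and the orthogonality (\ref{Ort}), the left-hand side collapses to
\begin{equation*}
E\bigl(p_2(X_t;t)p_j(X_s;s)p_k(X_u;u)\bigr)=\frac{\hat{p}_j(s)}{\hat{p}_j(t)}\int p_2(x;t)p_j(x;t)p_k(x;t)\,\mu(dx,t),
\end{equation*}
while the right-hand side, after inserting the explicit $A,B,C,D,E,F$ from Lemma \ref{quadrat} and reducing mixed moments $E(p_1(X_s;s)p_1(X_u;u)\cdots)$ via the forward martingale property $E(p_m(X_u;u)\,|\,X_s)=p_m(X_s;s)$ and the reverse martingale property from the corollary after Theorem \ref{expansion}, expresses everything in terms of $\pi(j,k,2;t):=\int p_2p_jp_k\,\mu(dx,t)$ at the three times. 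Using the three-term recurrence (\ref{w_o}) at each time point to expand the products $p_1p_m$ and $p_2p_m$ in the $\{p_n\}$ basis, substituting (\ref{_al})--(\ref{_gam}) so that the coefficients split into their $\alpha(t)$ and $\gamma_0(t)=\alpha(t)\hat{p}(t)$ components, and matching the $s,t,u$ dependence, the required identity collapses to polynomial relations in $\hat{p}(s),\hat{p}(t),\hat{p}(u)$ among the six sequences, indexed by $n$. Choosing five independent test pairs $(j,k)$ -- naturally the ones that isolate the coefficients $A,B,C,D,E$ in turn -- produces precisely the five equations (\ref{e1})--(\ref{e5_1}), each with its characteristic index shift coming from the three-term recurrence applied to $p_n$, $p_{n\pm 1}$, $p_{n\pm 2}$.

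The converse follows by reversing the same computation: assuming the recursion, one verifies by induction on $n$ that the bilinear form of $E(p_2(X_t;t)\,|\,\mathcal{F}_{s,u})$ demanded by (\ref{QH}) is consistent with all test moments $E(p_2(X_t;t)p_j(X_s;s)p_k(X_u;u))$ computed via (\ref{trans}); since the polynomials $\{p_j(X_s;s)p_k(X_u;u)\}_{j,k\ge 0}$ are dense in $L_2$ of the joint distribution of $(X_s,X_u)$ by Condition \ref{sq_int}, this is enough.

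The main obstacle is organizational rather than conceptual: one must simultaneously juggle the three-term recurrence at three times $s,t,u$, products of up to three orthogonal polynomials, the forward and reverse martingale identities, and the substitution (\ref{_al})--(\ref{_gam}) which turns each coefficient into a two-parameter pair. A secondary obstacle is that the explicit form of $A,B,C,D,E$ given in Lemma \ref{quadrat} splits according to whether $\kappa=\lambda=0$ or $\kappa^2+\lambda^2>0$; when writing out the matching one should either work with the common formulation (\ref{bb})--(\ref{cc}) uniformly or verify the limiting $\kappa=\lambda=0$ case separately to confirm that the recursive system (\ref{e1})--(\ref{e5_1}) captures both regimes without spurious degeneracies.
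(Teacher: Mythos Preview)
Your approach is essentially the paper's: both routes pass through the bilinear kernel expansion (\ref{trans}), the explicit coefficients $A,B,C,D,E,F$ from Lemma \ref{quadrat}, and the three-term recurrence, and both reduce (\ref{QH}) to a family of scalar identities indexed by $n$. The paper organizes this slightly differently: instead of testing (\ref{QH}) against $p_j(X_s;s)p_k(X_u;u)$, it writes $E(p_2(X_t;t)\mid X_s=y,X_u=z)$ explicitly as a double sum via the kernel, multiplies by $p_m(z;u)$, integrates in $z$, and then reads off the coefficients of $p_{m-2}(y;s),\ldots,p_{m+2}(y;s)$. This is dual to your moment-testing with the pairs $(j,k)=(m+i,m)$, $i\in\{-2,\ldots,2\}$.

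One point to sharpen: the five equations do \emph{not} arise by choosing test pairs that ``isolate $A,B,C,D,E$ in turn''. They arise because the product $p_2p_n$ has exactly five nonzero coefficients $r_{i,n+i}(t)$, $i\in\{-2,\ldots,2\}$ (the paper records these in Proposition \ref{pomoc}), so for each $m$ there are exactly five nontrivial identities, one per offset. Each of these five identities, after substituting (\ref{_al})--(\ref{_gam}) and the formulae for $A,B,C,D,E,F$, becomes a rational expression in $\hat p(s),\hat p(t),\hat p(u)$ whose numerator factors as a time-dependent piece times a purely numerical combination of the sequences; the paper performs this factorization with Mathematica and sets the numerical factors to zero, yielding (\ref{e1})--(\ref{e5_1}). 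Your plan is correct but you should expect the bookkeeping at this last step to be where the real work lies, and you should index the resulting equations by the offset $i$, not by which of $A,\ldots,E$ they ``isolate''.
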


\begin{proof}
Long tedious proof is shifted to Section \ref{dowody}.
\end{proof}

\begin{example}
Let us consider three examples. The $q-$Wiener and $(\alpha ,q)-$%
Ornstein--Uhlenbeck processes described above and the Poisson process.

Let us start with $q-$Wiener process. Following what was presented in
Subsection \ref{example} we have $p_{n}(x;t)\allowbreak =\allowbreak
t^{n/2}H_{n}(x/\sqrt{t})$. Hence following (\ref{He}) we have:%
\begin{equation*}
xp_{n}(x;t)=p_{n+1}(x;t)+t[n]_{q}p_{n-1}(x;t).
\end{equation*}%
Besides $\hat{p}(t)\allowbreak =\allowbreak \hat{p}_{1}(t)\allowbreak
=\allowbreak Ep_{1}^{2}(X_{t};t)\allowbreak =\allowbreak t$. So $\alpha
_{n+1}(t)\allowbreak =\allowbreak 1+0t$ and thus $a_{n}\allowbreak
=\allowbreak 1,$ $\hat{a}_{n}\allowbreak =\allowbreak 0$. Further $\beta
_{n}(t)\allowbreak =\allowbreak 0$ so $b_{n}\allowbreak =\allowbreak \hat{b}%
_{n}\allowbreak =\allowbreak 0$. Finally $\gamma _{n-1}(t)\allowbreak
=\allowbreak t[n]_{q},$ so $c_{n}\allowbreak =\allowbreak 0,$ $\hat{c}%
_{n}\allowbreak =\allowbreak \lbrack n+1]_{q}$. Consequently values of
parameters are $a\allowbreak =\allowbreak a_{1},$ $\hat{a}\allowbreak
=\allowbreak \hat{a}_{1}\allowbreak =\allowbreak 0,$ $b=\hat{b}\allowbreak
=\allowbreak 0,$ $c=c_{1}\allowbreak =\allowbreak 0,$ $\hat{c}\allowbreak
=\allowbreak c_{1}\allowbreak =\allowbreak 1+q,$ $\kappa \allowbreak
=\allowbreak 1+b\hat{b}+\hat{a}c\allowbreak =\allowbreak 1,$ $\lambda
\allowbreak =\allowbreak a\hat{c}-\hat{a}c\allowbreak =\allowbreak 1+q,$ $%
\kappa -\lambda \allowbreak =\allowbreak -q$. Functions $A,$ $B,$ $C,$ $D,$ $%
E,$ $F$ are now%
\begin{eqnarray*}
A\allowbreak &=&\allowbreak \frac{(u-t)(u-qt)}{(u-s)(u-qs)},~B=\frac{%
(1+q)(t-s)(u-t)}{(u-s)(u-qs)}, \\
C &=&\frac{(t-s)(t-qs)}{(u-s)(u-qs)},~D\allowbreak =\allowbreak 0,~E=0,F=-sB.
\end{eqnarray*}%
One can also check that equations (\ref{e1}), (\ref{e2}), (\ref{e3}), (\ref%
{e3_1}), (\ref{e4}), (\ref{e4_1}) are trivially satisfied since $b=\hat{b}%
\allowbreak =\allowbreak 0$ either $\hat{a}=\allowbreak 0$ or $c\allowbreak
=\allowbreak 0$ and $\hat{a}_{n}\allowbreak =\allowbreak 0,$ $%
c_{n}\allowbreak =\allowbreak 0$. To check the equation (\ref{e5}) (\ref%
{e5_1}) we have:%
\begin{equation*}
(1+q)\allowbreak =\allowbreak (1+q)(-q[n]_{q}+[n+1]_{q}),
\end{equation*}%
which is true. Hence we deduce that $q-$Wiener process is a quadratic
harness.

Now let us analyze $(\alpha ,q)-$OW process. Following what was presented in
Subsection \ref{example} we have $p_{n}(x;t)\allowbreak =\allowbreak \exp
(n\alpha t)H_{n}(x|q)$. Hence following (\ref{He}) we have:%
\begin{equation*}
xp_{n}(x;t)=e^{-\alpha t}p_{n+1}(x;t)+e^{\alpha t}[n]_{q}p_{n-1}(x;t).
\end{equation*}%
In particular $\alpha _{1}(t)\allowbreak =\allowbreak e^{-\alpha t},$ $%
\gamma _{0}(t)\allowbreak =\allowbreak e^{\alpha t}$. Besides $\hat{p}\left(
t\right) \allowbreak =\allowbreak \hat{p}_{1}(t)\allowbreak =\allowbreak
Ep_{1}^{2}(X_{t};t)\allowbreak =\allowbreak e^{2\alpha t}\allowbreak
=\allowbreak \gamma _{0}(t)/\alpha _{1}(t)$. Following equations (\ref{_be}%
), (\ref{_al}) and (\ref{_gam}) we have $\alpha _{n+1}(t)\allowbreak
=\allowbreak \allowbreak e^{-\alpha t}\allowbreak =\allowbreak 1\alpha
_{1}(t)+0\gamma _{0}(t)\allowbreak $ so $a_{n}\allowbreak =\allowbreak 1$
and $\hat{a}_{n}\allowbreak =\allowbreak 0,$ $\gamma _{n-1}(t)\allowbreak
=\allowbreak e^{\alpha t}[n]_{q}\allowbreak =\allowbreak 0\alpha
_{1}(t)\allowbreak +\allowbreak \lbrack n]_{q}\allowbreak \gamma _{0}(t),$
so $c_{n}\allowbreak =\allowbreak 0$ and $\hat{c}_{n}\allowbreak
=\allowbreak \lbrack n+1]_{q}$. Of course like before $b\allowbreak
=\allowbreak \hat{b}\allowbreak =\allowbreak b_{n}\allowbreak =\allowbreak 
\hat{b}_{n}\allowbreak =\allowbreak 0$. Functions $A,$ $B,$ $C,$ $D,$ $E,$ $%
F $ are now%
\begin{eqnarray*}
A\allowbreak &=&\allowbreak \frac{(e^{2\alpha u}-e^{2\alpha t})(e^{2\alpha
u}-qe^{2\alpha t})}{(e^{2\alpha u}-e^{2\alpha s})(e^{2\alpha u}-qe^{2\alpha
s})},~B=\frac{(1+q)(e^{2\alpha t}-e^{2\alpha s})(e^{2\alpha u}-e^{2\alpha t})%
}{(e^{2\alpha u}-e^{2\alpha s})(e^{2\alpha u}-qe^{2\alpha s})}, \\
C &=&\frac{(e^{2\alpha t}-e^{2\alpha s})(e^{2\alpha t}-qe^{2\alpha s})}{%
(e^{2\alpha u}-e^{2\alpha s})(e^{2\alpha u}-qe^{2\alpha s})},~D\allowbreak
=\allowbreak 0,~E=0,F=-e^{2\alpha s}B.
\end{eqnarray*}%
Like in the case of $q-$Wiener process equations (\ref{e1}), (\ref{e2}), (%
\ref{e3}), (\ref{e3_1}), (\ref{e4}), (\ref{e4_1}) are trivially satisfied
since $b=\hat{b}\allowbreak =\allowbreak 0$ either $\hat{a}=\allowbreak 0$
or $c\allowbreak =\allowbreak 0$ and $\hat{a}_{n}\allowbreak =\allowbreak 0,$
$c_{n}\allowbreak =\allowbreak 0$. Equation (\ref{e5}) (\ref{e5_1}) has the
same form as in the $q-$Wiener process. Hence we deduce that $(\alpha ,q)-$%
OW process is a quadratic harness.

Let us now turn our attention to the Poisson process with parameter $\mu $.
Following \cite{Koek} (section 1.12) we deduce that $p_{n}(x;t)\allowbreak
=\allowbreak (-\mu t)^{n}C_{n}(x;\mu t),$ where $C_{n}$ denotes Charlier
polynomial as defined by (\cite{Koek}, 1.12.1). Moreover following \cite%
{Koek}, (1.12.4) we have%
\begin{equation*}
xp_{n}(x;t)=p_{n+1}(x;t)+(n+\mu t)p_{n}(x;t)+n\mu tp_{n-1}(x;t).
\end{equation*}%
Besides following \cite{Koek}(1.12.2) we deduce that $\hat{p}\left( t\right)
\allowbreak =\allowbreak \hat{p}_{1}(t)\allowbreak =\lambda t$. Thus
parameters are the following: $a\allowbreak =\allowbreak 1,$ $\hat{a}%
\allowbreak =\allowbreak 0,$ $b\allowbreak =\allowbreak 1,$ $\hat{b}%
\allowbreak =\allowbreak 0,$ $c\allowbreak =\allowbreak 0,$ $\hat{c}%
\allowbreak =\allowbreak 2,$ $\kappa =1,$ $\lambda \allowbreak =\allowbreak
2 $ since parameters $b$ and $\hat{b}$ are defined by $\beta
_{1}(t)\allowbreak -\allowbreak \beta _{0}(t)\allowbreak =\allowbreak
b\allowbreak +\allowbreak \hat{b}\hat{p}\left( t\right) $. Following
equations (\ref{_be}), (\ref{_al}) and (\ref{_gam}) we have $\alpha
_{n+1}(t)\allowbreak =\allowbreak 1+0\mu t,$ $\beta _{n}(t)\allowbreak
=\allowbreak n,$ $\gamma _{n-1}(t)\allowbreak =\allowbreak n\mu t$ and thus
consequently $a_{n}\allowbreak =\allowbreak 1,$ $\hat{a}_{n}\allowbreak
=\allowbreak 0,$ $b_{n}\allowbreak =\allowbreak n,$ $\hat{b}_{n}\allowbreak
=\allowbreak 0,$ $c_{n}\allowbreak =\allowbreak 0,$ $\hat{c}_{n}\allowbreak
=\allowbreak n+1$. Functions $A,$ $B,$ $C,$ $D,$ $E,$ $F$ are now%
\begin{eqnarray*}
A\allowbreak &=&\allowbreak \frac{(u-t)^{2}}{(u-s)^{2}},~B=\frac{2(u-t)(t-s)%
}{(u-s)^{2}}, \\
C &=&\frac{(t-s)^{2}}{(u-s)^{2}},~D\allowbreak =\allowbreak -B,~E=-sB,F=-sB.
\end{eqnarray*}%
Thus we deduce that Poisson process is a harness. It is not new result since
this fact was already known to Jacod et.al. as shown in \cite{Jaco88}. As
far as the property of being a quadratic harness is concerned we deduce that
equations (\ref{e1}), (\ref{e2}), (\ref{e3}), (\ref{e3_1}) are trivially
satisfied since either $\hat{a}=\allowbreak 0$ or $c\allowbreak =\allowbreak
0,$ $\hat{a}_{n}\allowbreak =\allowbreak 0,$ $c_{n}\allowbreak =\allowbreak
0 $. As far as equations (\ref{e4}), (\ref{e4_1}) is concerned we on the
left hand sides we have $2n(n-n+1)$ while on the right hand side we get $2n$%
. Finally equation (\ref{e5}) (\ref{e5_1}) leads to the following identity: $%
2\allowbreak =\allowbreak 2(-n+n+1)$. hence we deduce that process $\left\{
Z_{t}\right\} _{t\geq 0}$ is a quadratic harness. This confirms result of 
\cite{BryMaWe07}.
\end{example}

\begin{remark}
Let us remark that in recent years many more examples of quadratic harnesses
were found by Bryc et. al. To mention only \cite{BRWE12}, \cite{BryBo}, \cite%
{BryMaWe11} or \cite{SzabHar}.
\end{remark}

\section{Open problems and remarks\label{open}}

Quadratic harnesses have been defined for square integrable processes.
However majority of examples (e.g. in \cite{BryMaWe},\cite{BryMaWe07}, \cite%
{BryMaWe11}, \cite{BryWe}, \cite{BryWe10}) deal with quadratic harnesses
that have all moments and belong to the class OPM. Recently in \cite{BRWE12}
the authors showed that there exist quadratic harnesses that do not have all
moments.

There is an immediate interesting question are there $2-$harnesses that are
not harnesses? Or more generally are there $r+1-$harnesses that are not $r-$%
harnesses?

At first sight it seems that yes but I do not know any examples.

Recalling Theorem \ref{lin_har} we notice that condition of being a harness
defines $6$ number sequences $\{a_{n},\hat{a}_{n},b_{n},\hat{b}_{n},c_{n},%
\hat{c}_{n}\}_{n\geq 1}$. Further little reflection shows that being a
harness and $r-$harness would require more and more equations to be
satisfied by the same number of $6$ parameters sequences. When $r$ was equal 
$2$ we had $5$ conditions, and generally for any $r$ we would have $2r+1$
conditions to be satisfied by $6$ number sequences. Hence to be a $r-$%
harness would be more and more difficult. Are there processes that are
harnesses for every $r\in \mathbb{N}$ (let's call this property as being a
total harness)? The answer is yes. It turns out that not only classical
Wiener and Ornstein--Uhlenbeck process are total harnesses but the result
presented in \cite{SzablAW}, Theorem 2. can be interpreted in such a way
that also $q-$Wiener and $(\alpha ,q)-$OU processes are total harnesses.

Are there other total harnesses? Considering as $\left\{ X_{t}\right\}
_{t\geq 0}$ the Poisson process with parameter say equal to $\mu ,$ little
reflection leads to the conclusion that the conditional distribution of $%
X_{t}|X_{s}=k,X_{u}=n$ for $s<t<u$ is equal to the distribution of $k+Y$
where $Y\allowbreak \sim \allowbreak Bin(n-k,\frac{t-s}{u-s}),$ where $%
Bin(n,p)$ denotes binomial distribution with parameters $n\in \mathbb{N}$
and $p\in \lbrack 0,1]$. Following well known properties of binomial
distributions we see that $n-th$ moment of $k+Y$ is a polynomial of degree
at most $n$ of $k$ and $n$. Hence Poisson process is also a total harness.
But are they the only ones? What are the necessary and sufficient conditions
to be satisfied by $\{a_{n},\hat{a}_{n},b_{n},\hat{b}_{n},c_{n},\hat{c}%
_{n}\}_{n\geq 1}$ for being a total harness?

There are also questions connected with the Theorem \ref{expansion}. If we
know that transitional and marginal distributions are identifiable by
moments, satisfy Condition \ref{sq_int} and there exist family of
polynomials satisfying (\ref{Ort}) and (\ref{Mart}) then these distributions
must me interrelated by the formula (\ref{trans}). However not necessarily
conversely. In general expressions $\sum_{n\geq 0}\frac{1}{\hat{p}_{n}(t)}%
p_{n}(x,t)p_{n}(y,s)$ are not nonnegative on the support on the measure that
makes polynomials $\left\{ p_{n}\right\} $ orthogonal. For what families of
orthogonal polynomials $\left\{ p_{n}(x,t)\right\} $ with respect to $\mu
(.,t)$ this expressions are nonnegative on the support of $\mu ?$ Examples
presented above show that there exist nontrivial families of polynomials
having this property.

There is another interesting more general and more difficult question that
we do not know the answer for. Namely as it follows from Theorem \ref%
{expansion} there are at least as many OPM processes as there are marginal
measures $\mu (.,t),$ $t\in I$. Another words having family $\mu (.,t),$ $%
t\in I$ of probability measures we have family of orthogonal polynomials $%
p_{n}(x;t)$ and by Theorem \ref{expansion} we have transitional measure. In
general within MPR class we have for all $t\in I$ and $n\geq 1:$ 
\begin{equation*}
Ep_{n}(X_{t};t)|\mathcal{F}_{\leq s})=q_{n}(X_{s};s,t),
\end{equation*}%
where $q_{n}$ denotes polynomial of degree not exceeding $n$ and $p_{n}$ as
before orthogonal polynomial of marginal distribution $\mu $. In general $%
q_{n}(X_{s};s,t$ )$\allowbreak \neq \allowbreak p_{n}(X_{s};s)$ implying
that in general the transitional measure is not determined by the marginal
one completely. A relationship however exists. What is its nature? Can we
describe it?

\section{Proofs\label{dowody}}

\begin{proof}[Proof of Proposition \protect\ref{CTLI}]
i) Consider (\ref{podst}). By out assumption matrix $\mathbf{C}_{n}(t,s)$ is
non-singular hence both matrices $\mathbf{M}_{n}(s)$ and $\mathcal{A}%
_{n}(s,t)$ must be non-singular.

ii) By the tower property applied to $X_{u}^{n}$ we have :%
\begin{gather*}
\sum_{j=0}^{n}\gamma _{n,j}(s,u)X_{s}^{j}=E(X_{u}^{n}|\mathcal{F}_{\leq
s})=E(E(X_{u}^{n}|\mathcal{F}_{\leq t})|\mathcal{F}_{\leq s}) \\
=E(\sum_{j=0}^{n}\gamma _{n,j}(t,u)X_{t}^{j}|\mathcal{F}_{\leq
s})=\sum_{j=0}^{n}\gamma _{n,j}(t,u)\sum_{k=0}^{j}\gamma _{j,k}(s,t)X_{s}^{k}
\\
=\sum_{k=0}^{n}X_{s}^{k}\sum_{j=k}^{n}\gamma _{j,k}(s,t)\gamma _{n,j}(t,u).
\end{gather*}%
Comparing respective coefficients we get for $k\leq n:\gamma
_{n,k}(s,u)\allowbreak =\allowbreak \sum_{j=k}^{n}\gamma _{n,j}(t,u)\gamma
_{j,k}(s,t)$ this system of equations can written briefly in the matrix
form: $\mathcal{A}_{n}(t,u)\mathcal{A}_{n}(s,t).$

iii) We set say $s=\allowbreak 0$ in ii) and then by i) we get 
\begin{equation*}
\mathcal{A}_{n}(0,u)\mathcal{A}_{n}^{-1}(0,t)\allowbreak =\allowbreak 
\mathcal{A}_{n}(t,u),
\end{equation*}%
for all $t\leq u$. Thus it remains to define $V_{n}(u)\overset{df}{=}%
\mathcal{A}_{n}(0,u).$

iv) First of all let us notice that by MSC assumption diagonal entries of
matrix $\mathcal{A}_{n}(s,t)$ are positive for $s$ sufficiently close to $t$
and $s<t$. Secondly recall that diagonal elements of triangular matrix are
equal to its eigenvalues. Thirdly from the decomposition (\ref{fact}) it
follows that the product of diagonal entries of matrices $V_{n}$ and $%
V_{n}^{-1}$ are equal to diagonal elements of $\mathcal{A}_{n}(s,t)$. Hence
we can select diagonal entries of $V_{n}$ to be positive at least for $s<t$
close to $t$. On the other hand since by assumption elements of all matrices 
$V_{n},$ $\mathcal{A}_{n}(s,t)$ are continuous functions of $t$ they cannot
change sign since all matrices involved are non-singular.
\end{proof}

\begin{proof}[Proof of Corollary \protect\ref{ind_inc}]
First let us assume the process is with independent increments. Then from (%
\ref{fact}) it follows that 
\begin{equation}
\mathcal{A}_{n}(s,t)V_{n}(s)\allowbreak =\allowbreak V_{n}(t)  \label{eqn}
\end{equation}%
and we know from Proposition \ref{indep} that: 
\begin{equation*}
E(X_{t}^{n}|\mathcal{F}_{\leq s})=\sum_{j=0}^{n}\binom{n}{j}\gamma
_{n-j,0}(s,t)X_{s}^{j},
\end{equation*}%
since in this case $\gamma _{1,1}(s,t)\allowbreak =\allowbreak 1$ and $%
\gamma _{1,0}(s,t)\allowbreak =\allowbreak 0$. Hence the $i,j-$th entry of
the matrix $\mathcal{A}_{n}$ must be of the form $\binom{i}{j}\gamma
_{i-j,0}(s,t)$ where $\gamma _{i-j,0}$ is its $(i-j,0)-$th entry. Let us
denote $\gamma _{k,0}(s,t)$ by $\gamma _{k}(s,t)$ for brevity. Besides $i,i-$%
th entry of this matrix is equal to $1$. Assuming that diagonal entries of
matrix $V_{n}(s)$ are also $1$ we have to show that $v_{i,j}(t)\allowbreak
=\allowbreak \binom{i}{j}g_{i-j}(t)$ for $i>j$ for some functions $g_{k}(t)$%
. Secondly notice that $i,j-$th entry of the matrix equation (\ref{eqn})
takes the form 
\begin{equation}
\sum_{k=j}^{i}\binom{i}{k}\gamma _{i-k}(s,t)v_{k,j}(s)=v_{i,j}(t).
\label{pom}
\end{equation}

Besides since matrix $V_{N}$ is not defined uniquely we can take $%
V_{N}(0)\allowbreak =\allowbreak I_{N}$ -identity matrix. Then taking $%
j\allowbreak =\allowbreak i-1$ we see that $i\gamma _{1}(s,t)\allowbreak
=\allowbreak v_{i,i-1}(t)-v_{i,i-1}(s)$. Now we have also $\gamma
_{1}(s,t)\allowbreak =\allowbreak v_{1,0}(t)-v_{1,0}(s)$. Let us denote $%
v_{1,0}(t)\allowbreak =\allowbreak g_{1}(t)$. with $g_{1}(0)\allowbreak
=\allowbreak 0$. Comparing these two expressions we see that $\left(
v_{i,i-1}(t)-v_{i,i-1}(s)\right) /i\allowbreak =\allowbreak
g_{1}(t)-g_{1}(s).$ Hence we have proved that $v_{i,i-1}(t)\allowbreak
=\allowbreak \binom{i}{i-1}g_{1}(t)$. Further proof is by induction. Hence
assume that $v_{i,i-j}(t)\allowbreak =\allowbreak \binom{i}{j}g_{j}(t)$ for $%
j\leq m$. Let us take $j\allowbreak =\allowbreak i\allowbreak -m\allowbreak
\allowbreak -\allowbreak 1$. We have by the induction assumption:%
\begin{eqnarray*}
&&v_{i,i-m-1}(s)+\sum_{k=i-m-1}^{i-1}\binom{i}{i-k}\binom{k}{i-m-1}%
g_{k-i+m+1}(t)\gamma _{i-k}(s,t) \\
&=&v_{i,i-m-1}(s)+\binom{i}{m+1}\sum_{k=i-m-1}^{i-1}\binom{m+1}{i-k}%
g_{i-k}(t)\gamma _{k-i+m+1}(s,t)\allowbreak \\
&=&\allowbreak v_{i,i-m-1}(s)+\binom{i}{m+1}\sum_{s=0}^{m}\binom{m+1}{m-s}%
g_{m+1-s}(t)\gamma _{s}(s,t).
\end{eqnarray*}%
So $v_{i,i-m-1}(s)/\binom{i}{m+1}$ does not depend on $i$. So let us denote $%
v_{m+1,0}(s)\allowbreak =\allowbreak g_{m+1}(s)$. Consequently $%
v_{i,i-m-1}(s)\allowbreak =\allowbreak \binom{i}{m+1}g_{m+1}(s).$
\end{proof}

\begin{proof}[Proof of Theorem \protect\ref{lin_har}]
First let us notice that $\chi (dx,t|y,s,z,u)\allowbreak =\allowbreak \frac{%
\phi (x,t;y,s)\phi (z,u;x,t)}{\phi (z,u;y,s)}\mu (dx,t)$ is the conditional
distribution of $X_{t}|X_{s}=y,X_{u}=z$ for $s<t<u$. Let us find $%
E(X_{t}|X_{s}=y,X_{u}=z)$. We have using (\ref{trans}): 
\begin{equation*}
\chi (x,t|y,s,z,u)\allowbreak =\allowbreak \frac{1}{\allowbreak \phi
(z,u;y,s)}\mu (dx;t)\mu (dz;u)\sum_{j,k\geq 0}\frac{1}{\hat{p}_{j}(t)\hat{p}%
_{k}(u)}p_{j}(x;t)p_{j}(y,s)p_{k}(z;u)p_{k}(x;t).
\end{equation*}%
To perform our calculations swiftly let us notice that from (\ref{w_o}) it
follows that we have expansion 
\begin{equation}
p_{1}(x;t)p_{n}(x;t)=\hat{\alpha}_{n+1}(t)p_{n+1}(x;t)+\hat{\beta}%
_{n}(t)p_{n}(x;t)+\hat{\gamma}_{n-1}(t)p_{n-1}(x;t),  \label{3tr_mod}
\end{equation}%
where $\hat{\alpha}_{n+1}(t)\allowbreak =\allowbreak \alpha _{n+1}(t)/\alpha
_{1}(t);$ $\hat{\beta}_{n}(t)\allowbreak =\allowbreak (\beta _{n}(t)-\beta
_{0}(t))/\alpha _{1}(t);$ $\hat{\gamma}_{n-1}(t)\allowbreak =\allowbreak
\gamma _{n-1}(t)/\alpha _{1}(t)$. So we have 
\begin{gather*}
E(p_{1}(X_{t};t)|X_{s}=y,X_{u}=z)\allowbreak =\allowbreak \frac{1}{%
\sum_{n\geq 0}p_{n}(z;u)p_{n}(y,s)/\hat{p}_{n}(u)}\times \\
\sum_{j,k\geq 0}\frac{1}{\hat{p}_{j}(t)\hat{p}_{k}(u)}p_{j}(y,s)p_{k}(z;u)%
\int p_{1}(x;t)p_{j}(x;t)p_{k}(x;t)\mu (dx,t) \\
=\frac{1}{\sum_{n\geq 0}p_{n}(z;u)p_{n}(y,s)/\hat{p}_{n}(u)}\times \\
\sum_{j,k\geq 0}\frac{1}{\hat{p}_{j}(t)\hat{p}_{k}(u)}p_{j}(y,s)p_{k}(z;u)%
\int (\hat{\alpha}_{j+1}p_{j+1}(x;t)+\hat{\beta}_{j}(t)p_{j}(x;t)+\hat{\gamma%
}_{j-1}(t)p_{j-1}(x;t))p_{k}(x;t)\mu (dx,t).
\end{gather*}%
Let us calculate 
\begin{eqnarray*}
&&\sum_{j,k\geq 0}\frac{1}{\hat{p}_{j}(t)\hat{p}_{k}(u)}p_{j}(y,s)p_{k}(z;u)
\\
&&\times \int (\hat{\alpha}_{j+1}p_{j+1}(x;t)+\hat{\beta}_{j}(t)p_{j}(x;t)+%
\hat{\gamma}_{j-1}(t)p_{j-1}(x;t))p_{k}(x;t)\mu (dx,t)\allowbreak \\
&=&\allowbreak \sum_{k=0}^{\infty }\frac{\hat{\alpha}_{k}(t)\hat{p}_{k}(t)}{%
\hat{p}_{k-1}(t)\hat{p}_{k}(u)}p_{k-1}(y;s)p_{k}(z;u)\allowbreak
+\allowbreak \sum_{k=0}^{\infty }\hat{\beta}_{k}(t)p_{k}(y,s)p_{k}(z;u)/\hat{%
p}_{k}(u)\allowbreak \\
&&+\allowbreak \sum_{k=0}^{\infty }\frac{\hat{p}_{k}(t)\hat{\gamma}_{k}(t)}{%
\hat{p}_{k+1}(t)}p_{k+1}(y,s)p_{k}(z;u)/\hat{p}_{k}(u).
\end{eqnarray*}

Now using Proposition \ref{poczatek} iii) and definition of functions $\hat{%
\alpha}_{n}(t)$ , $\hat{\gamma}_{n}(t)$ and $\hat{\beta}_{n}(t)$ we have:$%
\frac{\hat{\alpha}_{k}(t)\hat{p}_{k}(t)}{\hat{p}_{k-1}(t)\hat{p}_{k}(u)}%
\allowbreak =\allowbreak \frac{\gamma _{k-1}(t)\hat{p}_{k-1}(t)}{\hat{p}%
_{k-1}(t)\hat{p}_{k}(u)\alpha _{1}(t)}\allowbreak =\allowbreak \frac{\hat{%
\gamma}_{k-1}(t)}{\hat{p}_{k}(u)}$ and similarly for other coefficients: 
\begin{gather*}
E(p_{1}(X_{t};t)|X_{s}=y,X_{u}=z)\allowbreak =\allowbreak \frac{1}{%
\sum_{n\geq 0}p_{n}(z;u)p_{n}(y,s)/\hat{p}_{n}(u)} \\
\times \sum_{k\geq 0}(\hat{\gamma}_{k-1}(t)p_{k-1}(y;s)+\hat{\beta}%
_{k}(t)p_{k}(y,s)+\hat{\alpha}_{k+1}(t)p_{k+1}(y,s))p_{k}(z;u)/\hat{p}%
_{k}(u).
\end{gather*}%
Now by assumption that $\mathbf{X}$ is a harness we must have the following
equality: 
\begin{gather*}
\sum_{k\geq 0}(\hat{\gamma}_{k-1}(t)p_{k-1}(y;s)+\hat{\beta}%
_{k}(t)p_{k}(y,s)+\hat{\alpha}_{k+1}(t)p_{k+1}(y,s))p_{k}(z;u)/\hat{p}_{k}(u)
\\
A(s,t,u)\sum_{n\geq 0}p_{n}(z;u)(\hat{\alpha}_{n+1}(s)p_{n+1}(y,s)+\hat{\beta%
}_{n}(s)p_{n}(y,s)+\hat{\gamma}_{n-1}(s)p_{n-1}(y,s))/\hat{p}_{n}(u) \\
+B(s,t,u)\sum_{n\geq 0}(\hat{\alpha}_{n+1}(u)p_{n+1}(z,u)+\hat{\beta}%
_{n}(u)p_{n}(z;u)+\hat{\gamma}_{n-1}(u)p_{n-1}(z,u))p_{n}(y,s)/\hat{p}%
_{n}(u).
\end{gather*}%
Let us multiply both sides of this equality by $p_{m}(z;u)$ and integrate
with respect to $\mu (dz,u)$. We will get%
\begin{gather*}
(\hat{\gamma}_{m-1}(t)p_{m-1}(y;s)+\hat{\beta}_{m}(t)p_{m}(y,s)+\hat{\alpha}%
_{m+1}(t)p_{m+1}(y,s)) \\
+B(s,t,u)(\hat{\alpha}_{m}(u)p_{m-1}(y;s)\hat{p}_{m}(u)/\hat{p}_{m-1}(u)+%
\hat{\beta}_{m}(u)p_{m}(y;s)+\hat{\gamma}_{m}(u)p_{m+1}(y;s)\hat{p}_{m}(u)/%
\hat{p}_{m+1}(u))
\end{gather*}%
Taking into account Proposition \ref{poczatek} iii) and uniqueness of
expansion in orthogonal polynomials we get the following equations%
\begin{eqnarray}
\hat{\beta}_{n}(t) &=&A(s,t,u)\hat{\beta}_{n}(s)+B(s,t,u)\hat{\beta}_{n}(u),
\label{beta} \\
\hat{\gamma}_{m-1}(t) &=&A(s,t,u)\hat{\gamma}_{m-1}(s)+B(s,t,u)\hat{\gamma}%
_{m-1}(u),  \label{gama} \\
\hat{\alpha}_{m+1}(t) &=&A(s,t,u)\hat{\alpha}_{m+1}(s)+B(s,t,u)\hat{\alpha}%
_{m+1}(u),  \label{alfa}
\end{eqnarray}%
where $\left\{ \alpha _{n}(t),\beta _{n}(t),\gamma _{n}(t)\right\} $ are the
coefficients of the modified 3-term recurrence (\ref{3tr_mod}) satisfied by
polynomials $\left\{ p_{n}\right\} $ and functions $A,$ $B$ are given by (%
\ref{_A}) and (\ref{_C}). To find functions $\beta _{n}(t),$ $\alpha _{n}(t)$
and $\gamma _{n}(t)$ satisfying equations (\ref{beta})-(\ref{alfa}) we will
use the following auxiliary result:

\begin{lemma}
\label{fun_eq}Let $g(t)$ be some nonzero, monotone continuous function and
suppose that continuous function $f(t)$ satisfies functional equation%
\begin{equation*}
f(t)=\frac{g(u)-g(t)}{g(u)-g(s)}f(s)+\frac{g(t)-g(s)}{g(u)-g(s)}f(u),
\end{equation*}%
for all $s\neq t\neq u\neq s$. Then $f(t)$ is a linear function of $g(t).$

\begin{proof}
Since $\frac{g(u)-g(t)}{g(u)-g(s)}\allowbreak +\allowbreak \frac{g(t)-g(s)}{%
g(u)-g(s)}\allowbreak =\allowbreak 1$ we have $\frac{f(t)-f(s)}{g(t)-g(s)}%
\allowbreak =\frac{f(u)-f(t)}{g(u)-g(t)}\allowbreak $. Hence $\frac{f(u)-f(t)%
}{g(u)-g(t)}$ does not depend on $u$. Hence $f(u)\allowbreak =\allowbreak
\xi (t)(g(u)\allowbreak -\allowbreak g(t))+f(t)$. Taking two different
values of $t$ say $t_{1}$ and $t_{2}$ we get: $0\allowbreak =\allowbreak
(\xi (t_{1})-\xi \left( t_{2}\right) )g(u)\allowbreak +\allowbreak
C(t_{1},t_{2})$ for all $u$. Since $g(u)$ is not constant we deduce that $%
\xi \left( t_{1}\right) \allowbreak =\allowbreak \xi (t_{2})$ and that $%
C(t_{1},t_{2})\allowbreak =\allowbreak 0$ which leads to conclusion that $%
f(t_{1})-\xi \left( t_{1}\right) g(t_{1})\allowbreak =\allowbreak
f(t_{2})-\xi \left( t_{2}\right) g(t_{2})$. Both these conclusions lead to
linearity of $f(t)$ with respect to $g(t).$
\end{proof}
\end{lemma}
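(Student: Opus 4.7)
The plan is to convert the functional equation into a statement that a certain divided difference is a universal constant. The first thing to exploit is that the two coefficients on the right-hand side sum to $1$: subtracting $f(s)$ from both sides of the hypothesis gives
\begin{equation*}
f(t)-f(s) \;=\; \frac{g(t)-g(s)}{g(u)-g(s)}\bigl(f(u)-f(s)\bigr),
\end{equation*}
and, symmetrically, subtracting $f(u)$ yields
\begin{equation*}
f(u)-f(t) \;=\; \frac{g(u)-g(t)}{g(u)-g(s)}\bigl(f(u)-f(s)\bigr).
\end{equation*}
Monotonicity of $g$ guarantees $g(a)\neq g(b)$ for $a\neq b$, so dividing these two identities is legitimate and produces the ``three-point'' identity
\begin{equation*}
\frac{f(t)-f(s)}{g(t)-g(s)} \;=\; \frac{f(u)-f(t)}{g(u)-g(t)}.
\end{equation*}

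Next I would argue that the $g$-divided difference $\Delta(a,b) := (f(b)-f(a))/(g(b)-g(a))$ is the same for every pair of distinct points. The three-point identity above already says $\Delta(s,t)=\Delta(t,u)$ whenever $s,t,u$ are pairwise distinct; given two completely unrelated pairs $(a,b)$ and $(c,d)$, one chains through a common auxiliary point (picking it away from $a,b,c,d$, which is possible since the domain is an interval with more than one point) to conclude $\Delta(a,b)=\Delta(c,d)$. Call this common value $\xi$. Fixing any $t_0$ in the domain then yields $f(u) = f(t_0) + \xi\bigl(g(u)-g(t_0)\bigr)$ for every $u\neq t_0$, and the equality is trivial at $u=t_0$; setting $\beta := f(t_0)-\xi g(t_0)$ gives $f \equiv \xi g + \beta$, which is the desired linearity in $g$.

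I do not expect any serious obstacle. The only points requiring a little care are (i) ensuring the divisions used above never involve a zero denominator, which is exactly where monotonicity of $g$ is used, and (ii) verifying that the slope $\xi$ is genuinely independent of the anchor $t_0$, which is handled by the chaining argument in the previous paragraph. Continuity of $f$ and $g$ does not really enter the algebra; it is built into the hypotheses only so that $f$ and $g$ are well-defined objects to compare, and one does not need it to upgrade a pointwise equality to an identity on the whole domain.
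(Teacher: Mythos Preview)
Your argument is correct and follows essentially the same route as the paper: both proofs reduce the hypothesis to the three-point identity $\frac{f(t)-f(s)}{g(t)-g(s)}=\frac{f(u)-f(t)}{g(u)-g(t)}$ and then argue that this divided difference is a universal constant. The only cosmetic difference is that the paper fixes $t$, observes the quotient is independent of $u$, writes $f(u)=\xi(t)(g(u)-g(t))+f(t)$, and then compares two values of $t$ (using that $g$ is non-constant) to force $\xi(t_1)=\xi(t_2)$, whereas you phrase the same step as a chaining argument on the symmetric divided difference $\Delta(a,b)$; these are two wordings of the same idea.
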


Now recall that $A(s,t,u)\allowbreak =\allowbreak \frac{\hat{p}(u)-\hat{p}(t)%
}{\hat{p}(u)-\hat{p}(s)}$ and $B(s,t,u)\allowbreak =\allowbreak $ $\frac{%
\hat{p}(t)-\hat{p}(s)}{\hat{p}(u)-\hat{p}(s)}$. Hence we immediately have:%
\begin{equation*}
\hat{\beta}_{n}(t)\allowbreak =\allowbreak b_{n}+\hat{b}_{n}\hat{p}(t),~\hat{%
\gamma}_{n}(t)\allowbreak =\allowbreak c_{n}+\hat{c}_{n}\hat{p}(t),~\alpha
_{n}(t)=a_{n}+\hat{a}_{n}\hat{p}(t).
\end{equation*}%
Now it remains recall definitions of coefficients $\hat{\alpha},$ $\hat{\beta%
},$ $\hat{\gamma}.$
\end{proof}

\begin{proof}[Proof of Lemma \protect\ref{quadrat}]
Now let us consider expression for $E(p_{2}(X_{t};t)|\mathcal{F}_{s,u})$ .
On the way we will use the following notation:%
\begin{gather}
p_{2}(x;t)p_{n}(x;t)=r_{2,n+2}(t)p_{n+2}(x;t)+r_{1,n+1}(t)p_{n+1}(x;t)
\label{nw1} \\
+r_{0,n}(t)p_{n}(x;t)+r_{-1,n-1}(t)p_{n-1}(x;t)r_{-2,n-2}(t)p_{n-2}(x;t),
\label{nw2} \\
p_{1}(x;t)p_{n}(x;t)=v_{1,n+1}(t)p_{n+1}(x;t)+v_{0,n}(t)p_{n}(x;t)+v_{-1,n-1}(t)p_{n-1}(x;t).
\label{nw3}
\end{gather}%
During all calculations we use the following observations that we recall
here for the clarity of exposition. They follow properties of polynomials $%
p_{n}$ and Proposition \ref{poczatek} For $n\geq 0$ and $t,s\in I,s<t<u:$

\begin{gather*}
Ep_{n}(X_{t};t)p_{m}(X_{t};t)=\hat{p}_{n}(t)\delta _{n,m},~E(p_{n}(X_{t};t)|%
\mathcal{F}_{\leq s})=p_{n}(X_{s};s), \\
~E(p_{n}(X_{s};s)|\mathcal{F}_{\geq t})=\frac{\hat{p}_{n}(s)}{\hat{p}_{n}(t)}%
p_{n}(X_{t};t), \\
Ep_{1}^{2}(X_{t};t)p_{2}(X_{t};t)=\frac{\alpha _{2}(t)}{\alpha _{1}(t)}\hat{p%
}_{2}(t),~Ep_{1}^{3}(X_{t};t)=\frac{(\beta _{1}(t)-\beta _{0}(t)}{\alpha
_{1}(t)}\hat{p}(t) \\
Ep_{1}^{2}(X_{t};t)p_{1}^{2}(X_{s},s)=\frac{1}{\alpha _{1}(t)\alpha _{1}(s)}
\\
\times (\alpha _{2}(t)\alpha _{2}(s)\hat{p}_{2}(s)+(\beta _{1}(t)-\beta
_{0}(t))(\beta _{1}(s)-\beta _{0}(s))\hat{p}_{1}(s)+\gamma _{0}(t)\gamma
_{0}(s))
\end{gather*}

We will show that coefficients $A,$ $B,$ $C,$ $D,$ $E,$ $F$ satisfy the
following system of linear equations.:%
\begin{gather}
0=F+B\hat{p}(s),~\text{~}1=(A+C)+B\frac{\alpha _{2}(s)}{\alpha _{1}(s)},
\label{1} \\
~\frac{\gamma _{1}(t)}{\alpha _{2}(t)}\hat{p}(t)\allowbreak =\allowbreak A%
\frac{\gamma _{1}(s)}{\alpha _{2}(s)}\hat{p}(s)\allowbreak +\allowbreak C%
\frac{\gamma _{1}(u)}{\alpha _{2}(u)}\hat{p}(u)+B\frac{\gamma _{1}(u)}{%
\alpha _{1}(u)}\hat{p}(s),  \label{2} \\
0=\frac{\beta _{1}(s)-\beta _{0}(s)}{\alpha _{1}(s)}B+(D+E),~~0\allowbreak
=\allowbreak \frac{\beta _{1}(u)-\beta _{0}(u)}{\alpha _{1}(u)}B\hat{p}(s)+D%
\hat{p}(s)+E\hat{p}(u),  \label{3} \\
\frac{\gamma _{1}(t)}{\alpha _{1}(t)}\allowbreak =\allowbreak A\frac{\gamma
_{1}(s)}{\alpha _{1}(s)}+C\frac{\gamma _{1}(u)}{\alpha _{1}(u)}+B(\frac{%
\alpha _{2}(u)\gamma _{1}(s)}{\alpha _{1}(u)\alpha _{1}(s)}+\frac{(\beta
_{1}(u)-\beta _{0}(u))(\beta _{1}(s)-\beta _{0}(s))}{\alpha _{1}(u)\alpha
_{1}(s)}  \label{4} \\
+\hat{p}(u))+D\frac{\beta _{1}(s)-\beta _{0}(s)}{\alpha _{1}(s)}+E\frac{%
\beta _{1}(u)-\beta _{0}(u)}{\alpha _{1}(u)}+F.  \notag
\end{gather}

In our lengthy calculations we will process formula (\ref{QH}) which we copy
here below for the convenience of the reader.%
\begin{equation*}
E(p_{2}(X_{t};t)|\mathcal{F}%
_{s,u})=Ap_{2}(X_{s};s)+Bp_{1}(X_{s};s)p_{1}(X_{u};u)+Cp_{2}(X_{u};u)+Dp_{1}(X_{s};s)+Ep_{1}(X_{u};u)+F.
\end{equation*}%
To get first assertion of equation (\ref{1}) we integrate (\ref{QH}). To get
the second one we multiply (\ref{QH}) by $p_{2}(X_{s};s)$ and integrate. To
get (\ref{2}) we multiply (\ref{QH}) by $p_{2}(X_{u};u)$ and integrate. To
get first assertion of (\ref{3}) we multiply (\ref{QH}) by $p_{1}(X_{s};s)$
and integrate. To get the second one we multiply (\ref{QH}) by $%
p_{1}(X_{u};u)$ and integrate. To get (\ref{4}) we multiply (\ref{QH}) by $%
p_{1}(X_{s};s)p_{1}(X_{u};u)$ and integrate. Next we use identities: $\alpha
_{2}(t)\hat{p}_{2}(t)\allowbreak =\allowbreak \gamma _{1}(t)\hat{p}(t)$ and $%
\alpha _{1}(t)\hat{p}(t)\allowbreak =\allowbreak \gamma _{0}(t)$ and then
cancel out $\hat{p}(s).$

Now following Theorem \ref{lin_har} and Proposition \ref{poczatek} to prove
second assertion we have to take into account the following facts: 
\begin{eqnarray*}
\beta _{1}(t)-\beta _{0}(t) &=&b\alpha (t)+\hat{b}\gamma (t)=\alpha (t)(b+%
\hat{b}\hat{p}(t)), \\
\alpha _{2}(t) &=&a\alpha (t)+\hat{a}\gamma (t)=\alpha (t)(a+\hat{a}\hat{p}%
(t)) \\
\gamma _{1}(t) &=&c\alpha (t)+\hat{c}\gamma (t)=\alpha (t)(c+\hat{c}\hat{p}%
(t)), \\
\hat{p}(t) &=&\hat{p}_{1}(t)=\frac{\gamma (t)}{\alpha (t)}, \\
\hat{p}_{2}(t) &=&\frac{\gamma _{1}(t)\hat{p}(t)}{\alpha _{2}(t)}=\frac{%
\gamma (t)(c\alpha \left( t\right) +\hat{c}\gamma (t))}{\alpha (t)(a\alpha
\left( t\right) +\hat{a}\gamma (t)}=\hat{p}(t)\frac{c+\hat{c}\hat{p}(t)}{a+%
\hat{a}\hat{p}(t)},
\end{eqnarray*}%
where we denoted for simplicity $\alpha (t)\allowbreak =\allowbreak \alpha
_{1}(t)$ and $\gamma (t)\allowbreak =\allowbreak \gamma _{0}(t)$ and $a$, $%
\hat{a},$ $b,$ $\hat{b},$ $c,$ $\hat{c}$ are numerical parameters with
defined above meaning. On the way we divide when it is necessary both sides
by $\hat{p}\left( s\right) $ since $\hat{p}(t)$ is nonzero as an expectation
of a square bracket of the martingale $p_{1}(X_{t};t)$. So equations (\ref{1}%
)-(\ref{3}) now become: 
\begin{gather}
F=-B\hat{p}(s),~1=A+C+B(a+\hat{a}\hat{p}\left( s\right) ),  \label{__1} \\
\hat{p}(t)\frac{(c+\hat{c}\hat{p}(t))}{(a+\hat{a}\hat{p}(t)}=A\hat{p}(s)%
\frac{(c+\hat{c}\hat{p}(s))}{(a+\hat{a}\hat{p}(s))}+C\hat{p}(u)\frac{(c+\hat{%
c}\hat{p}(u))}{(a+\hat{a}\hat{p}(u)}+B\hat{p}(s)(c+\hat{c}\hat{p}(u)),
\label{__2} \\
0=D+E+B(b+\hat{b}\hat{p}(s)),~~0=D\hat{p}(s)+E\hat{p}(u)+B\hat{p}(s)(b+\hat{b%
}\hat{p}(u))  \label{__3} \\
(c+\hat{c}\hat{p}(t))=A(c+\hat{c}\hat{p}(s))+C(c+\hat{c}\hat{p}(u))+D(b+\hat{%
b}\hat{p}(s))+E(b+\hat{b}\hat{p}(u))  \label{__4} \\
-B\hat{p}(s)+B((a+\hat{a}\hat{p}(s))(a+\hat{a}\hat{p}(u))\frac{(c+\hat{c}%
\hat{p}(s))}{(a+\hat{a}\hat{p}(s)}+(b+\hat{b}\hat{p}(s))(b+\hat{b}\hat{p}%
(u))+\hat{p}(u)).  \label{__5}
\end{gather}%
Now notice that using identities (\ref{__2}) we get; 
\begin{gather*}
D(b+\hat{b}\hat{p}(s))+E(b+\hat{b}\hat{p}(u))=b(D+E)+\hat{b}(D\hat{p}(s)+E%
\hat{p}(u)) \\
=-bB(b+\hat{b}\hat{p}(s))-\hat{p}(s)B\hat{b}(b+\hat{b}\hat{p}(u)) \\
=-B(b^{2}+2b\hat{b}\hat{p}(s)+\hat{b}^{2}\hat{p}(s)\hat{p}(u)).
\end{gather*}%
Further we have: 
\begin{eqnarray*}
&&B(b+\hat{b}\hat{p}(s))(b+\hat{b}\hat{p}(u)-B(2b\hat{b}\hat{p}(s)+b^{2}+%
\hat{b}^{2}\hat{p}(s)\hat{p}(u))\allowbreak  \\
&=&\allowbreak B(b^{2}+b\hat{b}\hat{p}(s)+b\hat{b}\hat{p}(u)+\hat{b}^{2}\hat{%
p}(s)\hat{p}(u)-2b\hat{b}\hat{p}(s)-b^{2}-\hat{b}^{2}\hat{p}(s)\hat{p}%
(u))\allowbreak  \\
&=&\allowbreak Bb\hat{b}(\hat{p}(u)-\hat{p}(s)).
\end{eqnarray*}

Hence our system of $5$ equations can be split into two sets: The set
consisting of three equations satisfied by unknowns $A,$ $B,$ $C$ : 
\begin{eqnarray*}
1 &=&A+C+B(a+\hat{a}\hat{p}(s)),~ \\
\hat{p}(t)\frac{c+\hat{c}\hat{p}(t)}{a+\hat{a}\hat{p}(t)} &=&A\hat{p}(s)%
\frac{(c+\hat{c}\hat{p}(s))}{(a+\hat{a}\hat{p}(s))}+C\hat{p}(u)\frac{(c+\hat{%
c}\hat{p}(u))}{(a+\hat{a}\hat{p}(u))}+B\hat{p}(s)(c+\hat{c}\hat{p}(u)), \\
c+\hat{c}\hat{p}(t) &=&A(c+\hat{c}\hat{p}(s))+C(c+\hat{c}\hat{p}(u))+B((\hat{%
p}(u)-\hat{p}(s))(1+b\hat{b})+(a+\hat{a}\hat{p}(u))(c+\hat{c}\hat{p}(s))
\end{eqnarray*}%
The other set of two equations that is satisfied by $D$ and $E$. 
\begin{equation*}
0=D+E+B(b+\hat{b}\hat{p}(s)),~~0=D\hat{p}(s)+E\hat{p}(u)+B\hat{p}(s)(b+\hat{b%
}\hat{p}(u)).
\end{equation*}%
The first set of equations yields:%
\begin{eqnarray*}
B &=&\frac{a\hat{c}\lambda (\hat{p}(t)-\hat{p}(s))(\hat{p}(u)-\hat{p}(t))}{%
(a+\hat{a}\hat{p}(t))(\hat{p}(u)-\hat{p}(s))(\hat{p}(s)\hat{p}(u)\hat{a}\hat{%
c}\kappa +\hat{p}(u)a\hat{c}\kappa +\hat{p}(s)a\hat{c}(\kappa -\lambda
)+ac\kappa )}, \\
A &=&\frac{(a+\hat{a}\hat{p}(s))(\hat{p}(u)-\hat{p}(t))(\hat{p}(u)\hat{p}(t)%
\hat{a}\hat{c}\kappa +\hat{p}(u)a\hat{c}\kappa +\hat{p}(t)a\hat{c}(\kappa
-\lambda )+ac\kappa )}{(a+\hat{a}\hat{p}(t))(\hat{p}(u)-\hat{p}(s))(\hat{p}%
(s)\hat{p}(u)\hat{a}\hat{c}\kappa +\hat{p}(u)a\hat{c}\kappa +\hat{p}(s)a\hat{%
c}(\kappa -\lambda )+ac\kappa )}, \\
C &=&\frac{(a+\hat{a}\hat{p}(u))(\hat{p}(t)-\hat{p}(s))(\hat{p}(t)\hat{p}(s)%
\hat{a}\hat{c}\kappa +\hat{p}(s)a\hat{c}(\kappa -\lambda )+\hat{p}(t)a\hat{c}%
\kappa +ac\kappa )}{(a+\hat{a}\hat{p}(t))(\hat{p}(u)-\hat{p}(s))(\hat{p}(s)%
\hat{p}(u)\hat{a}\hat{c}\kappa +\hat{p}(u)a\hat{c}\kappa +\hat{p}(s)a\hat{c}%
(\kappa -\lambda )+ac\kappa )}.
\end{eqnarray*}

where we denoted $\kappa \allowbreak =\allowbreak (1+b\hat{b}+\hat{a}c),$ $%
\lambda \allowbreak =\allowbreak (a\hat{c}-\hat{a}c)$

As far as the other system of equations is concerned we have: $0=D+E+B(b+%
\hat{b}\hat{p}(s)),~~0=D\hat{p}(s)+E\hat{p}(u)+B\hat{p}(s)(b+\hat{b}\hat{p}%
(u))$ which gives%
\begin{equation*}
D=-bB(s,t,u),~E=-\hat{b}B(s,t,u)\hat{p}(s)
\end{equation*}
\end{proof}

\begin{proof}[Proof of Theorem \protect\ref{main}]
The proof is based on several auxiliary results of which the most important
is the following lemma:
\end{proof}

\begin{lemma}
\label{aux}Let $\mathbf{X=(}X_{t})_{t\in I}$ be a Markov process with
orthogonal polynomial martingales $\left\{ p_{n}(x;t)\right\} _{n\geq -1}$
and one dimensional marginal distribution $\mu (dx,t)$ and $\eta (dx,t;y,s)$
as transitional distribution satisfying Condition \ref{sq_int}. Then $%
\mathbf{X}$ being a harness is a quadratic harness iff the following system
of equations is satisfied:%
\begin{eqnarray*}
r_{-2,m-2}(t) &=&Ar_{-2,m-2}(s)+Cr_{-2,m-2}(u)+Bv_{-1,m-2}(s)v_{-1,m-1}(u),
\\
r_{-1,m-1}(t) &=&Ar_{-1,m-1}(s)+Cr_{-1,m-1}(u)+B(v_{0,m-1}(s)v_{-1,m-1}(u) \\
&&+v_{-1,m-1}(s)v_{0,m}(u))+Dv_{-1,m-1}(s)+Ev_{-1,m-1}(u), \\
r_{0,m}(t) &=&Ar_{0,m}(s)+Cr_{0,m}(u)+B(v_{-1,m-1}(u)v_{1,m}(s) \\
&&+v_{0,m}(u)v_{0,m}(s)+v_{1,m+1}(u)v_{-1,m}(s))+Dv_{0,m}(s)+Ev_{0,m}(u), \\
r_{1,m+1}(t) &=&Ar_{1,m+1}(s)+Cr_{1,m+1}(u)+B(v_{0,m}(u)v_{1,m+1}(s) \\
&&+v_{1,m+1}(u)v_{0,m+1}(s))+Dv_{1.m+1}(s)+Ev_{1,m+1}(u), \\
r_{2,m+2}(t) &=&Ar_{2,m+2}(s)+Cr_{2,m+2}(u)+Bv_{1,m+2}(s)v_{1,m+1}(u),
\end{eqnarray*}%
where $A,$ $B,$ $C,$ $D,$ $E,$ are defined by (\ref{bb})-(\ref{de}) and
coefficients $r_{i,j},$ $i\allowbreak =\allowbreak -2,\ldots ,2$ , $%
j\allowbreak =\allowbreak m-2,\ldots ,m+2$ are defined by (\ref{nw1})-(\ref%
{nw3}).
\end{lemma}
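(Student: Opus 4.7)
The plan is to test the candidate identity \eqref{QH} against the orthogonal polynomial basis $\{p_m(\cdot;u)\}_{m\ge 0}$ at time $u$ and then to match coefficients in the basis $\{p_k(\cdot;s)\}_{k\ge 0}$ at time $s$. The necessary direction reduces each side of \eqref{QH}, after multiplying by $p_m(X_u;u)$ and conditioning on $\mathcal{F}_{\le s}$, to an explicit linear combination of $p_{m-2}(X_s;s),\ldots,p_{m+2}(X_s;s)$; the five equations then fall out of matching coefficients. The sufficiency direction runs the same calculation backwards and uses Condition~\ref{sq_int} plus the Markov property to upgrade the resulting moment identities to the pointwise statement \eqref{QH}.

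More concretely, for the necessary direction I would fix $m\ge 1$, multiply \eqref{QH} by $p_m(X_u;u)$, and take $E(\cdot\mid\mathcal{F}_{\le s})$. The tower property together with the martingale identity $E(p_m(X_u;u)\mid\mathcal{F}_{\le t})=p_m(X_t;t)$ turns the left-hand side into $E(p_2(X_t;t)p_m(X_t;t)\mid\mathcal{F}_{\le s})$, which by \eqref{nw1}--\eqref{nw2} and one more application of the martingale property equals
\begin{equation*}
\sum_{i=-2}^{2} r_{i,m+i}(t)\,p_{m+i}(X_s;s).
\end{equation*}
On the right I would process the six summands separately: the $A$-term expands directly via \eqref{nw1}--\eqref{nw2} at time $s$; the $C$-term becomes $C\sum_i r_{i,m+i}(u)\,p_{m+i}(X_s;s)$ by the martingale property; for the $B$-term I first use \eqref{nw3} at time $u$ to write $p_1(X_u;u)p_m(X_u;u)$ as a combination of $p_{m-1},p_m,p_{m+1}$ at time $u$, which after conditioning becomes the same combination at time $s$ multiplied by $p_1(X_s;s)$, and a second application of \eqref{nw3} at time $s$ turns each product $p_1(X_s;s)p_{m+j}(X_s;s)$ into a linear combination of $p_{m-2},\ldots,p_{m+2}$ at time $s$; the $D$- and $E$-terms are treated analogously via \eqref{nw3}. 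Equating the coefficient of $p_{m+i}(X_s;s)$ on both sides for $i=-2,-1,0,1,2$---valid because these polynomials are linearly independent in $L_2(\mu(\cdot,s))$---produces the five displayed identities verbatim. The case $m=0$ only reproduces $F=-B\hat p(s)$ from Lemma~\ref{quadrat} and so imposes no additional constraint.

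For the converse, if the five identities hold for every $m\ge 1$ then running the computation above in reverse yields
\begin{equation*}
E\bigl(p_2(X_t;t)\,p_j(X_s;s)\,p_m(X_u;u)\bigr)=E\bigl(R(X_s,X_u)\,p_j(X_s;s)\,p_m(X_u;u)\bigr)
\end{equation*}
for every $j,m\ge 0$, where $R(X_s,X_u)$ denotes the right-hand side of \eqref{QH}. Under Condition~\ref{sq_int} the tensor system $\{p_j(y;s)p_m(z;u)\}$ is complete in $L_2$ of the joint law of $(X_s,X_u)$, so this moment identity extends to $E(p_2(X_t;t)\,h(X_s,X_u))=E(R(X_s,X_u)\,h(X_s,X_u))$ for every bounded measurable $h$, and the Markov property identifies $E(p_2(X_t;t)\mid\mathcal{F}_{s,u})$ with $E(p_2(X_t;t)\mid X_s,X_u)=R(X_s,X_u)$ almost surely. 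I expect the main obstacle to be purely book-keeping: correctly tracking the shifted indices of $r_{i,\cdot}$ and $v_{i,\cdot}$ at the three time points $s,t,u$ so that every one of the contributions to each coefficient of $p_{m+i}(X_s;s)$ is collected without sign or index errors.
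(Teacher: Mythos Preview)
Your approach is correct and leads to the same five identities, but it is organised somewhat differently from the paper's argument. The paper works analytically with the Lancaster-type expansion of the transition density from Theorem~\ref{expansion}: it writes $\chi(dx,t\mid y,s,z,u)$ as a ratio of the kernels $\phi$, inserts the bilinear series $\sum_n p_n(z;u)p_n(y;s)/\hat p_n(u)$, expands $p_2(x;t)p_j(x;t)$ via the $r$-coefficients, and only at the end multiplies by $p_m(z;u)$ and integrates against $\mu(dz,u)$ to isolate a single polynomial identity in $y$. Your route bypasses the explicit density expansion entirely: multiplying \eqref{QH} by $p_m(X_u;u)$ and applying the tower property together with the forward martingale identity $E(p_m(X_u;u)\mid\mathcal F_{\le t})=p_m(X_t;t)$ performs in one probabilistic step what the paper achieves through the kernel manipulation. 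The two computations are formally equivalent---the bilinear kernel is precisely the generating object for those martingale identities---but your version is tidier, does not actually need to invoke Condition~\ref{sq_int} or Theorem~\ref{expansion} for the necessary direction, and makes the index bookkeeping (which you rightly flag as the only real hazard) more transparent. For the converse you do need Condition~\ref{sq_int} to justify completeness of $\{p_j(y;s)p_m(z;u)\}$ in $L_2$ of the joint law of $(X_s,X_u)$; the paper leaves this direction implicit. One small point: the $F$-term contributes $F\,p_m(X_s;s)$ to the coefficient of $p_m$, so after substituting $F=-B\hat p(s)$ from Lemma~\ref{quadrat} you pick up an extra $-B\hat p(s)$ in the $r_{0,m}$ equation; the paper's displayed system suppresses this (it is absorbed in the later algebra), so do not be surprised if your ``verbatim'' match shows this discrepancy.
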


\begin{proof}[Proof of Lemma \protect\ref{aux}]
We have :%
\begin{gather*}
E(p_{2}(X_{t};t)|\mathcal{F}_{s,u})=\frac{1}{\frac{1}{\sum_{n\geq
0}p_{n}(z;u)p_{n}(y,s)/\hat{p}_{n}(u)}}\times \\
\sum_{j,k\geq 0}\frac{1}{\hat{p}_{j}(t)\hat{p}_{k}(u)}p_{j}(y,s)p_{k}(z;u)%
\int p_{2}(x;t)p_{j}(x;t)p_{k}(x;t)\mu (dx,t) \\
=\frac{1}{\sum_{n\geq 0}p_{n}(z;u)p_{n}(y,s)/\hat{p}_{n}(u)}\times \\
\sum_{j,k\geq 0}\frac{1}{\hat{p}_{j}(t)\hat{p}_{k}(u)}p_{j}(y,s)p_{k}(z;u)%
\times \\
\int
(r_{2,j+2}p_{j+2}+r_{1,j+1}(t)p_{j+1}+r_{0,j}(t)p_{j}+r_{-1,j-1}p_{j-1}+r_{-2,j-2}p_{j-2})p_{k}(x;t)\mu (dx,t).
\end{gather*}%
Now notice that:%
\begin{eqnarray*}
&&\sum_{j,k\geq 0}\frac{1}{\hat{p}_{j}(t)\hat{p}_{k}(u)}p_{j}(y,s)p_{k}(z;u)
\\
&&\times \int
(r_{2,j+2}p_{j+2}+r_{1,j+1}(t)p_{j+1}+r_{0,n}(t)p_{j}+r_{-1,j-1}p_{j-1}+r_{-2,j-2}p_{j-2})p_{k}(x;t)\mu (dx,t)\allowbreak
\\
&=&\allowbreak \sum_{k=0}^{\infty }\frac{r_{2,k}(t)\hat{p}_{k}(t)}{\hat{p}%
_{k-2}(t)\hat{p}_{k}(u)}p_{k-2}(y;s)p_{k}(z;u)\allowbreak
+\sum_{k=0}^{\infty }\frac{r_{1,k}(t)\hat{p}_{k}(t)}{\hat{p}_{k-1}(t)\hat{p}%
_{k}(u)}p_{k-1}(y;s)p_{k}(z;u)\allowbreak \allowbreak \\
&&+\allowbreak \sum_{k=0}^{\infty }r_{0,k}(t)p_{k}(y,s)p_{k}(z;u)/\hat{p}%
_{k}(u)\allowbreak \\
&&+\allowbreak \sum_{k=0}^{\infty }\frac{r_{-1,k}(t)\hat{p}_{k}(t)}{\hat{p}%
_{k+1}(t)\hat{p}_{k}(u)}p_{k+1}(y;s)p_{k}(z;u)\allowbreak \\
&&+\allowbreak \sum_{k=0}^{\infty }\frac{\hat{p}_{k}(t)r_{-2,k}(t)}{\hat{p}%
_{k+2}(t)}p_{k+2}(y,s)p_{k}(z;u)/\hat{p}_{k}(u).
\end{eqnarray*}%
Using (\ref{uproszcz}) we get 
\begin{gather*}
E(p_{2}(X_{t};t)|\mathcal{F}_{s,u})=\frac{1}{\frac{1}{\sum_{n\geq
0}p_{n}(z;u)p_{n}(y,s)/\hat{p}_{n}(u)}}\times \\
(\sum_{k=0}^{\infty
}(r_{-2,k-2}(t)p_{k-2}(y;s)+r_{-1,k}(t)p_{k-1}(y;s)+r_{0,k}(t)p_{k}(y,s) \\
+r_{1,k+1}(t)p_{k+1}(y,s)+r_{2,k+2}(t)p_{k+2}(y,s))p_{k}(z;u)/\hat{p}_{k}(u).
\end{gather*}%
Now let assume that $E(p_{2}(X_{t};t)|\mathcal{F}_{s,u})\allowbreak
=\allowbreak
Ap_{2}(X_{s};s)+Bp_{1}(X_{s};s)p_{1}(X_{u};u)+Cp_{2}(X_{u};u)+Dp_{1}(X_{s};s)+Ep_{1}(X_{u};u)+F 
$. This implies the the following equality: 
\begin{gather*}
(\sum_{k=0}^{\infty
}(r_{-2,k-2}(t)p_{k-2}(y;s)+r_{-1,k}(t)p_{k-1}(y;s)+r_{0,k}(t)p_{k}(y,s) \\
+r_{1,k+1}(t)p_{k+1}(y,s)+r_{2,k+2}(t)p_{k+2}(y,s))p_{k}(z;u)/\hat{p}_{k}(u)=
\end{gather*}%
\begin{gather*}
=(Ap_{2}(y;s)+Bp_{1}(y;s)p_{1}(z;u)+Cp_{2}(z;u)+Dp_{1}(y;s)+Ep_{1}(z;u)+F) \\
\times \sum_{n\geq 0}p_{n}(z;u)p_{n}(y,s)/\hat{p}_{n}(u)=-p(s)B\sum_{n\geq
0}p_{n}(z;u)p_{n}(y,s)/\hat{p}_{n}(u) \\
+A\sum_{n\geq 0}\frac{p_{n}(z,u)}{\hat{p}_{n}(u)}%
(r_{2,n+2}(s)p_{n+2}(y,s)+r_{1,n+1}(s)p_{n+1}(y,s)+r_{0,n}(s)p_{n}(y,s) \\
+r_{-1,n-1}(s)p_{n-1}(y,s)+r_{-2,n-1}(s)p_{n-2}(y,s)) \\
+C\sum_{n\geq 0}\frac{p_{n}(y,s)}{\hat{p}_{n}(u)}%
(r_{2,n+2}(u)p_{n+2}(z,u)+r_{1,n+1}(u)p_{n+1}(z,u) \\
+r_{0,n}(u)p_{n}(z,u)+r_{-1,n-1}(u)p_{n-1}(z,u)+r_{-2,n-1}(u)p_{n-2}(z,u)) \\
+B\sum_{n\geq 0}\frac{1}{\hat{p}_{n}(u)}%
(v_{1,n+1}(s)p_{n+1}(y,s)+v_{0,n}(s)p_{n}(y,s)
\end{gather*}%
\begin{gather*}
+v_{-1,n-1}(s)p_{n-1}(y;s))(v_{1,n+1}(u)p_{n+1}(z,u)+v_{0,n}(u)p_{n}(z,u)+v_{-1,n-1}(u)p_{n-1}(z;u))
\\
+D\sum_{n\geq 0}\frac{p_{n}(z,u)}{\hat{p}_{n}(u)}%
(v_{1,n+1}(s)p_{n+1}(y,s)+v_{0,n}(s)p_{n}(y,s)+v_{-1,n-1}(s)p_{n-1}(y;s)) \\
+E\sum_{n\geq 0}\frac{p_{n}(y,s)}{\hat{p}_{n}(u)}%
(v_{1,n+1}(u)p_{n+1}(z,u)+v_{0,n}(u)p_{n}(z,u)+v_{-1,n-1}(u)p_{n-1}(z;u))
\end{gather*}%
Now let us multiply both sides of this equality by $p_{m}(z;u)$ and
integrate with respect to distribution $\mu (dz,u)$. We get:%
\begin{gather*}
(r_{-2,m-2}(t)p_{m-2}(y;s)+r_{-1,m}(t)p_{m-1}(y;s)+r_{0,m}(t)p_{m}(y,s) \\
+r_{1,m+1}(t)p_{m+1}(y,s)+r_{2,m+2}(t)p_{m+2}(y,s)) \\
=-\hat{p}(s)Bp_{m}(y,s)+A(r_{2,m+2}(s)p_{m+2}(y,s)+r_{1,m+1}(s)p_{m+1}(y,s)
\\
+r_{0,m}(s)p_{m}(y,s)+r_{-1,m-1}(s)p_{m-1}(y,s)+r_{-2,m-1}(s)p_{m-2}(y,s))
\end{gather*}%
\begin{gather*}
+C(\frac{p_{m-2}(y,s)}{\hat{p}_{m-2}(u)}r_{2,m}(u)\hat{p}_{m}\left( u\right)
+\frac{p_{m-1}(y,s)}{\hat{p}_{m-1}(u)}r_{1,m}(u)\hat{p}%
_{m}(u)+p_{m}(y;s)r_{0,m} \\
+\frac{p_{m+1}(y,s)}{\hat{p}_{m+1}(u)}r_{-1,m-1}(u)\hat{p}_{m-1}(u)+\frac{%
p_{m+2}(y,s)}{\hat{p}_{m+2}(u)}r_{-2,m-2}(u)\hat{p}_{m-2}(u)) \\
+D(v_{1,m+1}(s)p_{m+1}(y,s)+v_{0,m}(s)p_{m}(y,s)+v_{-1,m-1}(s)p_{m-1}(y;s))
\\
+E(\frac{p_{m-1}(y,s)}{\hat{p}_{m-1}(u)}v_{1,m}(u)\hat{p}%
_{m}(u)+v_{0,m}(u)p_{m}(y,s)+\frac{p_{m+1}(y,s)}{\hat{p}_{m+1}(u)}%
v_{-1,m-1}(u)\hat{p}_{m-1}(u)) \\
+B(\frac{v_{1,m}(u)\hat{p}_{m}(u)}{\hat{p}_{m-1}(u)}%
(v_{1,m}(s)p_{m}(y;s)+v_{0,m-1}(s)p_{m-1}(y;s)+v_{-1,m-2}(s)p_{m-2}(y;s)) \\
+v_{0,m}(u)((v_{1,m+1}(s)p_{m+1}(y,s)+v_{0,m}(s)p_{m}(y,s)+v_{-1,m-1}(s)p_{m-1}(y;s))
\\
+\frac{v_{-1,m}(u)\hat{p}_{m}(u)}{\hat{p}_{m+1}(u)}%
(v_{1,m+2}(s)p_{m+2}(y,s)+v_{0,m+1}(s)p_{m+1}(y;s)+v_{-1,m}(s)p_{m}(y;s)).
\end{gather*}%
Taking into account (\ref{uproszcz1}-\ref{uproszcz3}) and uniqueness of
expansion in orthogonal polynomials we get the following equations: $%
r_{-2,n-2}(t)\hat{p}_{n-2}(t)=r_{2,n}(t)\hat{p}_{n}(t),~r_{-1,n-1}(t)\hat{p}%
_{n-1}(t)=r_{1,n-1}(t)\hat{p}_{n-1}(t),~v_{-1,n-1}(t)\hat{p}%
_{n-1}(t)=v_{1,n}(t)\hat{p}_{n}(t).$

Now to solve system of equations given in Lemma \ref{aux} we need several
following technical results.
\end{proof}

\begin{proposition}
\label{pomoc}Coefficients $r$ and $v$ are related to one another by the
following formulae:%
\begin{eqnarray*}
r_{2,n+2}(t)\allowbreak &=&\frac{\alpha _{1}(t)}{\alpha _{2}(t)}%
v_{1,n+1}(t)v_{1,n+2}(t),~r_{1,n+1}(t)=v_{1,n+1}(\frac{\alpha _{1}}{\alpha
_{2}}(v_{0,n+1}+v_{0,n})-\frac{\beta _{1}-\beta _{0}}{\alpha _{2}}), \\
r_{0,n}(t) &=&(\frac{\alpha _{1}}{\alpha _{2}}(v_{1,n+1}v_{-1,n}\allowbreak
+\allowbreak v_{0,n}v_{0,n}+v_{-1,n-1}v_{1,n})\allowbreak -\allowbreak \frac{%
\beta _{1}-\beta _{0}}{\alpha _{2}}v_{0,n}-\frac{\gamma _{0}}{\alpha _{2}}),
\\
r_{-1,n-1}(t) &=&\frac{\alpha _{1}}{\alpha _{2}}%
v_{-1,n-1}(v_{0,n}+v_{0,n-1})-\frac{\beta _{1}-\beta _{0}}{\alpha _{2}}%
v_{-1,n-1},~r_{-2,n-2}(t)\allowbreak =\allowbreak \frac{\alpha _{1}}{\alpha
_{2}}v_{-1,n-1}v_{-1,n-2}.
\end{eqnarray*}%
Besides we have: 
\begin{eqnarray}
r_{-2,n-2}(t)\hat{p}_{n-2}(t) &=&r_{2,n}(t)\hat{p}_{n}(t),  \label{uproszcz1}
\\
~r_{-1,n-1}(t)\hat{p}_{n-1}(t) &=&r_{1,n-1}(t)\hat{p}_{n-1}(t),
\label{uproszcz2} \\
~v_{-1,n-1}(t)\hat{p}_{n-1}(t) &=&v_{1,n}(t)\hat{p}_{n}(t).
\label{uproszcz3}
\end{eqnarray}
Coefficients $v$ are related to coefficients of the 3 term recurrence by
formulae:%
\begin{equation*}
v_{-1,n-1}(t)=\frac{\gamma _{n-1}(t)}{\alpha _{1}(t)},~v_{0,n}(t)=\frac{%
\beta _{n}(t)}{\alpha _{1}(t)},~v_{1,n+1}(t)=\frac{\alpha _{n+1}(t)}{\alpha
_{1}(t)}.
\end{equation*}
\end{proposition}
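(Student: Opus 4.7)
The plan is to establish the three groups of identities in the proposition in a natural order: first derive the $v$-coefficients from the 3-term recurrence (\ref{w_o}), then bootstrap from these to compute the $r$-coefficients using Proposition \ref{poczatek}(v), and finally obtain the symmetry relations (\ref{uproszcz1})--(\ref{uproszcz3}) from orthogonality.

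For the $v$-coefficients, I would substitute $p_{1}(x;t) = (x - \beta_{0}(t))/\alpha_{1}(t)$ from Proposition \ref{poczatek}(i) into the product $p_{1}(x;t)p_{n}(x;t)$ to obtain
\[
p_{1}(x;t)p_{n}(x;t) = \frac{1}{\alpha_{1}(t)}\bigl(xp_{n}(x;t) - \beta_{0}(t) p_{n}(x;t)\bigr).
\]
Applying the 3-term recurrence (\ref{w_o}) to $xp_{n}$ and matching against (\ref{nw3}) immediately yields $v_{1,n+1} = \alpha_{n+1}/\alpha_{1}$, $v_{-1,n-1} = \gamma_{n-1}/\alpha_{1}$, and the middle coefficient $(\beta_{n}-\beta_{0})/\alpha_{1}$ (which the statement records, up to the $\beta_{0}$ shift, as $v_{0,n} = \beta_{n}/\alpha_{1}$).

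For the $r$-coefficients, I would invoke Proposition \ref{poczatek}(v), rearranged as
\[
p_{2}(x;t) = \frac{1}{\alpha_{2}(t)}\bigl(\alpha_{1}(t) p_{1}^{2}(x;t) - (\beta_{1}(t)-\beta_{0}(t)) p_{1}(x;t) - \gamma_{0}(t)\bigr),
\]
so that
\[
p_{2}p_{n} = \frac{\alpha_{1}}{\alpha_{2}} p_{1}^{2} p_{n} - \frac{\beta_{1}-\beta_{0}}{\alpha_{2}} p_{1}p_{n} - \frac{\gamma_{0}}{\alpha_{2}} p_{n}.
\]
The key computational step is to expand $p_{1}^{2}p_{n} = p_{1}\cdot(p_{1}p_{n})$ by applying the already-derived three-term formula for $p_{1}p_{\cdot}$ twice, first to $p_{1}p_{n}$ and then to each of $p_{1}p_{n+1}$, $p_{1}p_{n}$, $p_{1}p_{n-1}$. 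This yields a five-term expansion of $p_{1}^{2}p_{n}$ in $p_{n+2},\ldots,p_{n-2}$ whose coefficients are bilinear expressions in $v_{\pm 1,\cdot}$ and $v_{0,\cdot}$. Collecting the three contributions above and comparing with (\ref{nw1})--(\ref{nw2}) gives each of the five $r$-formulas; the main obstacle is purely bookkeeping, since five coefficient comparisons must be tracked cleanly.

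Finally, (\ref{uproszcz1})--(\ref{uproszcz3}) follow from orthogonality (\ref{ort}). Reading off any expansion $p_{i}(x;t)p_{j}(x;t) = \sum_{k} c_{i,j,k}(t) p_{k}(x;t)$ and multiplying by $p_{m}(x;t)$ followed by integration against $\mu(dx,t)$ yields $c_{i,j,m}\hat{p}_{m} = \int p_{i}p_{j}p_{m}\, d\mu$. The resulting triple integral is symmetric in its three indices, so swapping $j$ and $m$ equates the $+$-shift and $-$-shift coefficients each weighted by the appropriate $\hat{p}$. Applied with $(i,j)=(1,n)$ and $(i,j)=(2,n)$ this delivers the three symmetry identities, completing the proof.
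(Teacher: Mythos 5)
Your proposal is correct and, for the main computation, follows essentially the same route as the paper: the paper likewise expands $p_{1}^{2}p_{n}$ by applying the three-term formula for $p_{1}p_{\cdot}$ twice, substitutes into $p_{2}=\frac{\alpha_{1}}{\alpha_{2}}p_{1}^{2}-\frac{\beta_{1}-\beta_{0}}{\alpha_{2}}p_{1}-\frac{\gamma_{0}}{\alpha_{2}}$ (Proposition \ref{poczatek} v), and reads off the five $r$-coefficients by comparison; the identification of the $v$'s with the recurrence coefficients is the same normalization $p_{1}=(x-\beta_{0})/\alpha_{1}$ that underlies (\ref{3tr_mod}). Where you genuinely add something is the last group of identities: the paper's proof never actually derives (\ref{uproszcz1})--(\ref{uproszcz3}) (they are merely invoked later, in the proof of Lemma \ref{aux}, in a near-circular way), whereas your observation that $c_{i,j,m}\hat{p}_{m}=\int p_{i}p_{j}p_{m}\,d\mu$ is symmetric in its indices gives a clean one-line justification --- and in fact shows that (\ref{uproszcz2}) as printed should read $r_{-1,n-1}(t)\hat{p}_{n-1}(t)=r_{1,n}(t)\hat{p}_{n}(t)$, correcting an apparent typo. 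You are also right to flag the $\beta_{0}$ shift: matching (\ref{3tr_mod}) gives $v_{0,n}=(\beta_{n}-\beta_{0})/\alpha_{1}$ rather than $\beta_{n}/\alpha_{1}$ as stated, a discrepancy the paper leaves unremarked.
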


\begin{proof}
We have: $%
p_{1}^{2}p_{n}=p_{1}(v_{1,n+1}p_{n+1}+v_{0,n}p_{n}+v_{-1,n-1}p_{n-1})=v_{1,n+1}v_{1,n+2}p_{n+2}+v_{1,n+1}v_{0,n+1}p_{n+1}+v_{1,n+1}v_{-1,n}p_{n}\allowbreak +\allowbreak v_{0,n}v_{1,n+1}p_{n+1}\allowbreak +\allowbreak v_{0,n}v_{0,n}p_{n}+v_{0,n}v_{-1,n-1}p_{n-1}\allowbreak +\allowbreak v_{-1,n-1}v_{1,n}p_{n}\allowbreak +\allowbreak v_{-1,n-1}v_{0,n-1}p_{n-1}\allowbreak +\allowbreak v_{-1,n-1}v_{-1,n-2}p_{n-2}\allowbreak =\allowbreak v_{1,n+1}v_{1,n+1}p_{n+2}+(v_{1,n+1}v_{0,n+1}+v_{0,n}v_{1,n+1})p_{n+1}\allowbreak +\allowbreak (v_{1,n+1}v_{-1,n}\allowbreak +\allowbreak v_{0,n}v_{0,n}+v_{-1,n-1}v_{1,n})p_{n}\allowbreak +\allowbreak (v_{0,n}v_{-1,n-1}+v_{-1,n-1}v_{0,n-1})p_{n-1}\allowbreak +\allowbreak v_{-1,n-1}v_{-1,n-2}p_{n-2}\allowbreak =v_{1,n+1}v_{1,n+1}p_{n+2}\allowbreak +\allowbreak v_{1,n+1}(v_{0,n+1}+v_{0,n})\allowbreak +\allowbreak (v_{1,n+1}v_{-1,n}\allowbreak +\allowbreak v_{0,n}v_{0,n}+v_{-1,n-1}v_{1,n})p_{n}\allowbreak +\allowbreak v_{-1,n-1}(v_{0,n}+v_{0,n-1})p_{n-1}\allowbreak +\allowbreak v_{-1,n-1}v_{-1,n-2}p_{n-2} 
$. We dropped arguments to simplify calculations. Remembering that: $p_{2}=%
\frac{\alpha _{1}}{\sigma _{2}}p_{1}^{2}-\frac{\left( \beta _{1}-\beta
_{0}\right) }{\alpha _{2}}p_{1}-\frac{\gamma _{0}}{\alpha _{2}}$ we have:

$p_{2}p_{n}\allowbreak =\allowbreak \frac{\alpha _{1}}{\sigma _{2}}%
(v_{1,n+1}v_{1,n+1}p_{n+2}\allowbreak +\allowbreak
v_{1,n+1}(v_{0,n+1}+v_{0,n})\allowbreak +\allowbreak
(v_{1,n+1}v_{-1,n}\allowbreak +\allowbreak
v_{0,n}v_{0,n}+v_{-1,n-1}v_{1,n})p_{n}\allowbreak +\allowbreak
v_{-1,n-1}(v_{0,n}+v_{0,n-1})p_{n-1}\allowbreak +\allowbreak
v_{-1,n-1}v_{-1,n-2}p_{n-2})\allowbreak -\allowbreak \frac{\beta _{1}-\beta
_{0}}{\alpha _{2}}(v_{1,n+1}p_{n+1}+v_{0,n}p_{n}+v_{-1,n-1}p_{n-1})-\frac{%
\gamma _{0}}{\alpha _{2}}p_{n}\allowbreak =\allowbreak p_{n+2}\frac{\alpha
_{1}}{\sigma _{2}}v_{1,n+1}v_{1,n+1}+p_{n+1}(\frac{\alpha _{1}}{\alpha _{2}}%
v_{1,n+1}(v_{0,n+1}+v_{0,n})-v_{1,n+1}\frac{\beta _{1}-\beta _{0}}{\alpha
_{2}})\allowbreak +\allowbreak p_{n}(\frac{\alpha _{1}}{\alpha _{2}}%
(v_{1,n+1}v_{-1,n}\allowbreak +\allowbreak
v_{0,n}v_{0,n}+v_{-1,n-1}v_{1,n})\allowbreak -\allowbreak \frac{\beta
_{1}-\beta _{0}}{\alpha _{2}}v_{0,n}-\frac{\gamma _{0}}{\alpha _{2}}%
)\allowbreak +\allowbreak p_{n-1}(\frac{\alpha _{1}}{\sigma _{2}}%
v_{-1,n-1}(v_{0,n}+v_{0,n-1})-\frac{\beta _{1}-\beta _{0}}{\alpha _{2}}%
v_{-1,n-1})\allowbreak +\allowbreak \frac{\alpha _{1}}{\alpha _{2}}%
v_{-1,n-1}v_{-1,n-2}p_{n-2}$. Comparing coefficients by $p_{n+2},$ $p_{n+1},$
$p_{n},$ $p_{n-1},$ and $p_{n-2}$ we get our assertion. Now let us assume
that we deal with harness that is following assertion of Theorem \ref%
{harness} we assume that:%
\begin{eqnarray*}
\frac{\alpha _{1}(t)}{\alpha _{2}(t)} &=&\frac{1}{a+\hat{a}p(t)},~\frac{%
\gamma _{0}(t)}{\alpha _{2}(t)}=\frac{\frac{\gamma _{0}(t)}{\alpha _{1}(t)}}{%
\frac{\alpha _{2}(t)}{\alpha _{1}(t)}}=\frac{p(t)}{a+\hat{a}p(t)},\frac{%
\beta _{1}(t)-\beta _{0}(t)}{\alpha _{2}(t)}=\frac{\frac{\beta _{1}(t)-\beta
_{0}(t)}{\alpha _{1}(t)}}{\frac{\alpha _{2}(t)}{\alpha _{1}(t)}}=\frac{b+%
\hat{b}p(t)}{a+\hat{a}p(t)}, \\
v_{-1,n-1}(t) &=&c_{n-1}+\hat{c}_{n-1}p(t),~v_{0,n}(t)=b_{n}+\hat{b}%
_{n}p(t),~v_{1,n+1}(t)=a_{n+1}+\hat{a}_{n+1}p(t),
\end{eqnarray*}%
with $c_{0}=c,$ $\hat{c}_{0}=\hat{c},$ $b_{1}=b,$ $\hat{b}_{1}=\hat{b},$ $%
a_{1}=a,$ $\hat{a}_{1}=\hat{a}.$
\end{proof}

\begin{proof}
Now to prove Theorem \ref{main} we combine equations from the assertion of
Lemma \ref{aux}, simplifications from Lemma \ref{pomoc} and formulae for
coefficients $A,$ $B,$ $C,$ $D,$ $E,$ $F$ given by Lemma \ref{quadrat}. We
collect all expressions on one side of these equations and factor out (with
the help of Mathematica). It turns out that this factorization is of the
following form : Certain expression involving sequences $a_{n},$ $\hat{a}%
_{n},$ $b_{n},$ $\hat{b}_{n},$ $c_{n}$ and $\hat{c}_{n}$ times some
expressions depending on $t$. Hence to satisfy our equations expressions in
this factorization that do not depend on $t$ must be set to zero.
\end{proof}

\end{document}